\newtheorem{theorem}{Theorem}[section]
\newtheorem{lemma}[theorem]{Lemma}
\newtheorem{proposition}[theorem]{Proposition}
\newtheorem{corollary}[theorem]{Corollary}
\newtheorem*{theorem:taelmanlseriesvarphi}{Theorem~\ref{taelmanlseriesvarphi}}
\newtheorem*{theorem:entire}{Theorem~\ref{entire}}
\newtheorem*{theorem:logalgthm}{Theorem~\ref{logalgthm}}
\newtheorem*{corollary:analog}{Corollary~\ref{analog}}
\newtheorem*{lemma:partialanalog}{Lemma~\ref{partialanalog}}
\theoremstyle{definition}
\newtheorem*{definition}{Definition}
\newtheorem{example}[theorem]{Example}
\newtheorem{remark}[theorem]{Remark}
\theoremstyle{remark}
\newcommand{\ZZ}{\mathbb{Z}}
\newcommand{\RR}{\mathbb{R}}
\newcommand{\NN}{\mathbb{N}}
\newcommand{\TT}{\mathbb{T}}
\newcommand{\CC}{\mathbb{C}}
\DeclarePairedDelimiter\floor{\lfloor}{\rfloor}
\DeclareMathOperator{\Ker}{Ker}
\DeclareMathOperator{\sgn}{sgn}
\DeclareMathOperator{\GL}{GL}
\DeclareMathOperator{\Gal}{Gal}
\DeclareMathOperator{\ord}{ord}
\newcommand{\dnorm}[1]{\lVert #1 \rVert}
\newcommand{\inorm}[1]{{\lvert #1 \rvert}_{\infty}}
\begin{document}

\title[Taelman $L$-values for Drinfeld Modules over Tate Algebras]{Taelman $L$-values for Drinfeld Modules over Tate Algebras}

\author{O\u{g}uz Gezm\.{i}\c{s}}
\address{Department of Mathematics, Texas A{\&}M University, College Station,
TX 77843, U.S.A.}
\email{oguz@math.tamu.edu}
\thanks{This project was partially supported by NSF Grant DMS-1501362}

\subjclass[2010]{Primary 11M38; Secondary 11G09, 11R58}

\date{July 17, 2018}

\begin{abstract}
In the present paper, we investigate Taelman $L$-values corresponding to Drinfeld modules over Tate algebras of arbitrary rank. Using our results, we also introduce an $L$-series converging in Tate algebras which can be seen as a generalization of Pellarin $L$-series. 
\end{abstract}

\keywords{Drinfeld modules, Tate algebras, Pellarin $L$-series, Taelman $L$-values}

\maketitle

\section{Introduction}
Let $p$ be a prime number, and set $q:=p^l$ where $l$ is a positive integer. We denote the finite field with $q$ elements by $\mathbb{F}_q$. Let $\theta$ be an indeterminate over $\mathbb{F}_q$. We set $A:=\mathbb{F}_q[\theta]$ and $K:=\mathbb{F}_q(\theta)$. We define $A_{+}$ to be the set of monic polynomials in $A$, and $A_{+,k}$ to be the subset of $A_{+}$ comprising polynomials of degree $k$. We define $\ord_{\infty}$ to be the valuation on $K$ corresponding to the point at $\infty$ defined so that $\ord_{\infty}(\theta)=-1$. We denote the completion of $K$ with respect to $\ord_{\infty}$ by $\mathbb{K}_{\infty}$, and the completion of the algebraic closure of $\mathbb{K}_{\infty}$ by $\mathbb{C}_{\infty}$. We let $\inorm{\,\cdot\,}$ be the normalized $\infty$-adic norm on $\CC_\infty$ such that $\inorm{\theta} = q$. For any $\mathbb{F}_q$-algebra $R$ with an $\mathbb{F}_q$-algebra homomorphism $\tau \colon R \to R$, we define the non-commutative skew polynomial ring $R\{\tau \}$  and the formal power series ring $R\{\{\tau\}\}$ by the relation $\tau\cdot x=\tau(x) \cdot\tau$ for all $x\in R$. Finally, for any field $k$ and any finite $k[\theta]$-module $M$, we denote the monic generator of the Fitting ideal of $M$ by $[M]_{k[\theta]}$ (see \S 2.2 for the details about Fitting ideals).

A Drinfeld $A$-module $\phi$ of rank $r$ (over $A$) is an $\mathbb{F}_q$-algebra homomorphism 
\[
\phi \colon A \to A\{\tau \}
\]
defined by 
\begin{equation}\label{intro0}
\phi_{\theta}=\theta + \phi_{\theta,1}\tau + \dots + \phi_{\theta,r}\tau^r
\end{equation}
such that $\phi_{\theta,r} \neq 0$. For any integer $s$, the dual Goss $L$-series $L(\phi^{\vee},s-1)$ corresponding to $\phi$ is defined by
\[
L(\phi^{\vee},s-1) =\sum_{a\in A_{+}}\frac{\mu(a)}{a^s},
\]
where $\mu \colon A_{+} \to A$ is a multiplicative function depending on $\phi$ (eg. see \cite[Lem.3.5]{ChangEl-GuindyPapanikolas}). As an example, if $\phi$ is the Carlitz module $C$ defined by $C_{\theta}=\theta + \tau$, then $\mu(a)=1$ for all $a\in A_{+}$. In \cite{Taelman}, Taelman defined a new type of $L$-value $L(\phi,A)$ by
\[
L(\phi,A)=\prod_{f} \frac{\big[A/fA\big]_{A}}{\big[\phi(A/fA)\big]_{A}},
\] 
where the product runs over irreducible polynomials $f\in A_{+}$ (see \S 2.2 and 2.3 for the details). Moreover, Taelman  observed \cite[Rem.~5]{Taelman} that $L(\phi,A)$ is also related to the dual Goss $L$-series by 
\[
L(\phi^{\vee},0)=\sum_{a\in A_{+}}\frac{\mu(a)}{a}=L(\phi,A).
\]

We define the Tate algebra $\TT_{n}$ as the set of all elements of the form $\sum x_{{i_1},\dots,{i_{n}}}z_1^{i_1}\dots z_n^{i_n}\in \CC_{\infty}[[z_1,\dots,z_n]]$ such that $\inorm{x_{{i_1},\dots,{i_{n}}}} \to 0$ as $i_1+\dots+ i_{n}\to \infty$. For $g = \sum x_{{i_1},\dots,{i_{n}}}z_1^{i_1}\dots z_n^{i_n}  \in \TT_{n}$, the Gauss norm $\dnorm{\,\cdot\,}$ on $\TT_{n}$ is defined by $\dnorm{g} := \sup\{\inorm{x_{{i_1},\dots,i_{n}}} \mid i_j\in \NN \}$. We denote its associated valuation by $\ord$ such that $\ord(g) := \min\{ \ord_\infty(x_{{i_1},\dots,{i_{n}}}) \mid i_j \in \NN\}$. For any $f=g/h\in \CC_{\infty}(z_1,\dots,z_n)$ such that $g,h \in \CC_{\infty}[z_1,\dots,z_n]$ with $h\neq 0$, we set $\ord(f):=\ord(g)-\ord(h)$ and denote the completion of $\CC_{\infty}(z_1,\dots,z_n)$ with respect to the valuation $\ord$ by $\widetilde{\TT_{n}}$. Now set $\mathbb{A}:=\mathbb{F}_q(z_1,\dots,z_n)[\theta]$. For $1\leq k \leq n$, define $\ell_0(z_k):=1$, and for $i\geq 1$, $\ell_i(z_k):=\prod_{j=0}^{i-1}(z_k-\theta^{q^j})$. Following the work of Angl\`{e}s and Tavares Ribeiro in \cite[\S 3]{AnglesTavaresRibeiro}, we define a Drinfeld $\mathbb{A}$-module $\varphi$ of rank $r$ as an $\mathbb{F}_q(z_1,\dots,z_n)$-algebra homomorphism 
\[
\varphi \colon \mathbb{A} \to \mathbb{A}\{\tau \}
\]
given by 
\begin{equation}\label{canonical}
\varphi_{\theta}=\sum_{i=0}^{r}\varphi_{\theta,i}\tau^i=\sum_{i=0}^{r}\ell_i(z_1)\dots\ell_i(z_n)\phi_{\theta,i}\tau^i.
\end{equation}
The infinite series $\sum_{i\geq 0} \beta_i\tau^{i}\in \TT_{n}[[\tau]]$ induces the exponential function of $\varphi$
\[
\exp_{\varphi} \colon \widetilde{\TT_{n}} \to \widetilde{\TT_{n}}
\]
defined by $\exp_{\varphi}(f)=\sum_{i\geq 0} \beta_i\tau^{i}(f)$ for all $f\in \widetilde{\TT_{n}}$. Furthermore, the series $\sum_{i\geq 0} \xi_i\tau^{i}\in \TT_{n}[[\tau]]$ induces the logarithm function of $\varphi$
\[
\log_{\varphi} \colon \widetilde{\TT_{n}} \to \widetilde{\TT_{n}}
\]
defined by $\log_{\varphi}(f)=\sum_{i\geq 0} \xi_i\tau^{i}(f)$, and it satisfies $\exp_{\varphi}(\log_{\varphi}(f))=\log_{\varphi}(\exp_{\varphi}(f))=f$ for any $f\in \widetilde{\TT_{n}}$ within the radius of convergence of $\log_{\varphi}$ (see \S 3.1 for more details about coefficients $\beta_i$ and $\xi_i$). 

After Taelman $L$-values were introduced for Drinfeld $A$-modules, Angl\`{e}s, Pellarin, and Tavares Ribeiro developed the theory in \cite[\S 5]{APTR} for the deformation of the Carlitz module $C$. Later on, Angl\`{e}s and Tavares Ribeiro extended the theory for a Drinfeld $\mathbb{A}$-module $\varphi$ of arbitrary rank in \cite{AnglesTavaresRibeiro}, and defined the Taelman $L$-value corresponding to $\varphi$ by the infinite product 
\[
L(\varphi,\mathbb{A})=\prod_{f} \frac{\big[\mathbb{A}/f\mathbb{A}\big]_{\mathbb{A}}}{\big[\varphi(\mathbb{A}/f\mathbb{A})\big]_{\mathbb{A}}},
\] 
where $f$ is irreducible in $A_{+}$ (see \S 4.1 for explicit definitions). Furthermore, Angl\`{e}s, Pellarin, and Tavares Ribeiro \cite[Prop.~5.9]{APTR} proved that if $\tilde{C}$ is a Drinfeld $\mathbb{A}$-module defined by $\tilde{C}_{\theta}=\theta + (z_1-\theta)\dots (z_n-\theta)\tau$, then
\[
L(\tilde{C},\mathbb{A})=\sum_{a\in A_{+}}\frac{a(z_1)\dots a(z_n)}{a}.
\]

In the present paper, we generalize the work of Angl\`{e}s, Pellarin, and Tavares Ribeiro in \cite{APTR} to special Drinfeld $\mathbb{A}$-modules of arbitrary rank which are defined by Angl\`{e}s and Tavares Ribeiro in \cite[\S 3]{AnglesTavaresRibeiro}. Our first result is the following theorem (stated as Theorem \ref{taelmanlseriesvarphi} later). 
\begin{theorem}\label{Theorem1} Let $\phi$ be a Drinfeld $A$-module defined as in \eqref{intro0} and $\varphi$ be a Drinfeld $\mathbb{A}$-module defined as in \eqref{canonical}. Let $L(\varphi,\mathbb{A})$ be the Taelman $L$-value corresponding to $\varphi$. Then
\[ 
L(\varphi,\mathbb{A})=\sum_{a\in A_{+}}\frac{\mu(a)a(z_1)\dots a(z_n)}{a}.
\] 
\end{theorem}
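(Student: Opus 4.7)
The plan is to establish Theorem~1 by matching local Euler factors. By the definition of the Taelman $L$-value, $L(\varphi,\mathbb{A}) = \prod_f[\mathbb{A}/f\mathbb{A}]_\mathbb{A}/[\varphi(\mathbb{A}/f\mathbb{A})]_\mathbb{A}$, where $f$ runs through monic irreducibles in $A_+$. On the other hand, the multiplicativity of $\mu$ together with the ring-homomorphism property of evaluation $a\mapsto a(z_j)$ yields, via unique factorization in $A_+$, the formal Euler factorization
\[
\sum_{a\in A_+}\frac{\mu(a)a(z_1)\cdots a(z_n)}{a} = \prod_f\sum_{k\geq 0}\frac{\mu(f^k)f(z_1)^k\cdots f(z_n)^k}{f^k}.
\]
Setting $\beta := f(z_1)\cdots f(z_n)$, the theorem reduces to the local identity $[\mathbb{A}/f\mathbb{A}]_\mathbb{A}/[\varphi(\mathbb{A}/f\mathbb{A})]_\mathbb{A} = \sum_{k\geq 0}\mu(f^k)\beta^k/f^k$ for every monic irreducible $f$.

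For this, I would analyze $[\varphi(\mathbb{A}/f\mathbb{A})]_\mathbb{A}$ as a characteristic polynomial. Let $d := \deg f$ and $\mathcal{K} := \mathbb{F}_q(z_1,\ldots,z_n)$; identify $\mathbb{A}/f\mathbb{A} = \mathcal{K}\otimes_{\mathbb{F}_q}(A/fA)$, a $d$-dimensional $\mathcal{K}$-vector space. Since $\tau$ fixes $\mathcal{K}$ (as required for $\varphi$ to be a $\mathcal{K}$-algebra homomorphism) and sends $\theta\mapsto\theta^q$, the operator $\varphi_\theta$ acts $\mathcal{K}$-linearly on $\mathbb{A}/f\mathbb{A}$, and $[\varphi(\mathbb{A}/f\mathbb{A})]_\mathbb{A} = \det_\mathcal{K}(\theta\cdot\mathrm{Id}-\varphi_\theta)$. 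The key algebraic input is $\ell_d(z_j)\equiv f(z_j)\pmod f$: since $\ell_d(z_j) = \prod_{s=0}^{d-1}(z_j-\theta^{q^s})$ and the images of $\theta^{q^0},\ldots,\theta^{q^{d-1}}$ in $A/fA = \mathbb{F}_{q^d}$ are precisely the $d$ roots of $f$, this product reduces to $f(z_j)$ modulo $f$; iterating gives $\ell_i(z_j)\equiv f(z_j)^{\lfloor i/d\rfloor}\ell_{i\bmod d}(z_j)\pmod f$. Combined with $\tau^d = \mathrm{Id}$ on $\mathbb{A}/f\mathbb{A}$, one reduces $\varphi_\theta\bmod f$ to an operator of $\tau$-degree less than $d$ in which each elimination of $\tau^d$ contributes a power of $\beta$.

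Taelman's identity for $\phi$ provides $f/[\phi(A/fA)]_A = \sum_k\mu(f^k)/f^k$. The parallel computation resulting from the above reduction should yield $f/[\varphi(\mathbb{A}/f\mathbb{A})]_\mathbb{A} = \sum_k\mu(f^k)\beta^k/f^k$, which is exactly the required local factor; substituting this into the Euler product of the first paragraph then completes the proof. The main obstacle is the explicit characteristic-polynomial calculation establishing this latter identity, because for $0<i<d$ the multiplication operators by $\ell_i(z_j)$ are not scalars in $\mathbb{A}/f\mathbb{A}$ and interact nontrivially with the Frobenius $\tau$. Conceptually the mechanism is clean: setting $\Omega := \omega(z_1)\cdots\omega(z_n)$ with $\omega$ the Anderson--Thakur--Pellarin element satisfying $\tau(\omega) = (z-\theta)\omega$, formal conjugation by $\Omega$ rescales each $\tau^i$ by $\ell_i(z_1)\cdots\ell_i(z_n)$, so that $\varphi_\theta = \Omega\phi_\theta\Omega^{-1}$ and this conjugation implements precisely the substitution $1/f\mapsto\beta/f$ on the local Euler factor. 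Making this heuristic rigorous---given that $\Omega$ does not lie in $\mathbb{A}$ itself---is the technical heart of the proof.
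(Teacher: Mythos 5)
Your reduction to the local identity $[\mathbb{A}/f\mathbb{A}]_{\mathbb{A}}/[\varphi(\mathbb{A}/f\mathbb{A})]_{\mathbb{A}}=\sum_{k\geq 0}\mu(f^k)\bigl(f(z_1)\cdots f(z_n)\bigr)^k f^{-k}$ is exactly the reduction the paper makes: via the generating series \eqref{generating} and the multiplicativity of $\mu$, it is equivalent to the closed form $[\varphi(\mathbb{A}/f\mathbb{A})]_{\mathbb{A}}=f+c(f)p_1\prod_k f(z_k)+\cdots+c(f)\prod_k f(z_k)^{r_0}$ of Proposition \ref{fittingidealofcanonical}. Your key observation that $\ell_d(z_j)\equiv f(z_j)\pmod f$, because the images of $\theta^{q^0},\dots,\theta^{q^{d-1}}$ in $A/fA$ are the roots of $f$, is likewise the mechanism \eqref{productwrtbasis} that drives the paper's computation. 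But that computation \emph{is} the content of the theorem, and you have not carried it out: ``the parallel computation\dots should yield'' and ``the main obstacle is the explicit characteristic-polynomial calculation'' are acknowledgements of the gap, not a proof.

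Concretely, two things are missing. First, your proposed reduction of $\varphi_\theta\bmod f$ to $\tau$-degree $<d$ by eliminating $\tau^d$ does not by itself identify the coefficients of the resulting powers of $f(z_1)\cdots f(z_n)$ with $c(f)p_i$, the coefficients of the characteristic polynomial of Frobenius on the Tate module; that identification is the hard part. The paper obtains it in two steps: it first computes the Fitting ideal for the one-variable $z^m$-deformation $\tilde{\phi}_\theta=\sum_i z^{mi}\phi_{\theta,i}\tau^i$ (Proposition \ref{fittingidealzdeformation}) by an annihilator argument resting on the congruences $\phi_{f,j}\equiv\sum_i\phi_{-c(f)p_i,\,j-id}\pmod f$ from \cite{ChangEl-GuindyPapanikolas}, and then transfers the answer to $\varphi$ by diagonalizing the $\theta$-action over $\overline{\mathbb{F}}_q(z_1,\dots,z_n)$ and expanding $\det(X-\varphi_\theta)$ by the Leibniz formula, where each cycle of a permutation contributes exactly a power of $\prod_k f(z_k)$ and the coefficients are matched against the deformation. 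Your ``Taelman's identity for $\phi$'' input (i.e.\ Gekeler's $[\phi(A/fA)]_A=c(f)P_\phi(1)$) enters the argument only through this route. Second, the $\Omega$-conjugation heuristic, while correct in spirit (it is the identity $\tau^i(\omega_n)=\ell_i(z_1)\cdots\ell_i(z_n)\omega_n$ that the paper uses later in \eqref{partofproof}), cannot be applied modulo $f$ since $\omega_n$ does not lie in $\mathbb{A}$, as you note yourself. So the proposal is a correct plan whose central lemma is left unproved.
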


In order to prove Theorem \ref{Theorem1}, we need to analyze $[\varphi(\mathbb{A}/f\mathbb{A})]_{\mathbb{A}}$ for all monic irreducible polynomials $f\in A$. For a Drinfeld $A$-module of arbitrary rank, this was done by using Tate modules (see \cite{ChangEl-GuindyPapanikolas},\cite{Gekeler}). The main difficulty in our case is that we do not yet understand Tate modules for Drinfeld $\mathbb{A}$-modules. Therefore, to prove the theorem, we follow a different direction, and consider Drinfeld $\mathbb{F}_q(z)[\theta]$-modules ($z$ an indeterminate over $\mathbb{F}_q$) which were first introduced by Angl\`{e}s and Tavares Ribeiro in \cite[\S 2.2]{AnglesTavaresRibeiro} (see \S 2.4).

Let $\omega_{n}\in \TT_n^{\times}$ be an Anderson-Thakur type element (see \S 2.1 for the definition). Angl\`{e}s and Tavares Ribeiro \cite[\S 3.2]{AnglesTavaresRibeiro} proved that $\exp_{\varphi}(L(\varphi,\mathbb{A})) \in A[z_1,\dots,z_n]$. Combining this result with Theorem \ref{Theorem1} and by a small calculation (see Remark \ref{calculation}), we deduce the following corollary.
\begin{corollary}\label{Cor1} Let us set
\[
P_{\phi}(z_1,\dots,z_n):=\exp_{\varphi}\bigg(\sum_{a\in A_{+}} \frac{\mu(a)a(z_1)\dots a(z_n)}{a}\bigg).
\]
Then $P_{\phi}(z_1,\dots,z_n)\in A[z_1,\dots,z_n]$. If $P_{\phi}(z_1,\dots,z_n)$ is within the radius of convergence of $\log_{\varphi}$, then
\[
\sum_{a\in A_{+}} \frac{\mu(a)a(z_1)\dots a(z_n)}{a}=\log_{\varphi}(P(z_1,\dots,z_n))=\frac{\log_{\phi}(\omega_{n}P_{\phi}(z_1,\dots,z_n))}{\omega_{n}}.
\]
\end{corollary}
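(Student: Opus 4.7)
The plan is to derive the corollary from Theorem~\ref{Theorem1} together with the result of Angl\`{e}s and Tavares Ribeiro recalled just before the statement, followed by a small intertwining calculation relating $\exp_{\varphi}$ and $\exp_{\phi}$ via the Anderson--Thakur element $\omega_{n}$.

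First, Theorem~\ref{Theorem1} identifies
\[
L(\varphi,\mathbb{A})=\sum_{a\in A_{+}}\frac{\mu(a)a(z_1)\cdots a(z_n)}{a},
\]
so that $P_{\phi}(z_1,\dots,z_n)=\exp_{\varphi}(L(\varphi,\mathbb{A}))$. The cited result of Angl\`{e}s and Tavares Ribeiro \cite[\S 3.2]{AnglesTavaresRibeiro} then gives $P_{\phi}(z_1,\dots,z_n)\in A[z_1,\dots,z_n]$, which settles the integrality assertion. Under the convergence hypothesis on $P_{\phi}$, applying $\log_{\varphi}$ and using the relation $\log_{\varphi}\circ\exp_{\varphi}=\mathrm{id}$ within the radius of convergence yields the middle equality
\[
\log_{\varphi}(P_{\phi}(z_1,\dots,z_n))=L(\varphi,\mathbb{A})=\sum_{a\in A_{+}}\frac{\mu(a)a(z_1)\cdots a(z_n)}{a}.
\]

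For the final equality, the key point is an intertwining relation $\phi_{\theta}\cdot\omega_{n}=\omega_{n}\cdot\varphi_{\theta}$ in $\TT_{n}\{\tau\}$. To establish it, I would use the property $\tau^{i}(\omega_{n})=\ell_i(z_1)\cdots\ell_i(z_n)\,\omega_{n}$ of the Anderson--Thakur element (itself a consequence of $\tau(\omega(z_k))=(z_k-\theta)\omega(z_k)$ applied coordinatewise), combined with the expression~\eqref{canonical} defining $\varphi_{\theta}$ from the coefficients of $\phi_{\theta}$. By uniqueness of the exponential as the unique series with constant term $1$ satisfying $\exp\cdot\varphi_{\theta}=\theta\cdot\exp$, this intertwining propagates to $\exp_{\phi}\cdot\omega_{n}=\omega_{n}\cdot\exp_{\varphi}$ in $\TT_{n}\{\{\tau\}\}$. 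Inverting termwise gives $\omega_{n}\log_{\varphi}(u)=\log_{\phi}(\omega_{n}u)$ whenever both sides are defined; applied to $u=P_{\phi}$, this produces the right-hand equality.

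I do not anticipate a significant obstacle beyond the intertwining computation itself. The only convergence subtlety is to verify that $P_{\phi}$ lying in the domain of $\log_{\varphi}$ forces $\omega_{n}P_{\phi}$ into the domain of $\log_{\phi}$, but this follows transparently from the termwise comparison of coefficients of the two log series imposed by the intertwining, together with the fact that $\omega_{n}\in\TT_{n}^{\times}$.
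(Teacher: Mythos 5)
Your proposal is correct and follows essentially the same route as the paper: Theorem~\ref{taelmanlseriesvarphi} together with the Angl\`{e}s--Tavares Ribeiro integrality result gives the first two claims, and the final equality is exactly the paper's Remark~\ref{calculation}, namely the identity $\tau^i(\omega_n)=\ell_i(z_1)\cdots\ell_i(z_n)\omega_n$ applied term by term to $\log_{\phi}(\omega_n P_{\phi})$ --- your intertwining $\phi_{\theta}\cdot\omega_n=\omega_n\cdot\varphi_{\theta}$ is a repackaging of that same computation. One small slip: the defining relation of the exponential is $\exp_{\varphi}\cdot\,\theta=\varphi_{\theta}\cdot\exp_{\varphi}$, not $\exp_{\varphi}\cdot\,\varphi_{\theta}=\theta\cdot\exp_{\varphi}$ (the latter is the relation characterizing the logarithm), but this does not affect the argument.
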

As an immediate consequence of \cite[Rem.~5.13]{APTR}, Angl\`{e}s, Pellarin, and Tavares Ribeiro proved that 
if $0\leq n \leq q-1$, then 
\begin{equation}\label{intro1}
L(\tilde{C},\mathbb{A})=\sum_{a\in A_{+}}\frac{a(z_1)\dots a(z_n)}{a}=\frac{\log_{C}(\omega_n)}{\omega_{n}}.
\end{equation}

In the following result (stated as Corollary \ref{analog} later), we generalize the identity in \eqref{intro1} by analyzing when the polynomial $P_{\phi}(z_1,\dots,z_n)$ in Corollary \ref{Cor1} is equal to 1, and for which Drinfeld $\mathbb{A}$-modules $\varphi$ as in \eqref{canonical}, 1 is within the radius of convergence of $\log_{\varphi}$. Before stating the following result, we set $\beta:=\max\{\deg_{\theta}(\phi_{\theta,i}) \mid 1\leq i \leq r\}$.
\begin{corollary}\label{introcor}
Let $\phi$ be a Drinfeld $A$-module defined as in \eqref{intro0}, and $\varphi$ be a Drinfeld $\mathbb{A}$-module defined as in \eqref{canonical}. If $0\leq n \leq q/r -(1+2\beta)$, then
\begin{equation}
L(\varphi,\mathbb{A})=\sum\limits_{a\in A_{+}}\frac{\mu(a)a(z_1)\dots a(z_n)}{a}=\frac{\log_{\phi}(\omega_n)}{\omega_n}.
\end{equation}
\end{corollary}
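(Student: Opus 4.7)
By Corollary \ref{Cor1}, the identity $L(\varphi,\mathbb{A}) = \log_\phi(\omega_n)/\omega_n$ will follow once we verify (a) $P_\phi(z_1,\dots,z_n) = 1$ and (b) $1 \in \TT_{n}$ lies within the radius of convergence of $\log_\varphi$. My plan is to establish both by Gauss-norm estimates, with the hypothesis $n \leq q/r - (1+2\beta)$ entering as the precise balance point.

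For (a), set $L := L(\varphi,\mathbb{A})$ and exploit that any nonzero element of $A[z_1,\dots,z_n]$ has Gauss norm at least $1$: it then suffices to show $\dnorm{P_\phi - 1} < 1$, for this forces $P_\phi - 1 \in A[z_1,\dots,z_n]$ to vanish. Expanding
\[
P_\phi - 1 = (L-1) + \sum_{i \geq 1}\beta_i L^{q^i},
\]
I would estimate two contributions separately. Using Theorem \ref{Theorem1} to write $L - 1 = \sum_{a \in A_{+},\, a \neq 1}\mu(a)a(z_1)\cdots a(z_n)/a$, and noting $\dnorm{a(z_j)} \leq 1$ together with $\inorm{a} = q^{\deg a}$, the input needed is a sharp bound on $\inorm{\mu(a)}$ arising from the Euler factor structure of the dual Goss $L$-series; the growth rate is controlled by $r$ and $\beta$. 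Second, the coefficients $\beta_i$ of $\exp_\varphi$ are estimated from the recursion induced by $\varphi_\theta \cdot \exp_\varphi = \exp_\varphi \cdot \theta$, together with $\dnorm{\varphi_{\theta,j}} \leq q^{\beta + n(q^j-1)/(q-1)}$ (from $\varphi_{\theta,j} = \ell_j(z_1)\cdots\ell_j(z_n)\phi_{\theta,j}$). Under the hypothesis, these combine to yield $\dnorm{P_\phi - 1} < 1$.

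For (b), writing $\log_\varphi = \sum_{i \geq 0} \xi_i \tau^i$, I would derive from $\log_\varphi \cdot \varphi_\theta = \theta \cdot \log_\varphi$ the recursion
\[
\xi_k = \frac{1}{\theta - \theta^{q^k}}\sum_{j=1}^{k}\xi_{k-j}\varphi_{\theta,j}^{q^{k-j}},
\]
with $\xi_0 = 1$. Combining $\dnorm{\theta - \theta^{q^k}} = q^{q^k}$ with the norm bounds on $\varphi_{\theta,j}$ above, an induction on $k$ should give $\dnorm{\xi_k} \to 0$ under the same hypothesis $n \leq q/r - (1+2\beta)$. Since $\tau^i(1) = 1$, this yields convergence of $\log_\varphi(1) = \sum_{i \geq 0}\xi_i$ in $\TT_{n}$.

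The main obstacle will be obtaining the norm bound on $\mu(a)$ sharply enough to push through (a). One needs $\inorm{\mu(a)}$ to grow slowly relative to $\inorm{a}$ in a manner consistent with rank-$r$ Riemann hypothesis heuristics for Drinfeld modules, and the explicit numerical threshold $n \leq q/r - (1+2\beta)$ should emerge naturally by balancing this growth against the contribution of the $n$ Tate variables $z_1,\dots,z_n$ and the $\beta$-dependent bounds on the coefficients of $\varphi_\theta$.
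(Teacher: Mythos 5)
Your proposal follows essentially the same route as the paper: there one shows $\exp_{\psi}(L(\psi,\tilde{\mathbb{A}}))=1$ by combining the integrality statement of Proposition \ref{Prop5} with a Gauss-norm estimate $\ord(\exp_{\psi}(L)-1)\geq 1$ (using Lemma \ref{mu}(b) for $\mu(a)$ and the El-Guindy--Papanikolas bound for the exponential coefficients in place of your proposed recursion), then verifies via Proposition \ref{radiusofconv} that $1$ lies in the domain of the logarithm, and finishes with the computation $\tau^i(\omega_n)=\ell_i(z_1)\cdots\ell_i(z_n)\omega_n$ packaged in Corollary \ref{Cor1}. The only slip is notational: $\exp_{\varphi}(L)=\sum_i\beta_i\tau^i(L)$ with $\tau^i(L)\neq L^{q^i}$ since $\tau$ fixes the $z_j$, but because $\dnorm{\tau^i(L)}=\dnorm{L}^{q^i}$ your estimates are unaffected.
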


The importance of Corollary \ref{introcor} is that it links $\omega_n$, which can be defined in terms of the exponential function of the Carlitz module $C$ (see Example \ref{omegaotherdef}), and the logarithm function of a Drinfeld $A$-module $\phi$ of an arbitrary rank. In other words, the Taelman $L$-value reveals a non-trivial link between Drinfeld $A$-modules over $A$ of arbitrary rank and the Carlitz module.

The reader will no doubt observe that the special value in Theorem \ref{Theorem1} is also the value of a Pellarin $L$-series. In \cite{Pellarin}, Pellarin introduced the following $L$-series
\[
L(z_1,\dots,z_n,s)=\sum_{d\geq 0}\sum_{a\in A_{+,d}}a(z_1)\dots a(z_n)a^{-s}
\]
for any integer $s>0$ which is called \textit{Pellarin $L$-series}. Now, for any $a\in A_{+}$, set $\langle a \rangle := a\theta^{-\deg_{\theta}(a)}$. In \cite{Goss2}, Goss defined another type of Pellarin $L$-series
\begin{equation}\label{Plseries1}
L(z_1,\dots,z_n;x,y)=\sum_{d\geq 0}x^{-d}\sum\limits_{a\in A_{+,d}} a(z_1)\dots a(z_n)\langle a \rangle^y
\end{equation}
for any $(x,y)\in \CC_{\infty}^{\times} \times \ZZ_{p}:=\mathbb{S}_{\infty}$ where we call $\mathbb{S}_{\infty}$ Goss' upper half plane (see \S 4 for details). One can observe that $L( z_1\dots,z_n,s)=L(z_1,\dots,z_n;\theta^s,-s)$. Moreover, when $n=1$, Goss proved \cite[Thm. 1]{Goss2} that $L(z_1,\dots,z_n;x,y)$ is entire on $\CC_{\infty}\times \mathbb{S}_{\infty}$, and Angl\`{e}s and Pellarin \cite[Prop. 6]{AnglesPellarin} proved the same result for arbitrary $n$.

Let $\phi$ be a Drinfeld $A$-module as in \eqref{intro0}. We define 
\begin{equation}\label{Lseries2}
L(\phi^{\vee},z_1,\dots,z_n;x,y)=\sum_{d\geq 0}x^{-d}\sum\limits_{a\in A_{+,d}} \mu(a)a(z_1)\dots a(z_n)\langle a \rangle^y
\end{equation}
for any $(x,y)\in \mathbb{S}_{\infty}$. We further set 
\[
L(\phi^{\vee}, z_1\dots,z_n,s):=L(\phi^{\vee},z_1,\dots,z_n;\theta^s,-s)=\sum_{a\in A_{+}}\frac{\mu(a)a(z_1)\dots a(z_n)}{a^s}.
\]
for any integer $s>0$. Note that $L(C^{\vee},z_1\dots,z_n,s)=L(z_1,\dots,z_n,s)$, and by Theorem \ref{taelmanlseriesvarphi}, we have $L(\phi^{\vee},z_1\dots,z_n,1)=L(\varphi,\mathbb{A})$. 
 
As a generalization of the results mentioned above for Pellarin $L$-series in \eqref{Plseries1}, we prove the following theorem (stated as Theorem \ref{entire} later).
\begin{theorem} \label{introentire} The infinite series $L(\phi^{\vee},z_1,\dots,z_n;x,y)$ can be analytically continued to an entire function on $\mathbb{C}^n_{\infty} \times \mathbb{S}_{\infty}$.
\end{theorem}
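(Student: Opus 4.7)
The plan is to adapt the entireness proof of Angl\`{e}s--Pellarin for the classical Pellarin $L$-series (the $\mu\equiv 1$ case \cite[Prop.~6]{AnglesPellarin}) to accommodate the multiplicative weight $\mu$ attached to $\phi$. Setting
\[
S_d(z_1,\dots,z_n;y) := \sum_{a\in A_{+,d}} \mu(a)\, a(z_1)\cdots a(z_n)\,\langle a \rangle^y,
\]
the desired analytic continuation follows once we show that, on every compact subset of $\CC^n_{\infty} \times \ZZ_p$, the Gauss norms $\dnorm{S_d}$ decay faster than any geometric rate in $d$, so that $\sum_{d\geq 0} x^{-d} S_d$ converges for every $x\in\CC_{\infty}^{\times}$.

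To handle the $\langle a \rangle^y$ factor, I would invoke the $p$-adic Mahler expansion
\[
\langle a \rangle^y = \sum_{k\geq 0} \binom{y}{k}(\langle a \rangle - 1)^k = \sum_{k\geq 0} \binom{y}{k}\theta^{-dk}(a-\theta^d)^k,
\]
which is valid since $\inorm{\langle a \rangle - 1}\leq q^{-1}$, and in which $\inorm{\binom{y}{k}}\leq 1$ uniformly in $y\in\ZZ_p$. After interchanging summations, the problem reduces to bounding, uniformly in $d$ and $k$, the polynomial moments
\[
T_{d,k}(z_1,\dots,z_n):=\theta^{-dk}\sum_{a\in A_{+,d}}\mu(a)\,a(z_1)\cdots a(z_n)(a-\theta^d)^k.
\]
For the $\mu$-twist I would use the polynomial growth bound $\inorm{\mu(a)} = O(\inorm{a}^{r})$ that follows from the characteristic polynomial of Frobenius on the Tate module of $\phi$ (cf.\ \cite{Gekeler, ChangEl-GuindyPapanikolas}), combined with the multiplicativity of $\mu$: its Dirichlet series factors as an Euler product with local factors of degree at most $r$, so $T_{d,k}$ can be written as a finite convolutional sum of unweighted Pellarin moments $\sum_{a\in A_{+,d'}}a(z_1)\cdots a(z_n)(a-\theta^{d'})^{k}$. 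The bound of \cite[Prop.~6]{AnglesPellarin} applied to the latter absorbs the polynomial $\mu$-contribution into the super-geometric decay, which after resumming with the weights $x^{-d}\binom{y}{k}$ gives absolute convergence on every compact subset of $\CC^n_{\infty} \times \mathbb{S}_{\infty}$.

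The main obstacle I anticipate is this last step: the explicit combinatorial vanishing of power sums over $A_{+,d}$ exploited in the Carlitz case does not survive $\mu$-weighting transparently, so the Dirichlet-convolution reduction must be carried out carefully to prevent the loss from each Euler factor from destroying the super-geometric decay. A natural alternative, should that prove too lossy, is to reinterpret $L(\phi^{\vee}, z_1,\dots,z_n; x, y)$ as a Goss-type $L$-series attached to a higher-rank $t$-module built from $\phi$ together with the variables $z_1,\dots,z_n$, for which an analogue of the Angl\`{e}s--Pellarin estimate applies directly; the remainder is then essentially Gauss-norm bookkeeping in $\TT_n$.
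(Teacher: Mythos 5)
Your overall strategy (reduce entireness to super-geometric decay of the degree-$d$ blocks, handle $\langle a\rangle^y$ by $p$-adic approximation with uniformly bounded binomial coefficients, and invoke a Goss-type criterion) matches the paper's proof in outline, and your instinct that the crux is whether the Carlitz-case power-sum vanishing survives the $\mu$-twist is exactly right. But the step you propose to resolve that crux does not work. The Euler factors of $L(\phi^{\vee},\cdot)$ are $D_f^{\phi}(f^{-s})^{-1}$, whose reciprocals are genuine infinite power series in $f^{-s}$ (of degree $r_0\geq 1$ in the denominator), so $\mu$ is not a finite Dirichlet convolution of the constant function $\mathbf{1}$ with anything finitely supported, and the ratio $\prod_f(1-f^{-s})/D_f^{\phi}(f^{-s})$ is again an infinite Euler product. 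Even granting a factorization $\mu=\nu\ast\mathbf{1}$ for a suitable multiplicative $\nu$, the quantities to be controlled are $\sum_{a\in A_{+,d}}\mu(a)a(z_1)\cdots a(z_n)(a-\theta^d)^k$, and neither the factor $(a-\theta^d)^k$ nor the constraint $\deg_{\theta}a=d$ decomposes along $a=bc$; the inner sums are therefore not unweighted Pellarin moments, and the reduction to \cite[Prop.~6]{AnglesPellarin} collapses. A size bound on $\mu$ cannot substitute for this either (and the correct bound is $\inorm{\mu(a)}\leq\inorm{a}^{1-1/r}$ from Lemma \ref{mu}(b), not $O(\inorm{a}^{r})$): the super-geometric decay in the untwisted case comes from the \emph{exact vanishing} of $\sum_{a\in A_{+,d}}a(z_1)\cdots a(z_N)$ once $d$ exceeds a threshold growing with $N$, applied with $N=n+\ell_q(y_m)$ for $p$-adic truncations $y_m$ of $y$; one needs the same exact vanishing for the $\mu$-twisted sums at a comparable threshold, and no norm estimate on $\mu$ produces cancellation.

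The missing ingredient is precisely Lemma \ref{partialanalog}: $\sum_{a\in A_{+,k}}\mu(a)a(z_1)\cdots a(z_{n-1})=0$ for $k>r(n+\beta)/(q-1)$. The paper obtains this not by any convolution device but from the multivariable log-algebraicity theorem (Theorem \ref{logalgthm}): the series $\exp_{\phi}\bigl(\sum_{a}\mu(a)C_a(X_1)\cdots C_a(X_n)a^{-1}w^{q^{\deg_{\theta}(a)}}\bigr)$ lies in $A[X_1,\dots,X_n,w]$ (this rests on Theorem \ref{canonicaldeformationTaelman} and the integrality result of Angl\`{e}s--Tavares Ribeiro recorded as Proposition \ref{Prop5}), Gauss-norm estimates with respect to $\dnorm{\,\cdot\,}_{\tilde{X}}$ force its coefficients $Z^{\phi}_k$ to vanish beyond the stated threshold (Proposition \ref{vanishing}), and reducing modulo $X_n^q$ together with the injectivity statement of Lemma \ref{action} extracts the vanishing of the twisted power sums. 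Without this ingredient, or an equivalent replacement, your argument yields at best convergence for $\inorm{x}$ large rather than entireness on all of $\CC_{\infty}^{n}\times\mathbb{S}_{\infty}$. Your fallback suggestion of recasting the series as a Goss $L$-series of a higher-rank $t$-module is not developed enough to assess, but it would face the same need for a twisted vanishing statement.
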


One of the reasons to prove Theorem \ref{entire} is that the process of the proof provides us some remarkable results about Drinfeld $A$-modules over $A$ of arbitrary rank. Our method of proving Theorem \ref{entire} relies on a multivariable version of the log-algebraicity theorem for Drinfeld $A$-modules over $A$ of arbitrary rank which can be stated as follows (later stated as Theorem \ref{logalgthm}).
\begin{theorem}\label{intrologalgthm} Let $\phi$ be a Drinfeld $A$-module as in \eqref{intro0}, and let $X_1,\dots, X_n,w$ be indeterminates. The infinite series
\[\exp_{\phi}\bigg(\sum_{a\in A_{+}}\frac{\mu(a)C_a(X_1)\dots C_a(X_n)}{a}w^{q^{\deg_{\theta}(a)}}\bigg) \in K[X_1,\dots,X_n][[w]]
\]
is actually in $A[X_1,\dots,X_n,w]$.
\end{theorem}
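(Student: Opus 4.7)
I adapt Anderson's log-algebraicity strategy, as generalized in \cite{APTR} and \cite{AnglesTavaresRibeiro}, to the present higher-rank, multi-variable setting. Set
\[
S(\underline{X},w) := \sum_{a\in A_{+}} \frac{\mu(a)\,C_a(X_1)\cdots C_a(X_n)}{a}\,w^{q^{\deg_{\theta}(a)}} \in K[X_1,\ldots,X_n][[w]],
\]
and $E(\underline{X},w) := \exp_\phi(S(\underline{X},w))$. Because $\tau^j$ sends $w^{q^d}\mapsto w^{q^{d+j}}$ and $C_a(X_i)$ has $X_i$-degree $q^{\deg_{\theta}(a)}$, the series $E$ is supported only on powers $w^{q^k}$, with coefficient $e_k\in K[X_1,\ldots,X_n]$ of $X_i$-degree at most $q^k$. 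The theorem reduces to showing (i) $e_k\in A[X_1,\ldots,X_n]$ for every $k$, and (ii) $e_k=0$ for $k$ sufficiently large.

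For (i), fix a monic irreducible $p\in A_{+}$ and establish $p$-integrality of $e_k$. Decompose $S$ by the $p$-adic valuation of $a$, writing $a=p^e b$ with $\gcd(p,b)=1$, and exploit the multiplicativity $\mu(p^e b)=\mu(p^e)\mu(b)$ together with the composition $C_a=C_{p^e}\circ C_b$ applied coordinate-wise in each $X_i$. Combining the intertwining $\exp_\phi\circ\phi_{p^e}=\phi_{p^e}\circ\exp_\phi$ with control on the Newton polygon of $\exp_\phi$ at $p$, the $p^e$ in the denominator is absorbed into the $p$-adic convergence of $\exp_\phi$, producing $p$-integral contributions. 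Running over all $p$ yields (i). For (ii), apply a Gauss-norm estimate on $\TT_n$: the rapid growth of $\ord_\infty(\beta_j)$ coming from the convergence of $\exp_\phi$ on $\CC_\infty$, combined with uniform bounds on the Gauss norms of the coefficients of $S$, gives $\dnorm{e_k}\to 0$ as $k\to\infty$; together with (i) and the $X_i$-degree bound $q^k$, this forces $e_k=0$ for $k$ large.

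\textbf{Main obstacle.} The integrality step (i) is the principal difficulty. Even in the Carlitz case $(\mu\equiv 1,\,r=1)$ treated by Anderson, the cancellation of $p$-adic denominators is already subtle; here the local factor $\mu(p^e)$ encodes the reduction of $\phi$ modulo $p$, so tracking the cancellations in $\sum_e \mu(p^e)/p^e$ against the Newton polygon of $\exp_\phi$ requires precise information about $\phi\bmod p$. Should this direct $p$-adic analysis prove intractable, an alternative route is to specialize the $X_i$ to suitable $\phi$-torsion points—so that $C_a(X_i)$ reduces to $a(\zeta_i)$-type expressions via the associated Anderson generating function—and then invoke Theorem~\ref{taelmanlseriesvarphi} together with the Angl\`{e}s--Tavares Ribeiro result $\exp_\varphi(L(\varphi,\mathbb{A}))\in A[z_1,\ldots,z_n]$ to recover the identity on the torsion locus, with the general case following by a Zariski-density/continuity argument in $\TT_n$.
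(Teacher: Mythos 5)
Your plan diverges from the paper's actual proof, and in its present form it has genuine gaps at both of its main steps. The paper does not attempt the direct Anderson-style $p$-adic analysis at all: it works in the module $\mathbb{D}_n$ of Angl\`es--Pellarin--Tavares Ribeiro, where the variables $z_1,\dots,z_n,t$ act on monomials $\tau^m(Y_j)$, $\tau^m(W)$ via the Carlitz action, proves the operator identity $\exp_{\phi}(\mathcal{E}_{\phi}(Y_1,\dots,Y_n,W))=\exp_{\psi}(L(\psi,\tilde{\mathbb{A}}))\cdot (Y_1\cdots Y_nW)$, and then imports the integrality statement $\exp_{\psi}(L(\psi,\tilde{\mathbb{A}}))\in A[z_1,\dots,z_n,t]$ (Proposition~\ref{Prop5}, resting on Theorem~\ref{canonicaldeformationTaelman} and the class-formula machinery of Angl\`es--Tavares Ribeiro/Demeslay). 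Integrality \emph{and} polynomiality in $w$ then both fall out at once. Your step (i) is exactly the part this strategy is designed to avoid: for higher rank the cancellation of $\sum_e \mu(p^e)/p^e$ against the Newton polygon of $\exp_{\phi}$ at $p$ is the entire content of the Chang--El-Guindy--Papanikolas argument (already delicate for $n=1$), and you do not carry it out; as written, (i) is a restatement of the difficulty rather than a proof.

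Step (ii) has a concrete error: the plain Gauss norm does not decay. The coefficient of $X^{q^i}$ in $C_a(X)$ for $\deg_{\theta}(a)=d$ has $\theta$-degree $q^i(d-i)$, so $\dnorm{C_a(X_1)\cdots C_a(X_n)/a}$ (with $\dnorm{X_i}=1$) grows without bound in $d$, and no ``uniform bound on the Gauss norms of the coefficients of $S$'' is available. The vanishing argument (Proposition~\ref{vanishing} in the paper) requires instead the sup-norm $\dnorm{\,\cdot\,}_{\tilde{X}}$ over $\exp_C(K_\infty\tilde{\pi})$, with respect to which the Carlitz action $G\mapsto G(C_a(X_1),\dots,C_a(X_n))$ is an \emph{isometry}, plus the fact that a nonzero element of the relevant $A$-submodule has $\dnorm{\,\cdot\,}_{\tilde{X}}\geq 1$. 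Finally, your fallback route (specialize at torsion points and invoke $\exp_{\varphi}(L(\varphi,\mathbb{A}))\in A[z_1,\dots,z_n]$) is the right idea in spirit but is missing the $t$-deformation: without passing to the Drinfeld $\tilde{\mathbb{A}}$-module $\psi$ and $L(\psi,\tilde{\mathbb{A}})=\sum_a \mu(a)a(z_1)\cdots a(z_n)t^{\deg_{\theta}(a)}/a$, you only control the value of the series at $w=1$, i.e.\ the sum $\sum_k e_k$, and cannot separate the individual coefficients $e_k$ of $w^{q^k}$. The variable $t$ (paired with $W$ via $t\cdot\tau^m(W)=\tau^{m+1}(W)$) is precisely what keeps track of $\deg_{\theta}(a)$ and makes the transfer to the $w$-graded statement possible.
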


The proof of Theorem \ref{intrologalgthm} depends on the method of Angl\`{e}s, Pellarin, and Tavares Ribeiro given in \cite[\S 8]{APTR} to prove the multivariable log-algebraicity theorem for the Carlitz module $C$. We also remark that the one variable version of Theorem \ref{intrologalgthm} was proved by Chang, El-Guindy, and Papanikolas in \cite{ChangEl-GuindyPapanikolas} using the method of Anderson in \cite{And96}.

One of the consequences of Theorem \ref{intrologalgthm} is that we provide vanishing results on power sums twisted by the function $\mu \colon A_{+} \to A$, such as the following one (stated as Lemma \ref{partialanalog} later) used to prove Theorem \ref{introentire}.
\begin{lemma}\label{intropartialanalog} For $ k > r(n + \beta)/(q-1)$, the series
\[
H_{k,n-1}:=\sum_{a\in A_{+,k}} \mu(a)a(z_1)\dots a(z_{n-1}) 
\]
vanishes. 
\end{lemma}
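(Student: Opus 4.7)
The plan is to derive the vanishing of $H_{k,n-1}$ from the multivariable log-algebraicity theorem (Theorem \ref{logalgthm}) via a differentiation-and-specialization trick, with a bound on the $w$-degree of the resulting polynomial providing the numerical cutoff.

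First, I would apply Theorem \ref{logalgthm} in $n$ variables $X_1,\dots,X_n$ together with $w$, obtaining
\[
P(X_1,\dots,X_n,w) := \exp_{\phi}\bigg(\sum_{a\in A_{+}}\frac{\mu(a)\,C_a(X_1)\cdots C_a(X_n)}{a}\,w^{q^{\deg_{\theta}(a)}}\bigg) \in A[X_1,\dots,X_n,w].
\]
I then differentiate with respect to $X_n$ and evaluate at $X_n=0$. Because $\exp_{\phi}(y)=\sum_{i\geq 0}e_iy^{q^i}$ is $\FF_q$-linear and $q^i$ vanishes in characteristic $p$ for $i\geq 1$, only the $i=0$ summand contributes to the derivative; combined with $\partial C_a(X_n)/\partial X_n|_{X_n=0}=a$ (only the linear term of $C_a$ survives), this produces the polynomial identity
\[
\sum_{a\in A_{+}}\mu(a)\,C_a(X_1)\cdots C_a(X_{n-1})\,w^{q^{\deg_{\theta}(a)}} = \widetilde{P}(X_1,\dots,X_{n-1},w) \in A[X_1,\dots,X_{n-1},w],
\]
where $\widetilde{P}:=\partial P/\partial X_n|_{X_n=0}$ has $w$-degree at most $\deg_w P$.

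Next, I would specialize $X_i=\omega(z_i)$ for $1\leq i\leq n-1$, using the Anderson--Thakur relation $C_a(\omega(z_i))=a(z_i)\omega(z_i)$ to rewrite the left-hand side, so that
\[
\omega(z_1)\cdots\omega(z_{n-1})\sum_{k\geq 0}H_{k,n-1}\,w^{q^k} = \widetilde{P}(\omega(z_1),\dots,\omega(z_{n-1}),w).
\]
The right-hand side is a polynomial in $w$ of degree $\leq \deg_w P$, and each $\omega(z_i)$ is a unit in $\TT_{n-1}^{\times}$. Comparing the coefficients of $w^{q^k}$ therefore forces $H_{k,n-1}=0$ for every $k$ with $q^k>\deg_w P$.

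The main obstacle is the numerical bound $\deg_w P \leq q^{r(n+\beta)/(q-1)}$, needed so that $k>r(n+\beta)/(q-1)$ implies $q^k>\deg_w P$. Expanding $P=\sum_{m\geq 0}p_m(X)\,w^{q^m}$ with
\[
p_m(X)=\sum_{i=0}^{m}e_i\,g_{m-i}(X)^{q^i},\qquad g_j(X)=\sum_{a\in A_{+,j}}\frac{\mu(a)\,C_a(X_1)\cdots C_a(X_n)}{a},
\]
the vanishing $p_m\equiv 0$ for $m>r(n+\beta)/(q-1)$ amounts to a balance-of-terms calculation: the Newton polygon slopes of $\exp_{\phi}$ scale linearly in the rank $r$, and these must dominate the $\theta$-degree contributions coming from $\mu(a)$ and from the Carlitz coefficients of the $C_a$, each controlled by $\beta$. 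This quantitative refinement of Theorem \ref{logalgthm}, carried out in the style of \cite[\S 8]{APTR} adapted from the Carlitz case to general rank $r$, is the technical core of the proof.
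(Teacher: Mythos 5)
Your reduction is essentially the paper's own argument in light disguise. Differentiating in $X_n$ and setting $X_n=0$ extracts exactly the linear-in-$X_n$ part of each coefficient, which is what the paper does by reducing modulo $X_n^q$ (in both versions only the $\alpha_0$-term of $\exp_\phi$ and the linear term $aX_n$ of $C_a(X_n)$ survive); and your specialization $X_i=\omega(z_i)$, using $C_a(\omega(z_i))=a(z_i)\omega(z_i)$ and the invertibility of $\omega(z_i)$ in $\TT^{\times}$, is a legitimate substitute for the paper's formal $\mathbb{D}_n$-action and the injectivity of Lemma \ref{action}. The substantive shortfall is the step you yourself flag as the technical core: the vanishing of your $p_m$ (the paper's $Z^{\phi}_m$) for $m>r(n+\beta)/(q-1)$, i.e.\ Proposition \ref{vanishing}, is asserted rather than proved, and your sketch of it misplaces the mechanism. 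It is not a $\theta$-degree ``balance of terms'' against ``Carlitz coefficients controlled by $\beta$'': the paper evaluates $Z^{\phi}_k$ at points of $\exp_C(K_{\infty}\tilde{\pi})$ via the sup-norm $\dnorm{\,\cdot\,}_{\tilde{X}}$ of Angl\`es--Pellarin--Tavares Ribeiro, for which the Carlitz action is an isometry, so that $\dnorm{C_a(X_1)\cdots C_a(X_n)}_{\tilde{X}}=\dnorm{X_1\cdots X_n}_{\tilde{X}}$ uniformly in $a$ --- this is where $n/(q-1)$ enters --- and combines this with $\inorm{\alpha_i}\leq q^{q^i(\beta/(q-1)-i/r)}$ and $\deg_{\theta}\mu(a)\leq(1-\tfrac{1}{r})\deg_{\theta}(a)$ to get $\dnorm{Z^{\phi}_k}_{\tilde{X}}<1$ in the stated range. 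Crucially, a size estimate alone proves nothing: the conclusion $Z^{\phi}_k=0$ requires the integrality $Z^{\phi}_k\in A[X_1,\dots,X_n]$ furnished by Theorem \ref{logalgthm} together with the principle that an integral polynomial of $\dnorm{\,\cdot\,}_{\tilde{X}}$-norm less than $1$ vanishes. You have the integrality available, but you need to make the choice of norm, the isometry of the Carlitz action, and this last ``small plus integral implies zero'' step explicit for the argument to close.
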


The outline of the paper can be stated as follows. In $\S$ 2, we give further definitions which will be used throughout the paper, and review Taelman $L$-values corresponding to Drinfeld $A$-modules. In $\S$ 3, we calculate Taelman $L$-values corresponding to special Drinfeld modules over Tate algebras introduced by Angl\`{e}s and Tavares Ribeiro in \cite[\S 3]{AnglesTavaresRibeiro}. In $\S$4, we define the $L$-series in \eqref{Lseries2} corresponding to Drinfeld $A$-module $\phi$, and relate its values to Taelman $L$-values. Finally, $\S$ 5 occupies the proof of Theorem \ref{introentire}.
\section*{Acknowledgement} The author is thankful to Matthew A. Papanikolas for useful suggestions and fruitful discussions.

\section{Notations and Preliminaries}
We define the Tate algebra $\TT_{n,t}$ by considering the extra indeterminate $t$, and denote the completion of $\CC_{\infty}(z_1,\dots,z_n,t)$ with respect to the valuation $\ord$ by $\widetilde{\TT_{n,t}}$ .

We define $\tau \colon \widetilde{\TT_{n,t}} \to \widetilde{\TT_{n,t}}$  as a homomorphism of $\mathbb{F}_q(z_1,\dots,z_n,t)$-algebras such that $\tau(\theta)=\theta^q$, and its restriction on $\widetilde{\TT_{n}}$ can be defined similarly. Finally, for any $f\in A$ and any $\alpha\in \CC_{\infty}$, we denote the evaluation of $f$ at $\theta=\alpha$ by $f(\alpha)$, and when $X$ is an indeterminate, we use $f(X)$ for the evaluation of $f$ at $\theta=X$.
\subsection{Anderson-Thakur type elements}
Let $\alpha=(z_1-\theta)(z_2-\theta)\dots (z_n-\theta)\in \TT_{n}$. Choose $y=(-\theta)^{n}$. Then we observe that $\dnorm{y-\alpha}<\dnorm{\alpha}$, and that 
\[
\biggl\lVert \frac{y^{q^j}}{\tau^j(\alpha)} - 1 \biggr\rVert
= \biggl\lVert \frac{\tau^j(y-\alpha)}{\tau^j(\alpha)} \biggr\rVert\to 0
\quad \textup{as} \quad j \to \infty.
\]
Now fix a $(q-1)^{\text{st}}$ root of $y$, and let $\xi=y^{\frac{1}{q-1}}$. Then we define an Anderson-Thakur type element $\omega_n$ by the infinite product
\begin{equation}\label{AndersonThakur}
\omega_n:=\xi \prod_{j=0}^{\infty}\frac{y^{q^j}}{\tau^j(\alpha)}.
\end{equation}
We refer the reader to \cite{APTR} and \cite{GezmisPapanikolas} for further readings about Anderson-Thakur type elements.

\subsection{Fitting Ideals} Let $k$ be any field, and set $R:=k[\theta]$. Let $M$ be a finite $R$-module given as the direct sum $\bigoplus_{i=1}^{n} R/f_iR$ of the quotient modules where $f_i$ is monic in $R$ for $1\leq i \leq n$. Then we set a monic polynomial $\big[M\big]_{R}:= f_1\dot f_2 \dots f_n$, and we call the principal ideal generated by $\big[M\big]_{R}$ in $R$ the Fitting ideal of $M$. As an example, we observe that $\big[R/fR\big]_{R}=f$ for any monic polynomial $f\in R$. We also note that by \cite[\S 2.1]{AnglesTavaresRibeiro}, 
$[M]_{R}$ can be also defined by 
\[\big[M\big]_{R}=\det_{k[X]}((1\otimes X)Id-(\theta\otimes 1) \mid M\otimes_{k} k[X])_{|X=\theta},\]
which can be seen as the evaluation at $X=\theta$ of the characteristic polynomial of the action of $\theta$ on the $R$-module $M\otimes_{k}k$ given by $\theta \cdot (x\otimes 1) =\theta\cdot x\otimes 1$ for all $x\in M$. We refer the reader to \cite{AnglesTavaresRibeiro} and \cite{Taelman} for details about Fitting ideals.

\subsection{Drinfeld $A$-Modules}
Let $\phi$ be a Drinfeld $A$-module defined as in \eqref{intro0}. The exponential series
\[
\exp_{\phi}=\sum_{j\geq 0}\alpha_j\tau^j \in \CC_{\infty}[[\tau]]
\]
is defined so that $\alpha_0=1$ and $\exp_{\phi}a=\phi_{a}\exp_{\phi}$ for all $a\in A$ . The series induces an $\mathbb{F}_q$-linear endomorphism and also an entire function $\exp_{\phi} \colon \CC_{\infty} \to \CC_{\infty}$ defined by $\exp_{\phi}(x)=\sum_{j\geq 0}\alpha_jx^{q^j}$ for all $x\in \CC_{\infty}$.

On the other hand, the logarithm series corresponding to $\phi$ is defined by
\[
\log_{\phi}=\sum_{j\geq 0}\gamma_j\tau^j \in \CC_{\infty}[[\tau]]
\]
such that $\gamma_{0}=1$ and $\log_{\phi}\phi_{a}=a\log_{\phi}$ for all $a\in A$. It induces the logarithm function $\log_{\phi} \colon \CC_{\infty} \to \CC_{\infty}$ which is defined by $\log_{\phi}(x)=\sum_{j\geq 0}\gamma_jx^{q^j}$ for all $x\in \CC_{\infty}$ within the radius of convergence of $\log_{\phi}$. Furthermore, for any $x\in \CC_{\infty}$ where $\log_{\phi}(x)$ is defined, we have $\exp_{\phi}(\log_{\phi}(x))=\log_{\phi}(\exp_{\phi}(x))=x$.
\begin{example}\label{Carlitz} The Carlitz module $C$ defined by $C_{\theta}=\theta + \tau$ is an example of a rank 1 Drinfeld $A$-module. It has the exponential function $\exp_{C}$ so that $\Ker(\exp_{C})=\tilde{\pi}A$ where $\tilde{\pi} \in \CC_{\infty}^{\times}$.
\end{example}
Let $f\in A_{+}$ be an irreducible polynomial. For any element $a\in A$, define $\bar{a}$ such that $a \equiv \bar{a} \pmod f$. We define $\overline{\phi}\colon A\to A/fA[\tau]$ by
\[
\overline{\phi}_{\theta}=\sum_{i=0}^{r_0}\overline{\phi_{\theta,i}}\tau^i
\]
such that $\overline{\phi_{\theta,r_0}}\neq 0$. Observe that $r_0$ depends on the Drinfeld $A$-module $\phi$ and the polynomial $f$. Note also that $0\leq r_0\leq r$. We also set $\phi(A/fA)$ to $A/fA$ whose $A$-module action given by $\theta \cdot f=\overline{\phi}_{\theta}(f)$ for all $f\in A/fA$.

The Taelman $L$-value $L(\phi,A)$ corresponding to $\phi$ is defined by the following Euler product 
\[L(\phi,A)=\prod_{f} \frac{\big[A/fA\big]_{A}}{\big[\phi(A/fA)\big]_{A}},\] 
where the product is over irreducible polynomials of $A_{+}$. Taelman proved \cite[\S 5]{Taelman} that $L(\phi)$ is convergent in $\mathbb{K}_{\infty}$. 

We observe that $\big[A/fA\big]_{A}=f.$ Assume further that $r_0 \geq 1$. Let $h$ be an irreducible polynomial in $A_{+}$ which is not equal to $f$, and let
\[
P_{\phi}(x):=x^{r_0}+p_{r_0-1}x^{r_0-1}+\dots+p_1x+ p_0 \in A[x]
\] 
be the characteristic polynomial of the action of $\tau^d$ on the Tate module $T_{h}(\overline{\phi})$ (see \cite[\S 1]{Gekeler} and \cite[\S 4.10]{Goss} for details about the Tate module). By \cite[Cor.~3.2(a)]{ChangEl-GuindyPapanikolas}, we know that $\deg_{\theta}p_i < d$ for $1\leq i \leq r_0-1$, and $p_0=c(f)^{-1}f$ for some $c(f) \in \mathbb{F}_q^{\times}$. By the work of Gekeler \cite[Thm. 5.1(i)]{Gekeler} (see also \cite[Cor. 3.2(b)]{ChangEl-GuindyPapanikolas}), we know further that $\big[\phi(A/fA)\big]_{A}=c(f)P_{\phi}(1)$.

Define the polynomial $D_{f}^{\phi}(x):=1+c(f)p_1x+c(f)p_2fx^2+\dots + c(f)f^{r_0-1}x^{r_0}$. Following \cite[\S 3]{ChangEl-GuindyPapanikolas}, for any $s\in \ZZ$, the dual Goss $L$-series $L(\phi^{\vee},s-1)$ is given by 
\[L(\phi^{\vee},s-1)=\prod_{f \text{ prime in }A_{+}} D_f^{\phi}(f^{-s})^{-1}=\sum_{a\in A_{+}}\frac{\mu(a)}{a^s},\]
where the function $\mu:A_{+} \rightarrow A$ is defined by the Euler product expansion above and has the generating series (see also \cite[\S 3]{ChangEl-GuindyPapanikolas})
\begin{equation}\label{generating}
\sum_{i=0}^{\infty}\mu(f^i)x^i=  D_f^{\phi}(x)^{-1}.
\end{equation}
\begin{lemma}[{Chang, El-Guindy, Papanikolas \cite[Lem.~3.5]{ChangEl-GuindyPapanikolas}}]\label{mu}Let $a$ be an element in $A_{+}$.
\begin{itemize}
\item [(a)] The function $\mu\colon A_{+} \to A$ is multiplicative.
\item [(b)] $\deg_{\theta}\mu(a)\leq \big(1-\frac{1}{r}\big)\deg_{\theta}(a).$
\end{itemize}
\end{lemma}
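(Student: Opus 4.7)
The plan is to prove (a) and (b) separately, with (a) following formally from the Euler product and (b) reducing to a sharp estimate on the coefficients of the Frobenius characteristic polynomial.

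For part (a), since every $a\in A_+$ factors uniquely as $a=\prod_f f^{e_f(a)}$ (finite support), I would expand the Euler product as a formal Dirichlet-type series:
\[
\sum_{a\in A_+}\frac{\mu(a)}{a^s} = \prod_f D_f^\phi(f^{-s})^{-1} = \prod_f \sum_{i\geq 0}\mu(f^i)f^{-si} = \sum_{a\in A_+}\frac{\prod_f \mu(f^{e_f(a)})}{a^s}.
\]
Matching the coefficient of $a^{-s}$ on both sides (using unique factorization in $A_+$) forces $\mu(a)=\prod_f \mu(f^{e_f(a)})$, which is exactly the multiplicativity of $\mu$.

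For part (b), by (a) and the additivity of $\deg_\theta$ under multiplication, it suffices to prove $\deg_\theta\mu(f^i)\leq (1-1/r)\,id$ for every monic prime $f$ of degree $d$ and every $i\geq 0$. The crucial input is the Weil-type absolute value bound for Frobenius on the Tate module of the reduction $\overline{\phi}$: the roots $\beta_1,\dots,\beta_{r_0}$ of $P_\phi(x)$ all satisfy $\inorm{\beta_j}=q^{d/r_0}$ (see e.g.\ Gekeler \cite{Gekeler} or \cite[\S 4]{Goss}). Via the root-coefficient relations $p_{r_0-k}=(-1)^k e_k(\beta_1,\dots,\beta_{r_0})$ with the convention $p_{r_0}=1$, this yields the refined estimate
\[
\deg_\theta p_j \leq (r_0-j)\frac{d}{r_0} \qquad (0\leq j\leq r_0),
\]
strictly sharper than the bound $\deg_\theta p_j<d$ cited in the excerpt.

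With this in hand, I would analyze the Newton polygon of $D_f^\phi(x)=1+\sum_{k=1}^{r_0} c(f)\,p_k f^{k-1} x^k$ (where $p_{r_0}=1$). The coefficient of $x^k$ has degree at most $(r_0-k)d/r_0+(k-1)d=k(r_0-1)d/r_0$, with equality at the endpoints $k=0$ and $k=r_0$, so the Newton polygon is the single segment from $(0,0)$ to $(r_0,-(r_0-1)d)$. Hence in the factorization $D_f^\phi(x)=\prod_{j=1}^{r_0}(1-\alpha_j x)$ every reciprocal root satisfies $\deg_\theta\alpha_j=(1-1/r_0)d$, and
\[
D_f^\phi(x)^{-1}=\prod_{j=1}^{r_0}(1-\alpha_j x)^{-1}=\sum_{i\geq 0}h_i(\alpha_1,\dots,\alpha_{r_0})\,x^i
\]
identifies $\mu(f^i)=h_i(\alpha_1,\dots,\alpha_{r_0})$, the complete homogeneous symmetric polynomial. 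Since each monomial in $h_i$ has total $\alpha$-degree $i$,
\[
\deg_\theta\mu(f^i) \leq i\cdot\max_j\deg_\theta\alpha_j = i\Bigl(1-\frac{1}{r_0}\Bigr)d\leq i\Bigl(1-\frac{1}{r}\Bigr)d,
\]
using $r_0\leq r$. Multiplicativity from (a) then extends the inequality from prime powers to all of $A_+$, finishing (b).

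The main obstacle is securing the sharp bound $\deg_\theta p_j\leq (r_0-j)d/r_0$. The weak bound $\deg_\theta p_j<d$ cited in the excerpt from \cite[Cor.~3.2(a)]{ChangEl-GuindyPapanikolas} is insufficient: term-by-term, a contribution like $(-c(f)p_1)^i$ to $\mu(f^i)$ could have degree as large as $i(d-1)$, which exceeds the target $(1-1/r)id$ whenever $d>r$. The sharper bound is essentially the Riemann hypothesis for Drinfeld modules over $A/fA$ and has to be imported from the theory of Drinfeld modules over finite fields via the Tate module of the reduction $\overline{\phi}$.
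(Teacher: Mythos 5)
This lemma is imported verbatim from \cite[Lem.~3.5]{ChangEl-GuindyPapanikolas}; the paper gives no proof of its own, so there is nothing internal to compare against. Your proposal is a correct reconstruction of the argument, and it is essentially the one in the cited source. Part (a) is indeed formal from the Euler product definition of $\mu$ together with unique factorization in $A_{+}$. For part (b) you correctly identify the crux: the bound $\deg_{\theta}p_i<d$ quoted in \S 2.3 is too weak (a term like $(-c(f)p_1)^i$ could a priori contribute degree $i(d-1)$), and the needed input is the Riemann hypothesis for Drinfeld modules over finite fields, namely that every root $\beta_j$ of $P_{\phi}(x)$ satisfies $\inorm{\beta_j}=q^{d/r_0}$, where $r_0$ is the rank of the reduction $\ophi$. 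Your Newton-polygon detour is correct but slightly roundabout: since $D_f^{\phi}(x)=c(f)f^{-1}P_{\phi}(fx)$, the reciprocal roots are $\alpha_j=f/\beta_j$ and one reads off $\deg_{\theta}\alpha_j=(1-1/r_0)d$ directly from the Weil bound, without first converting it into coefficient estimates on the $p_j$ and back. From there the identification $\mu(f^i)=h_i(\alpha_1,\dots,\alpha_{r_0})$ and the ultrametric estimate give $\deg_{\theta}\mu(f^i)\le i(1-1/r_0)d\le i(1-1/r)d$, and multiplicativity propagates the bound to all of $A_{+}$ since $\deg_{\theta}$ is additive. The degenerate case $r_0=0$ (where $\mu(f^i)=0$ for $i\ge 1$) is harmless. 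The argument is sound.
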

\subsection{Drinfeld $\tilde{A}$-Modules}
Throughout this section, we set $z:=z_1$, and abbreviate $\TT_1$ by $\TT$. We also fix the notation $\tilde{A}$ for $\mathbb{F}_q(z)[\theta]$. Let $m\geq 1$ be an integer. Inspired by \cite[\S 2.4]{AnglesTavaresRibeiro}, we define the \textit{$z^m$-deformation of Drinfeld $A$-module $\phi$} as an $\mathbb{F}_q(z)$-algebra homomorphism $\tilde{\phi}\colon \tilde{A} \to \tilde{A}\{\tau\}$ given by 
\begin{equation}\label{definitonzdeform}
 \tilde{\phi}_{\theta}=\sum_{i=0}^r\tilde{\phi}_{\theta,i}\tau^i=\sum_{i=0}^rz^{mi}\phi_{\theta,i}\tau^i .
\end{equation} 
Note that if $r_0=0$, we have that $\tilde{\phi}_{f,j}\equiv 0 \pmod{f}$ for all $j\geq 0$.
\begin{lemma}\label{coefficients1} Let $f$ be an irreducible polynomial in $A_{+}$ of degree $d$ with $r_0\geq 1$, and let $\tilde{\phi}_{f,k}$ be the $k^{th}$ coefficient of $\tilde{\phi}_{f}$. Then for all $0\leq j \leq d-1$, we have
\[
\tilde{\phi}_{f,j}\equiv 0 \pmod{f},
\]
and for $d \leq j \leq r_0d$, we have
\[
\tilde{\phi}_{f,j}\equiv \sum_{i=1}^{\floor*{j/d}}z^{mid}\tilde{\phi}_{-c(f)p_i,j-id} \pmod{f},
\]
where $\floor*{\,\cdot \,}$ is the floor function.
\end{lemma}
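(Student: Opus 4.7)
The plan is to reduce the lemma to a Cayley--Hamilton type identity for $\overline{\phi}_f$ in $(A/fA)\{\tau\}$, and then transport the resulting statement from $\phi$ to $\tilde\phi$ via the elementary observation that the $z^m$-deformation multiplies each coefficient $\phi_{a,j}$ by $z^{mj}$.

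As a preliminary step, I would prove that for every $a \in A$, writing $\phi_a = \sum_{j\geq 0}\phi_{a,j}\tau^j$, one has
\[
\tilde\phi_a = \sum_{j\geq 0} z^{mj}\phi_{a,j}\tau^j.
\]
By $\FF_q$-linearity of both $\phi$ and $\tilde\phi$, it suffices to treat $a = \theta^k$ and argue by induction on $k$. The base case $k=1$ is the definition \eqref{definitonzdeform}. For the inductive step, one expands $\tilde\phi_{\theta^{k+1}} = \tilde\phi_{\theta^k}\tilde\phi_\theta$ using $\tau^j z^{mi} = z^{mi}\tau^j$ (since $\tau$ fixes $\FF_q(z)$) together with the usual skew-multiplication rule; the key observation is that every occurrence of $\tau^\ell$ in the product acquires exactly one factor of $z^{m\ell}$, matching the corresponding coefficient of $\phi_{\theta^{k+1}}$.

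Next, I would invoke the Cayley--Hamilton theorem for the Frobenius $\tau^d$ acting on the Tate module $T_h(\overline{\phi})$: since $P_\phi(x)$ is its characteristic polynomial, $P_\phi(\tau^d) = 0$ as an endomorphism of $\overline{\phi}$, and this identity lifts to $(A/fA)\{\tau\}$ via the injection $\End(\overline{\phi})\hookrightarrow (A/fA)\{\tau\}$ (equivalently, any nonzero skew polynomial of $\tau$-degree $n$ can kill at most $q^n$ points, while $\overline{\phi}[h^k]$ eventually exceeds this). Interpreting each scalar $p_i$ as multiplication by $\overline{\phi}_{p_i}$, using $\overline{\phi}_{p_0} = c(f)^{-1}\overline{\phi}_f$ (from $p_0 = c(f)^{-1}f$), and adopting the convention $p_{r_0}:=1$, multiplication by $-c(f)$ rearranges the identity into
\[
\overline{\phi}_f = \sum_{i=1}^{r_0} \overline{\phi}_{-c(f)p_i}\tau^{id} \quad \text{in } (A/fA)\{\tau\}.
\]
Reading off the coefficient of $\tau^j$: for $0 \leq j \leq d-1$ the right-hand sum is empty, so $\phi_{f,j} \equiv 0 \pmod f$; for $d \leq j \leq r_0 d$ (where $\lfloor j/d\rfloor \leq r_0$ automatically), we obtain $\phi_{f,j} \equiv \sum_{i=1}^{\lfloor j/d\rfloor}\phi_{-c(f)p_i,j-id} \pmod f$.

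Combining with the preliminary step gives $\tilde\phi_{f,j} = z^{mj}\phi_{f,j}$, which reduces to $0$ modulo $f$ when $0 \leq j \leq d-1$; for $d \leq j \leq r_0 d$, splitting $z^{mj} = z^{mid}\cdot z^{m(j-id)}$ converts each term $z^{mj}\phi_{-c(f)p_i,j-id}$ into $z^{mid}\tilde\phi_{-c(f)p_i,j-id}$, yielding the desired congruence. The only subtle point is the Cayley--Hamilton step above, where one must be careful to transfer the vanishing of $P_\phi(\tau^d)$ on $T_h(\overline{\phi})$ to an identity in $(A/fA)\{\tau\}$ itself; once that is in hand, the rest is bookkeeping.
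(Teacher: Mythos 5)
Your proof is correct, and its overall structure coincides with the paper's: both arguments rest on the observation $\tilde{\phi}_{f,j}=z^{mj}\phi_{f,j}$ (which you prove by induction on powers of $\theta$ and the paper simply records), reduce the lemma to the congruences
$\phi_{f,j}\equiv 0 \pmod{f}$ for $j<d$ and $\phi_{f,j}\equiv\sum_{i=1}^{\floor*{j/d}}\phi_{-c(f)p_i,j-id}\pmod{f}$ for $d\le j\le r_0d$, and then finish by the same splitting $z^{mj}=z^{mid}\cdot z^{m(j-id)}$. The one real difference is how those congruences are obtained: the paper outsources them entirely to \cite[Lem.~5.4(a),(b)]{ChangEl-GuindyPapanikolas}, whereas you re-derive them from the Cayley--Hamilton identity $P_{\phi}(\tau^d)=0$, rearranged into $\overline{\phi}_f=\sum_{i=1}^{r_0}\overline{\phi}_{-c(f)p_i}\tau^{id}$ in $(A/fA)\{\tau\}$ and read off coefficientwise. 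This is in substance the proof of the cited lemma, so you are not taking a genuinely different route so much as making the argument self-contained; the cost is that you must justify transferring the vanishing of $P_{\phi}(\tau^d)$ on $T_h(\overline{\phi})$ to the identity in $(A/fA)\{\tau\}$, which your counting argument (a nonzero skew polynomial of $\tau$-degree $N$ kills at most $q^N$ points, while the $h^k$-torsion grows without bound) handles correctly. A minor point worth noting explicitly if you write this up: the coefficient extraction from $\overline{\phi}_{-c(f)p_i}\tau^{id}$ versus $\tau^{id}\overline{\phi}_{-c(f)p_i}$ is insensitive to the order because $\tau^d$ is central in $(A/fA)\{\tau\}$, so no twisting by $q^{id}$ intervenes.
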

\begin{proof}
Observe that for any $k \geq 0$, we have $\tilde{\phi}_{f,k}=z^{mk}\phi_{f,k}$. By \cite[Lem.~5.4(a)]{ChangEl-GuindyPapanikolas}, for $0\leq j \leq d-1$, we get
\begin{equation}\label{congruence}
\phi_{f,j}\equiv 0 \pmod{f}.
\end{equation}
Thus, the first part follows from \eqref{congruence}. By \cite[Lem.~5.4(b)]{ChangEl-GuindyPapanikolas}, we see that for $d \leq j \leq r_0d$,
\[\phi_{f,j}\equiv \sum_{i=1}^{\floor*{j/d}}\phi_{-c(f)p_i,j-id} \pmod{f}.\]
Therefore,
\begin{equation}\label{congruence2}
\tilde{\phi}_{f,j}=z^{mj}\phi_{f,j}\equiv \sum_{i=1}^{\floor*{j/d}}z^{mid}z^{mj-mid}\phi_{-c(f)p_i,j-id}\equiv \sum_{i=1}^{\floor*{j/d}}z^{mid}\tilde{\phi}_{-c(f)p_i,j-id} \pmod{f}.
\end{equation}
\end{proof}
We define $\tilde{\phi}(\tilde{A}/f\tilde{A})$ as an $\tilde{A}$-module $\tilde{A}/f\tilde{A}$ with the induced action of $\tilde{\phi}$ on $\tilde{A}/f\tilde{A}$.
Note that 
\begin{equation}\label{generator}
\big[\tilde{\phi}(\tilde{A}/ f \tilde{A})\big]_{\tilde{A}}=\det_{\mathbb{F}_q(z)[X]}(X-\tilde{\phi}_{\theta} \mid \tilde{A}/f\tilde{A}\otimes_{\mathbb{F}_q(z)} \mathbb{F}_q(z)[X])_{|X=\theta}.
\end{equation}
In other words, $\big[\tilde{\phi}(\tilde{A}/ f \tilde{A})\big]_{\tilde{A}}$ is the evaluation at $X=\theta$ of the characteristic polynomial of the action of $\theta$ on the $\tilde{A}$-module $\tilde{A}/f\tilde{A}\otimes_{\mathbb{F}_q(z)} \mathbb{F}_q(z)$. The monic generator $\big[\tilde{\phi}(\tilde{A}/ f \tilde{A})\big]_{\tilde{A}}$ of the Fitting ideal of $\tilde{\phi}(\tilde{A}/f\tilde{A})$ is an element in $\tilde{A}$. By definition, it is a polynomial in $\theta$ of degree $d$, and is a polynomial in $z^m$ of degree at most $r_0d$. One can observe that if $r_0=0$, then 
\[\big[\tilde{\phi}(\tilde{A}/ f \tilde{A})\big]_{\tilde{A}}=\big[\tilde{A}/ f \tilde{A}]_{\tilde{A}}=f.\]
\begin{proposition}\label{fittingidealzdeformation} For any irreducible polynomial $f\in A_{+}$ of degree $d$ with $r_0\geq 1$, we have
\[
 [\tilde{\phi}(\tilde{A}/ f \tilde{A})]_{\tilde{A}}=f + c(f)p_1z^{md} + c(f)p_2z^{2md} + \dots +c(f)z^{r_0md}.
\]
\end{proposition}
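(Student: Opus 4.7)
The plan is to identify the claimed element with the characteristic polynomial $\chi(X)\in\mathbb{F}_q(z)[X]$ of the $\mathbb{F}_q(z)$-linear endomorphism $\tilde{\phi}_\theta$ of the $d$-dimensional space $\tilde{A}/f\tilde{A}$, evaluated at $X=\theta$; by \eqref{generator}, this evaluation is precisely $[\tilde{\phi}(\tilde{A}/f\tilde{A})]_{\tilde{A}}$.

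First, I would repackage Lemma \ref{coefficients1} into a single operator identity on $\tilde{A}/f\tilde{A}$. Because $f\in\mathbb{F}_q[\theta]$ is irreducible, Gauss's Lemma gives that $f$ is also irreducible in $\mathbb{F}_q(z)[\theta]$, so $\tilde{A}/f\tilde{A}$ is the field $\mathbb{F}_{q^d}(z)$ and the Frobenius $\tau$ satisfies $\tau^d=\Id$ on it. Expanding $\tilde{\phi}_f(x)=\sum_j\tilde{\phi}_{f,j}\,x^{q^j}$ for $x\in\tilde{A}/f\tilde{A}$, substituting the mod-$f$ congruences of Lemma \ref{coefficients1}, reindexing via $k=j-id$, and collapsing $x^{q^{k+id}}=x^{q^k}$, one arrives at
\[
\tilde{\phi}_f \;=\; \sum_{i=1}^{r_0} z^{imd}\,\tilde{\phi}_{-c(f)p_i}
\]
as endomorphisms of $\tilde{A}/f\tilde{A}$; the $\tau$-degree bound on $\tilde{\phi}_{-c(f)p_i}$ modulo $f$ built into the range of summation in Lemma \ref{coefficients1} ensures nothing is lost in the reindexing. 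Since $\tilde{\phi}$ is an $\mathbb{F}_q(z)$-algebra homomorphism, this rearranges to $g(\tilde{\phi}_\theta)=\tilde{\phi}_{g(\theta)}=0$, where
\[
g(X) \;:=\; f(X) + c(f)p_1(X)z^{md} + c(f)p_2(X)z^{2md} + \dots + c(f)z^{r_0md} \;\in\;\mathbb{F}_q[z][X]
\]
is monic of $X$-degree $d$.

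The final task is to identify $g(X)$ with $\chi(X)$. Working on the free $\mathbb{F}_q[z]$-module $\mathbb{F}_q[z][\theta]/f$ of rank $d$, both $g$ and $\chi$ are monic of $X$-degree $d$ and annihilate $\tilde{\phi}_\theta$ (the latter by Cayley--Hamilton), so $h(X):=g(X)-\chi(X)\in\mathbb{F}_q[z][X]$ has $\deg_X h<d$ and still satisfies $h(\tilde{\phi}_\theta)=0$. Specializing $z=0$ collapses $\tilde{\phi}_\theta$ to ordinary multiplication by $\theta$ on $A/fA\cong\mathbb{F}_{q^d}$, whose minimal polynomial is $f(X)$ of degree $d$. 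So $h(X)|_{z=0}$ is an annihilator of this multiplication of $X$-degree $<d$, hence vanishes; every coefficient of $h$ is thus divisible by $z$ in $\mathbb{F}_q[z]$. Writing $h=z\tilde h$ and using that $z$ is a non--zero-divisor on the free module $\mathbb{F}_q[z][\theta]/f$, we obtain $\tilde{h}(\tilde{\phi}_\theta)=0$ with $\tilde{h}$ of the same sort; iterating forces $h\in\bigcap_{k\geq 0}z^k\mathbb{F}_q[z][X]=0$. Hence $g=\chi$, and evaluating at $X=\theta$ yields the stated formula for $[\tilde{\phi}(\tilde{A}/f\tilde{A})]_{\tilde{A}}$.

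The main delicate step, I expect, is the passage from the coefficient-level congruences of Lemma \ref{coefficients1} to the clean operator identity $\tilde{\phi}_f=\sum_i z^{imd}\tilde{\phi}_{-c(f)p_i}$ on $\tilde{A}/f\tilde{A}$; this combines $\tau^d=\Id$ (from the field structure of $\tilde{A}/f\tilde{A}$) with the $\tau$-degree bounds on $\bar{\tilde{\phi}}_{-c(f)p_i}$ mod $f$ that are already packaged into the range of Lemma \ref{coefficients1}. The subsequent descent-to-$z=0$ step is routine once the correct $\mathbb{F}_q[z]$-integral setup is in place.
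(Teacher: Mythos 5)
Your proof is correct, and it rests on the same raw inputs as the paper's --- the identification \eqref{generator} of the Fitting generator with the characteristic polynomial of $\tilde{\phi}_{\theta}$, the congruences of Lemma \ref{coefficients1}, Cayley--Hamilton, and the specialization $z=0$ --- but the endgame is genuinely different. The paper writes the unknown characteristic polynomial as $f+A_1z^m+\dots+A_{r_0d}z^{r_0md}$ with $\deg_{\theta}(A_i)<d$, uses the fact that it annihilates $\tilde{A}/f\tilde{A}$, substitutes Lemma \ref{coefficients1}, and then solves for the $A_i$ one at a time by comparing coefficients of successive powers of $z^m$. You instead verify that the explicit candidate $g$ annihilates $\tilde{\phi}_{\theta}$ and conclude by a uniqueness argument: the difference $h=g-\chi$ has $X$-degree $<d$, kills $\tilde{\phi}_{\theta}$, and vanishes at $z=0$ because $\tilde{\phi}_{\theta}$ there degenerates to multiplication by $\theta$, whose minimal polynomial already has degree $d$; the descent on the free lattice $\mathbb{F}_q[z][\theta]/(f)$ then forces $h=0$. (A slight shortcut: a primitive annihilator in $\mathbb{F}_q[z][X]$ survives specialization at $z=0$, so the minimal polynomial of $\tilde{\phi}_{\theta}$ over $\mathbb{F}_q(z)$ already has degree $\geq d$ and $h=0$ follows without iterating.) Your route buys a cleaner argument that avoids the paper's coefficient-by-coefficient bookkeeping; its cost is the upgrade of Lemma \ref{coefficients1} to the operator identity $\tilde{\phi}_f=\sum_{i=1}^{r_0}z^{imd}\tilde{\phi}_{-c(f)p_i}$ on $\tilde{A}/f\tilde{A}$, which needs not only the stated range of the lemma but also that $\tilde{\phi}_{f,j}\equiv 0\pmod{f}$ for $j>r_0d$ and that $\tilde{\phi}_{-c(f)p_i,k}\equiv 0\pmod{f}$ for $k>(r_0-i)d$; both follow from the fact that $\overline{\phi}$ has rank $r_0$ over $A/fA$ together with the degree bounds on the $p_i$ from \cite[Cor.~3.2]{ChangEl-GuindyPapanikolas}, and since the paper's own computation uses the same facts tacitly, this is a point to make explicit rather than a gap.
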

\begin{proof}
To ease the notation, let us define $Q(z):=\big[\tilde{\phi}(\tilde{A}/ f \tilde{A})\big]_{\tilde{A}}$. Note that if we set $z=0$ in \eqref{definitonzdeform}, then $\tilde{\phi}$ becomes the trivial Drinfeld $A$-module meaning that the $A$-module action on $\tilde{\phi}(\tilde{A}/ f \tilde{A})$ is given by multiplying elements in $A/fA$ by $\theta$. Therefore, by \eqref{generator}, we have that $Q(0)=f$. Now let $A_i \in A$ for $1\leq i \leq r_0d$, and set $Q(z)=f + A_1z^m +A_2z^{2m} + \dots + A_{r_0d}z^{r_0md}$. Since $Q(z)$ is a monic polynomial in $\theta$ of degree $d$, we have that $\deg_{\theta}(A_i) < d$ for all $i$. On the other hand, considering $\tilde{A}$-action on the module $ \tilde{A}/ f \tilde{A}$ shows that all elements in $ \tilde{A}/ f \tilde{A}$ is annihilated by $Q(z)$. That is, for any $x\in \tilde{A}/ f \tilde{A}$, we have by Lemma \ref{coefficients1} that
\begin{equation}
\begin{split}
Q(z)\cdot x&=z^m\tilde{\phi}_{A_1}(x) + \dots + z^{md}\tilde{\phi}_{A_{d}}(x) + \dots +z^{r_0md}\tilde{\phi}_{A_{r_0d}}(x)+\tilde{\phi}_f(x) \\
&\equiv z^m\tilde{\phi}_{A_1}(x) + \dots + z^{md}\tilde{\phi}_{A_{d}}(x) + \dots +z^{r_0md}\tilde{\phi}_{A_{r_0d}}(x)+ \sum_{j=d}^{r_0d}\sum_{i=1}^{\floor*{j/d}}z^{mid}\tilde{\phi}_{-c(f)p_i,j-id}(x)  \\
&\equiv 0 \pmod{f}.
\end{split}
\end{equation}
Since $\deg_{\theta}(A_i) < d$ for all $i$, we have that $z^m\tilde{\phi}_{A_1,0}(x)=z^mA_1x=0$. This implies that $A_1=0$. By induction, we see that $A_i=0$ for all $1\leq i \leq d-1$. Now, the coefficient of $z^{md}$ term in $Q(z)\cdot x$ becomes $(\tilde{\phi}_{-c(f)p_1,0} + \tilde{\phi}_{A_{d,0}})x$. But this term is equal to zero for all $x \in \tilde{A}/ f \tilde{A}$. Thus, we have $\tilde{\phi}_{-c(f)p_1,0}=-\tilde{\phi}_{A_{d,0}}$ by the fact the degrees of $p_1$ and $A_d$ are less than $d$. Therefore, we have $c(f)p_1=A_{d}$. Moreover, the coefficient of $z^{m(d+1)}$ term in $Q(z)\cdot x$ appears as $(\tilde{\phi}_{-c(f)p_1,1} + \tilde{\phi}_{A_{d,1}})x^q + \tilde{\phi}_{A_{d+1,0}}x$. Notice that the first two terms cancel out, and the expression is equal to 0 for all $x$. This implies that $A_{d+1}=0$. Similar calculation as above shows that $A_{d+j}=0$ for all $1\leq j \leq d-1$. If we apply the same logic to other coefficients of $Q(z)$, we see that $Q(z)=f + c(f)p_1z^{md} + c(f)p_2z^{2md} + \dots +c(f)z^{r_0md}$ as desired.
\end{proof}

\begin{remark}\label{basis}(See also proof of \cite[Prop. 6.2]{AnglesTaelman}.) Let $f$ be an irreducible polynomial in $A_{+}$ defined by $f=a_0+a_1\theta + a_2\theta^2 + \dots +\theta^d$ such that $r_0\geq 1$. Let $\overline{\mathbb{F}}_q$ be the algebraic closure of $\mathbb{F}_q$. Since the characteristic polynomial is invariant under the extension of scalars, without loss of generality, we can replace the field $\tilde{A}/f\tilde{A}\otimes_{\mathbb{F}_q(z)} \mathbb{F}_q(z)$ in \eqref{generator} by $ F=\tilde{A}/f\tilde{A}\otimes_{\mathbb{F}_q(z)}\overline{\mathbb{F}}_q(z)$.

Let $ \Gal(\tilde{A}/f\tilde{A}/\mathbb{F}_q(z))$ be the Galois group of the field extension $\tilde{A}/f\tilde{A}$ of $\mathbb{F}_q(z)$. We have the isomorphism
\[
F \cong \prod_{g \in \Gal(\tilde{A}/f\tilde{A}/\mathbb{F}_q(z))} \overline{\mathbb{F}}_q(z)
\] 
via a map sending $a \otimes b \in F$ to $(g(a)b)_{g \in S}$ where $S$ is the set of embeddings of $\tilde{A}/f\tilde{A}$ into $\overline{\mathbb{F}}_q(z)$. Observe that $\{1\otimes 1,\bar{\theta}\otimes 1,\bar{\theta^2}\otimes 1,\dots, \bar{\theta}^{d-1}\otimes 1\}$ is an $\overline{\mathbb{F}}_q(z)$-basis for $F$. Furthermore, we have that 
\[
\theta \cdot \begin{bmatrix}
    1\otimes 1 \\
    \bar{\theta}\otimes 1\\
    \vdots  \\
	 \vdots \\
    \bar{\theta}^{d-1} \otimes 1 \\
\end{bmatrix}=P\begin{bmatrix}
    1\otimes 1 \\
    \bar{\theta}\otimes 1\\
    \vdots  \\
	 \vdots \\
    \bar{\theta}^{d-1} \otimes 1 \\
\end{bmatrix}=\begin{bmatrix}
    0 & 1 &0 &\hdots&0 \\
    0& 0 & 1 & \ddots& \vdots \\
    \vdots   & \vdots & \ddots&\ddots & 0 \\
	 \vdots & \vdots & \vdots & 0&1\\
    -a_0& -a_1 & \hdots & \hdots & -a_{d-1}  \\
\end{bmatrix}\begin{bmatrix}
    1\otimes 1 \\
    \bar{\theta}\otimes 1\\
    \vdots  \\
	 \vdots \\
    \bar{\theta}^{d-1} \otimes 1 \\
\end{bmatrix},
\]
where $P$ is the transpose of a companion matrix defined above. Let $\eta \in \overline{\mathbb{F}}_q$ be a fixed root of $f$. Then, there exists $Q\in \GL_{d}(\overline{\mathbb{F}}_q)$ such that 
\[
QPQ^{-1}=\begin{bmatrix}
    \eta & 0 &0 &0&0 \\
    0& \eta^q & 0 & \ddots& 0 \\
    0   & 0 &\ddots  &\ddots & \vdots \\
	 \vdots & \vdots & \ddots & \ddots&0\\
    0& 0 & 0 & 0 & \eta^{q^{d-1}}  \\
\end{bmatrix}.
\]
Now, define $\{v_1,v_2,\dots,v_d\}$ by
\[
\begin{bmatrix}
    v_1\\
    v_2\\
	 \vdots \\
    v_d  \\
\end{bmatrix}=Q\begin{bmatrix}
    1\otimes 1 \\
    \bar{\theta}\otimes 1\\
	 \vdots \\
    \bar{\theta}^{d-1} \otimes 1 \\
\end{bmatrix}.
\]
Therefore, the set $\{ v_1,\dots,v_d\}$ is an $\overline{\mathbb{F}}_q(z)$-basis for $F$ and $\overline{\mathbb{F}}_q(z)[X]$-basis for $F[X]$. Moreover, considering indices modulo $d$, we have 
$\theta \cdot v_i=m_i v_i$ where $m_{i}\in \overline{\mathbb{F}}_q^{\times}$ such that $f(z)=\prod_{i=1}^{d}(z-m_i)$, and $m_i^{q}=m_{i+1}$. Furthermore, we have that
\begin{equation}\label{tauaction}
\tau \cdot v_i =v_{i-1}.
\end{equation}
Thus, for any basis element $v_j$, we have that
\begin{equation}\label{actionofz}
\begin{split}
&(X-\tilde{\phi}_{\theta})\cdot v_j=\bigg(X-\sum_{k=0}^{r_0}z^{mk}\phi_{\theta,k}\tau^k\bigg)\cdot v_j \\
&=\bigg(X-m_{j}-\sum_{i\geq 1,\, r_0-di\geq 0}z^{mdi}\phi_{\theta,di}(m_j)\bigg)v_j+ \bigg(-\phi_{\theta,1}(m_{j-1})z^m -\sum_{i\geq 1,\, r_0-di\geq 1}z^{mdi+m}\phi_{\theta,di+1}(m_{j-1})\bigg)v_{j-1} 
 \\
&+\dots +\bigg(-\phi_{\theta,d-1}(m_{j-(d-1)})z^{md-m}-\sum_{i\geq 1,\, r_0-di\geq d-1}z^{mdi+md-m}\phi_{\theta,di+d-1}(m_{j-(d-1)})\bigg)v_{j-(d-1)}.
\end{split}
\end{equation}
\end{remark}

\section{Taelman $L$-values}
In this section, we analyze Taelman $L$-values corresponding to special Drinfeld modules over Tate algebras which were introduced by Angl\`{e}s and Tavares Ribeiro in \cite[\S 3]{AnglesTavaresRibeiro}. For more information about Drinfeld modules over Tate algebras, we direct the reader to \cite{APTR} and \cite{GezmisPapanikolas}.
\subsection{Drinfeld $\mathbb{A}$-Modules}
In this section, we investigate the Taelman $L$-value corresponding to the Drinfeld $\mathbb{A}$-module $\varphi$ defined in \eqref{canonical}.

We note that the Drinfeld $\mathbb{A}$-module $\varphi$ has an exponential function $\exp_{\varphi}\colon \widetilde{\TT_{n}} \to \widetilde{\TT_{n}}$ which is defined by
\[
\exp_{\varphi}(g)=\sum_{j} \ell_j(z_1)\dots\ell_j(z_n)\alpha_j\tau^j(g),
\]
for all $g\in \widetilde{\TT_{n}}$ where $\alpha_j$'s are coefficients of $\exp_{\phi}$. Note that by \cite[Prop. 3.2.3]{GezmisPapanikolas}, $\exp_{\varphi}$ converges everywhere on $\TT_n$. Its logarithm function $\log_{\varphi}\colon \widetilde{\TT_{n}} \to \widetilde{\TT_{n}}$ is given by
\[
\log_{\varphi}(g)=\sum_{j} \ell_j(z_1)\dots\ell_j(z_n)\gamma_j\tau^j(g)
\]
for all $g\in \widetilde{\TT_{n}}$ where $\gamma_j$'s are coefficients of $\log_{\phi}$ and $g$ within the radius of convergence of $\log_{\varphi}$. Let $f$ be an irreducible element in $A_{+}$, and let $\varphi(\mathbb{A}/f\mathbb{A})$ be the $\mathbb{A}$-module $\mathbb{A}/f\mathbb{A}$ with the induced action of $\varphi$ on $\mathbb{A}/f\mathbb{A}$. Note that 
\begin{equation}\label{trivialaction2}
\big[\mathbb{A}/ f \mathbb{A}\big]_{\mathbb{A}}=f,
\end{equation}
and
\begin{equation}\label{generatorvarphi}
 \big[\varphi(\mathbb{A}/ f \mathbb{A})\big]_{\mathbb{A}}=\det_{\mathbb{F}_q(z_1,\dots,z_n)[X]}(X-\varphi_{\theta} \mid \mathbb{A}/f\mathbb{A}\otimes_{\mathbb{F}_q(z_1,\dots,z_n)} \mathbb{F}_q(z_1,\dots,z_n)[X])_{|X=\theta}.
\end{equation}
The Taelman $L$-value $L(\varphi,\mathbb{A})$ corresponding to $\varphi$ is defined by the following Euler product
\begin{equation}\label{lseriesdef}
L(\varphi,\mathbb{A})=\prod_{f} \frac{\big[\mathbb{A}/f\mathbb{A}\big]_{\mathbb{A}}}{\big[\varphi(\mathbb{A}/f\mathbb{A})\big]_{\mathbb{A}}},
\end{equation}
where the product is over irreducible polynomials of $A_{+}$. By \cite[Thm.~2.7]{Demeslay14} (see also \cite[\S 3.1]{AnglesTavaresRibeiro}), $L(\varphi,\mathbb{A})$ converges in the ring $\mathbb{F}_q(z_1,\dots,z_n)((\theta^{-1}))$.
\begin{remark}\label{basis2}
Let $f$ be a prime in $A_{+}$ of degree $d$ with $r_0\geq 1$. As in Remark \ref{basis}, since the characteristic polynomial is invariant under the extension of scalars, without loss of generality, we can replace the ring 
$\mathbb{A}/f\mathbb{A}\otimes_{\mathbb{F}_q(z_1,\dots,z_n)} \mathbb{F}_q(z_1,\dots,z_n)$ in \eqref{generatorvarphi} by the ring
\begin{equation}\label{thefieldF}
F=\mathbb{A}/f\mathbb{A}\otimes_{\mathbb{F}_q(z_1,\dots,z_n)}\overline{\mathbb{F}}_q(z_1,\dots,z_n)\cong \prod_{g\in \Gal(\mathbb{A}/f\mathbb{A}/\mathbb{F}_q(z_1,\dots,z_n))} \overline{\mathbb{F}}_q(z_1,\dots,z_n).
\end{equation}
Moreover, by the similar calculations in Remark \ref{basis}, we can pick the $\overline{\mathbb{F}}_q(z_1,\dots,z_n)$-basis $\{v_1,\dots,v_n\}$ for $F$ such that \eqref{tauaction} holds. Furthermore, $\theta \cdot v_i=m_i v_i$ where $m_{i}\in \overline{\mathbb{F}}_q^{\times}$ with the property that for $1\leq k \leq n$,
\begin{equation}\label{productwrtbasis}
f(z_k)=\prod_{i=1}^{d}(z_k-m_i)
\end{equation}
and $m_i^{q}=m_{i+1}$ so that the indices are modulo $d$. For $1\leq k \leq n$ and $1\leq i \leq d$, let us define 
\begin{equation}\label{tidef}
t_{k,i}=z_k-m_i.
\end{equation}
Thus, for any basis element $v_j$, we have that
\begin{equation}\label{actionofXandvarphi}
\begin{split}
&(X-\varphi_{\theta})\cdot v_j =\bigg(X-\sum_{i=0}^{r_0}\ell_i(z_1)\dots\ell_i(z_n)\phi_{\theta,i}\tau^i\bigg)\cdot v_j \\
&=\bigg(X-m_{j}-\sum_{i\geq 1,\, r_0-di\geq 0}\phi_{\theta,di}(m_j)\prod_{k=1}^nf(z_k)^i\bigg)v_j+ \\ 
&\bigg(-\phi_{\theta,1}(m_{j-1})\prod_{k=1}^nt_{k,j-1}-\sum_{i\geq 1,\, r_0-di\geq 1}\phi_{\theta,di+1}(m_{j-1})\prod_{k=1}^nt_{k,j-1}f(z_k)^i\bigg)v_{j-1}
+\dots \\
&+\bigg(-\phi_{\theta,d-1}(m_{j-(d-1)})\prod_{k=1}^nt_{k,j-(d-1)}\dots t_{k,j-1} \\
&-\sum_{i\geq 1,\,r_0-di\geq d-1}\phi_{\theta,di+d-1}(m_{j-(d-1)})\prod_{k=1}^nt_{k,j-(d-1)}\dots t_{k,j-1}f(z_k)^i\bigg)v_{j-(d-1)}.
\end{split}
\end{equation}
\end{remark}
We now recall the definition of $r_0$ from \S 2.3 and observe that if $r_0=0$, then
\[\big[\varphi(\mathbb{A}/ f \mathbb{A})\big]_{\mathbb{A}}=\big[\mathbb{A}/ f \mathbb{A}\big]_{\mathbb{A}}=f.\] 
\begin{proposition}\label{fittingidealofcanonical}Let $\varphi$ be a Drinfeld $\mathbb{A}$-module of rank $r$ defined in \eqref{canonical}. Then for all irreducible $f\in A_{+}$ of degree $d$ with $r_0\geq 1$, we have
\[
\big[\varphi(\mathbb{A}/ f \mathbb{A})\big]_{\mathbb{A}}=f + c(f)p_1\prod_{k=1}^nf(z_k)+ c(f)p_2\prod_{k=1}^nf(z_k)^2 + \dots +c(f)\prod_{k=1}^nf(z_k)^{r_0}.
\]
\end{proposition}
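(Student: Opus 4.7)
The plan is to compute the characteristic polynomial $\det(XI-\varphi_\theta)$ directly in the basis $\{v_1,\dots,v_d\}$ of Remark~\ref{basis2}, and then evaluate at $X=\theta$. Equation~\eqref{actionofXandvarphi} gives the matrix $M$ of $\varphi_\theta$ explicitly: the diagonal entry $M_{jj}$ collects the contributions of the $\tau^{di'}$-terms with $i'\geq 0$ and $di'\leq r_0$, and for $1\leq l\leq d-1$ the off-diagonal entry $M_{j-l,j}$ collects the contributions of the $\tau^{di'+l}$-terms with $di'+l\leq r_0$, always carrying the prefactor $\prod_{k=1}^{n}t_{k,j-l}\,t_{k,j-l+1}\cdots t_{k,j-1}$ built from consecutive $t_{k,\star}$'s.

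I will expand $\det(XI-M)$ as a signed sum over permutations $\sigma\in S_d$, grouping each summand by the cycle decomposition of $\sigma$. The identity permutation contributes $\prod_j(X-M_{jj})$, while a nontrivial cycle $(j_1,\dots,j_c)$ with $\sigma(j_\alpha)=j_{\alpha+1}$ contributes the product of the entries $M_{j_{\alpha+1},j_\alpha}$. Defining shifts $l_\alpha:=j_\alpha-j_{\alpha+1}\pmod{d}$ (taken in $\{1,\dots,d-1\}$), we have $\sum_\alpha l_\alpha=dh$ for some positive integer $h$, since the cycle returns to its starting point in $\mathbb{Z}/d\mathbb{Z}$. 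The key combinatorial observation is that the accumulated partial-product prefactors along the cycle,
\[
\prod_{\alpha=1}^{c}\prod_{k=1}^{n}\prod_{s=0}^{l_\alpha-1}(z_k-m_{j_\alpha-l_\alpha+s}),
\]
trace, for each $k$, a closed walk of length $dh$ on the residues $\{m_1,\dots,m_d\}$; each $m_\beta$ is visited exactly $h$ times, so by~\eqref{productwrtbasis} this product collapses to $\prod_{k=1}^{n}f(z_k)^h$. The remaining cycle factors depend only on the evaluations $\phi_{\theta,\cdot}(m_\star)$, independently of $z_1,\dots,z_n$, and the diagonal $M_{jj}$'s likewise contribute only powers of $\prod_k f(z_k)$ through their $\tau^{di'}$-summands. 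Therefore $\det(XI-M)\vert_{X=\theta}$ is a polynomial of $Y$-degree at most $r_0$ in the single quantity $Y:=\prod_{k=1}^{n}f(z_k)$, with coefficients in $A$:
\[
[\varphi(\mathbb{A}/f\mathbb{A})]_{\mathbb{A}}=B_0+B_1Y+\cdots+B_{r_0}Y^{r_0},\qquad B_s\in A,
\]
and $B_0=\prod_j(\theta-m_j)=f$ comes from the identity permutation with no $\tau^{di'}$-contributions for $i'\geq 1$.

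To identify $B_s$ for $s\geq 1$, I will run exactly the same matrix expansion for the $z^m$-deformation $\tilde\phi$ in the basis of Remark~\ref{basis}: the computations are structurally identical, with each partial $t_{k,\star}$-factor replaced by $z^m$ and each complete product $\prod_k f(z_k)$ replaced by $z^{md}$. In particular, for any given choice of permutation together with the $i'$-summands at each slot, the $Y$-exponent in the $\varphi$-expansion equals the $z^{md}$-exponent in the $\tilde\phi$-expansion, and the non-$z$ factors (built from $\phi_{\theta,\cdot}(m_\star)$) are identical. Hence $B_s$ also appears as the coefficient of $z^{smd}$ in $[\tilde\phi(\tilde{A}/f\tilde{A})]_{\tilde{A}}$, which by Proposition~\ref{fittingidealzdeformation} equals $c(f)p_s$ (with the convention $p_{r_0}=1$).

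I expect the main obstacle to be the combinatorial cycle argument—especially verifying that all partial $t_{k,\star}$-factors arising from a cycle cleanly combine into complete products $\prod_k f(z_k)^h$ via the closed-walk description on $\mathbb{Z}/d\mathbb{Z}$—and, more subtly, confirming that the $B_s$ produced by the $\varphi$-expansion truly coincide coefficient-by-coefficient with those arising in the $\tilde\phi$-expansion, so that Proposition~\ref{fittingidealzdeformation} may legitimately be invoked to pin them down as $c(f)p_s$.
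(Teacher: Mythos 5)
Your proposal is correct and follows essentially the same route as the paper: a Leibniz expansion of the determinant over permutations, a cycle-by-cycle argument showing the accumulated $t_{k,\star}$-factors collapse into whole powers of $\prod_{k}f(z_k)$, and then identification of the coefficients by matching the expansion against the $z^m$-deformation $\tilde\phi$ so that Proposition~\ref{fittingidealzdeformation} pins them down as $c(f)p_s$. The only cosmetic difference is that the paper first treats $n=1$ (and reduces to $r_0<d$ before writing the matrix explicitly) and then scales up, whereas you run the general case directly.
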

\begin{proof}
We extend the idea of Angl\`{e}s and Taelman in the proof of \cite[Prop.~6.2]{AnglesTaelman}. Assume that $n=1$, and fix an irreducible polynomial $f\in A_{+}$ of degree $d$ with $r_0\geq 1$. Set $t_{i}:=t_{1,i}$. We see from \eqref{productwrtbasis} and \eqref{tidef} that
\begin{equation}\label{tiandf}
f(z_1)=\prod_{i=1}^dt_{i}.
\end{equation}
Let $F=\mathbb{A}/f\mathbb{A}\otimes_{\mathbb{F}_q(z_1)}\overline{\mathbb{F}}_q(z_1)$ and $R_{\varphi}$ be the  matrix  representing the map $X-\varphi_{\theta}$ on $F[X]$ with respect to the $\overline{\mathbb{F}}_q(z_1)[X]$-basis $\{v_1,\dots,v_d\}$. We claim that $\det(R_{\varphi})$ is a polynomial in $f(z_1)$ which has constant coefficient equal to $f(X)$. By \eqref{actionofXandvarphi}, it is enough to look at $\det(R_{\varphi})$ when $r_0 < d$. Let $R$ be the matrix defined by
\[
R=[a_{i,j}]:=\begin{bmatrix}
    X-m_{1} & -\phi_{\theta,1}(m_1) &\hdots & -\phi_{\theta,d-1}(m_1) \\
    -\phi_{\theta,d-1}(m_2)& X-m_{2} & \hdots & \vdots \\
    \vdots   & -\phi_{\theta,d-1}(m_3) &\hdots  & \vdots \\
	 \vdots & \vdots & \hdots & -\phi_{\theta,2}(m_{d-2})\\
	 -\phi_{\theta,2}(m_{d-1}) & \vdots & \hdots & -\phi_{\theta,1}(m_{d-1}) \\
    -\phi_{\theta,1}(m_d) & -\phi_{\theta,2}(m_d) & \hdots & X-m_{d}  \\
\end{bmatrix},
\]
and set
\begin{equation}\label{coefficients}
 b_{i,j}:= \left\{
	\begin{array}{ll}
      a_{ij}t_it_{i+1}\dots t_{j-1+d } & \text{if } i>j \\
      a_{ij}\prod_{k=i}^{j-1}t_k & \text{if } i< j \\
      a_{ij} & \text{if } i=j \\
\end{array}. 
\right. 
\end{equation}
By \eqref{actionofXandvarphi}, one can show that $R_{\varphi}=[b_{i,j}]$ when $r_0< d$. Let $S_{d}$ be the symmetric group of degree $d$ and $\sgn(\sigma)$ be the sign of a permutation $\sigma \in S_d$. Then by Leibniz formula, we have
\begin{equation}\label{Leibniz}
\det(R_{\varphi})=\sum_{\sigma \in S_d}\sgn(\sigma)\bigg(\prod_{i=1}^db_{\sigma(i),i}\bigg).
\end{equation}
Notice that  
\begin{equation}\label{productwrtbasisX}
f(X)=\prod_{i=1}^{d}(X-m_i)=\prod_{i=1}^db_{ii}.
\end{equation}
Thus, the identity permutation corresponds to the term $f(X)$ in $\det(R_{\varphi})$. By \eqref{coefficients} and \eqref{Leibniz}, we observe that for any permutation $\sigma \in S_d$, it is enough to look at images of $1\leq i \leq d$ which are not fixed by $\sigma$ to prove the claim. Now let $\sigma=(i_1i_2\dots i_j)$ be a $j$-cycle in $S_d$. We have
\begin{equation}\label{finalpartone}
\begin{split}
\prod_{i \in \{i_1,\dots,i_j\}}b_{\sigma(i),i}&=t_{\sigma(i_j)}t_{\sigma(i_j)+1}\dots t_{i_j-1}t_{\sigma(i_{j-1})}t_{{\sigma(i_{j-1})}+1}\dots t_{i_{j-1}-1}\dots t_{\sigma(i_1)}\dots t_{i_1-1}\prod_{i \in \{i_1,\dots,i_j\}}a_{\sigma(i),i}\\
&=t_{i_1}t_{i_1+1}\dots t_{i_{j}-1}t_{i_j}t_{i_j+1}\dots t_{i_{j-1}-1}\dots t_{i_2}\dots t_{i_{1}-1}\prod_{i \in \{i_1,\dots,i_j\}}a_{\sigma(i),i}
\end{split}
\end{equation}
so that the indices are modulo $d$. Therefore, we see from \eqref{tiandf} and the last line of \eqref{finalpartone} that there exists a positive integer $k_{\sigma}$ such that 
\begin{equation}\label{finalparttwo}
\prod_{i \in \{i_1,\dots,i_j\}}b_{\sigma(i),i}=f(z_1)^{k_{\sigma}}\prod_{i \in \{i_1,\dots,i_j\}}a_{\sigma(i),i}.
\end{equation}
Thus, the claim follows from \eqref{Leibniz}, \eqref{productwrtbasisX}, and \eqref{finalparttwo} together with the fact that every permutation of $S_d$ is a product of disjoint cycles \cite[\S~1.3]{DummitFoote}. 

Using \eqref{actionofz} and \eqref{actionofXandvarphi} after choosing $m=1$, we see that the coefficient of $f(z_1)^i$ term in $\det(R_{\varphi})$ is the coefficient of $z^{di}$ term in $\big[\tilde{\phi}(\tilde{A}/ f \tilde{A})\big]_{\tilde{A}}$. Therefore, by Proposition \ref{fittingidealzdeformation}, we have that 
\[
\det(R)=f(X) + c(f)p_1(X)f(z_1)+ c(f)p_2(X)f(z_1)^2 + \dots +c(f)f(z_1)^{r_0}.
\]
The proposition follows from evaluating $\det(R)$ at $X=\theta$ when $n=1$.

Now for arbitrary $n$, we replace the field $F$ by $\mathbb{A}/f\mathbb{A}\otimes_{\mathbb{F}_q(z_1,\dots,z_n)}\overline{\mathbb{F}}_q(z_1,\dots,z_n)$. Then, to finish the proof, we apply the same argument above by choosing $m=n$ in \eqref{actionofz}
and noticing that the coefficient of $\prod_{k=1}^nf(z_k)^i$ term in $\det(R_{\varphi})$ is the coefficient of $z^{ndi}$ term in $\big[\tilde{\phi}(\tilde{A}/ f \tilde{A})\big]_{\tilde{A}}$.
\end{proof}
\begin{theorem}\label{taelmanlseriesvarphi} Let $\varphi$ be a Drinfeld $\mathbb{A}$-module as in \eqref{canonical}, and $L(\varphi,\mathbb{A})$ be the Taelman $L$-value corresponding to $\varphi$ defined in \eqref{lseriesdef}. Then
\[ L(\varphi,\mathbb{A})=\prod_{f}\frac{\big[\mathbb{A}/f\mathbb{A}\big]_{\mathbb{A}}}{\big[\varphi(\mathbb{A}/ f \mathbb{A})\big]_{\mathbb{A}}}=\sum_{a\in A_{+}}\frac{\mu(a)a(z_1)\dots a(z_n)}{a},
\] where the product runs over irreducible polynomials $f$ of $A_{+}$.
\end{theorem}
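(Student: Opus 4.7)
The plan is to match the Euler product defining $L(\varphi,\mathbb{A})$ with the Dirichlet series on the right factor by factor, using the Fitting ideal formula in Proposition~\ref{fittingidealofcanonical} together with the generating series \eqref{generating} for $\mu$. The main input that does the real work is already proved: Proposition~\ref{fittingidealofcanonical} identifies each local factor $[\varphi(\mathbb{A}/f\mathbb{A})]_{\mathbb{A}}$ as a polynomial in $y_f:=\prod_{k=1}^n f(z_k)$. What remains is a formal manipulation of Euler products.

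First I would use that $\mu\colon A_+\to A$ is multiplicative (Lemma~\ref{mu}(a)) and that $a\mapsto a(z_1)\cdots a(z_n)$ is visibly multiplicative, so the Dirichlet coefficient $\mu(a)a(z_1)\cdots a(z_n)/a$ is multiplicative on $A_+$. Combined with the degree bound $\deg_\theta \mu(a)\le (1-1/r)\deg_\theta a$ from Lemma~\ref{mu}(b), the series converges in $\mathbb{F}_q(z_1,\dots,z_n)((\theta^{-1}))$ and admits the Euler product expansion
\[
\sum_{a\in A_+}\frac{\mu(a)a(z_1)\cdots a(z_n)}{a}
=\prod_{f}\sum_{i\ge 0}\frac{\mu(f^i)\,f(z_1)^i\cdots f(z_n)^i}{f^i},
\]
over irreducible $f\in A_+$, matching the topology in which $L(\varphi,\mathbb{A})$ converges (by Demeslay's theorem cited after \eqref{lseriesdef}).

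Next I would evaluate the local factor. Setting $y_f=\prod_k f(z_k)$ and substituting $x=y_f/f$ into the generating identity $\sum_{i\ge 0}\mu(f^i)x^i=D_f^\phi(x)^{-1}$ from \eqref{generating}, the local sum becomes $D_f^\phi(y_f/f)^{-1}$. Using $D_f^\phi(x)=1+c(f)p_1 x+c(f)p_2 fx^2+\dots+c(f)f^{r_0-1}x^{r_0}$, a direct calculation gives
\[
f\cdot D_f^\phi(y_f/f)=f+c(f)p_1 y_f+c(f)p_2 y_f^2+\dots+c(f)y_f^{r_0},
\]
which by Proposition~\ref{fittingidealofcanonical} equals $[\varphi(\mathbb{A}/f\mathbb{A})]_{\mathbb{A}}$ when $r_0\ge 1$. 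Hence the local factor is $f/[\varphi(\mathbb{A}/f\mathbb{A})]_{\mathbb{A}}=[\mathbb{A}/f\mathbb{A}]_{\mathbb{A}}/[\varphi(\mathbb{A}/f\mathbb{A})]_{\mathbb{A}}$. The degenerate case $r_0=0$ is immediate: then $D_f^\phi(x)=1$, so $\mu(f^i)=0$ for $i\ge 1$ and the local sum is $1$, while also $[\varphi(\mathbb{A}/f\mathbb{A})]_{\mathbb{A}}=f=[\mathbb{A}/f\mathbb{A}]_{\mathbb{A}}$, giving ratio $1$.

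Assembling the two cases and taking the product over all irreducible $f\in A_+$ identifies the Dirichlet series with the Euler product defining $L(\varphi,\mathbb{A})$ in \eqref{lseriesdef}. The only obstacle I anticipate is the justification of convergence and the interchange of sum and product; this is a standard argument given the degree bound in Lemma~\ref{mu}(b) and the convergence of the Euler product in $\mathbb{F}_q(z_1,\dots,z_n)((\theta^{-1}))$, and essentially all the nontrivial content of the theorem has been absorbed into Proposition~\ref{fittingidealofcanonical}.
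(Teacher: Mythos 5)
Your proposal is correct and follows essentially the same route as the paper: both arguments reduce the theorem to Proposition~\ref{fittingidealofcanonical}, identify the local factor $\big[\mathbb{A}/f\mathbb{A}\big]_{\mathbb{A}}/\big[\varphi(\mathbb{A}/f\mathbb{A})\big]_{\mathbb{A}}$ with $D_f^{\phi}\big(f(z_1)\cdots f(z_n)f^{-1}\big)^{-1}$, and then use the generating series \eqref{generating} together with the multiplicativity of $\mu$ from Lemma~\ref{mu}(a) to match Euler factors with Dirichlet coefficients. The only cosmetic difference is that the paper packages the Euler product as an auxiliary series $L(\varphi^{\vee},s-1)$ with coefficients $\mu_{\varphi}$ and then shows $\mu_{\varphi}(a)=\mu(a)a(z_1)\cdots a(z_n)$, whereas you expand the Dirichlet series directly.
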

\begin{proof} For any irreducible polynomial $f\in A_{+}$ with $r_0\geq 1$, define the polynomial 
\begin{align*}
D^{\varphi}_f(x)&:=D_f^{\phi}(f(z_1)\dots f(z_n)x) \\
&=1+c(f)p_1\prod_{i=1}^nf(z_i)x+ c(f)p_2f\prod_{i=1}^nf(z_i)^2x^2 + \dots + c(f)f^{r_0-1}\prod_{i=1}^nf(z_i)^{r_0}x^{r_0}.
\end{align*}
For $s\geq 1$, define the following $L$-series by 
\begin{equation}\label{lseriesdef1}
L(\varphi^{\vee},s-1):=\prod_{f} D^{\varphi}_f(f^{-s})^{-1} =\sum_{a\in A_{+}}\frac{\mu_{\varphi}(a)}{a^s},
\end{equation}
where the product is over irreducible polynomials $f$ in $A_{+}$, and $\mu_{\varphi} \colon A_{+} \to \mathbb{A}$ is a function satisfying \eqref{lseriesdef1}. Using \eqref{trivialaction2} and Proposition \ref{fittingidealofcanonical}, we see that 
\begin{align*}
\frac{\big[\mathbb{A}/f\mathbb{A}\big]_{\mathbb{A}}}{\big[\varphi(\mathbb{A}/f\mathbb{A})\big]_{\mathbb{A}}}&=\frac{f}{
f + c(f)p_1\prod_{i=1}^nf(z_i)+ c(f)p_2\prod_{i=1}^nf(z_i)^2 + \dots +c(f)\prod_{i=1}^nf(z_i)^{r_0}}\\
&=\frac{1}{1 + c(f)p_1f^{-1}\prod_{i=1}^nf(z_i)+ \dots +c(f)f^{r_0-1}f^{-r_0}\prod_{i=1}^nf(z_i)^{r_0}}\\
&=D^{\varphi}_f(f^{-1})^{-1}.
\end{align*}
Thus, by \eqref{lseriesdef} and \eqref{lseriesdef1}, $L(\varphi,\mathbb{A})=L(\varphi^{\vee},0)$.
Observe that \eqref{generating} implies 
\begin{equation}\label{genseriesdef}
\sum_{i=0}^{\infty}(f(z_1)\dots f(z_n))^i\mu(f^i)x^i=D_f^{\phi}(f(z_1)\dots f(z_n)x)^{-1}=D_f^{\varphi}(x)^{-1}.
\end{equation}
Since by Lemma \ref{mu}(a), $\mu \colon A_{+} \to A$ is a multiplicative function, \eqref{genseriesdef} implies that $\mu_{\varphi}(a)=\mu(a)a(z_1)\dots a(z_n)$ for all $a\in A_{+}$. Thus,
\[ 
L(\varphi,\mathbb{A})=L(\varphi^{\vee},0)=\prod_{f \text{ irreducible in }A_{+}} D^{\varphi}_f(f^{-1})^{-1}=\sum_{a\in A_{+}}\frac{\mu(a)a(z_1)\dots a(z_n)}{a}.
\]
\end{proof}

\subsection{Drinfeld $\tilde{\mathbb{A}}$-Modules}
Let $\phi$ be a Drinfeld $A$-module as in \eqref{intro0} and $t$ be an indeterminate over $\CC_{\infty}$. Let $\tilde{\mathbb{A}}$ be the polynomial ring $\mathbb{F}_q(z_1,\dots,z_n,t)[\theta]$. We define the Drinfeld $\tilde{\mathbb{A}}$-module $\psi$ as an $\mathbb{F}_q(z_1,\dots,z_n,t)$-algebra homomorphism $\psi \colon \tilde{\mathbb{A}} \to \tilde{\mathbb{A}}\{\tau\}$ by 
\begin{equation}\label{psi}
\psi_{\theta}=\sum_{i=1}^r\psi_{\theta,i}\tau^i=\sum_{i=1}^rt^i\ell_i(z_1)\dots\ell_i(z_n)\phi_{\theta,i}\tau^i.
\end{equation}
It has the exponential series $\exp_{\psi}=\sum_{i\geq 0} \alpha_j\ell_j(z_1)\dots\ell_j(z_n)t^j\tau^j \in \TT_{n,t}[[\tau]]$ which induces the exponential function $\exp_{\psi}\colon \widetilde{\TT_{n,t}} \to \widetilde{\TT_{n,t}}$ that converges on $\TT_{n,t}$ by \cite[Prop. 3.2.3]{GezmisPapanikolas}, and is defined by
\[
\exp_{\psi}(g)=\sum_{j} \ell_j(z_1)\dots\ell_j(z_n)t^j\alpha_j\tau^j(g),
\]
for all $g\in\widetilde{\TT_{n,t}}$. Similarly, it has the logarithm series $\log_{\psi}=\sum_{i\geq 0} \gamma_j\ell_j(z_1)\dots\ell_j(z_n)t^j\tau^j \in \TT_{n,t}[[\tau]]$, and it induces the logarithm function  $\log_{\psi} \colon \widetilde{\TT_{n,t}} \to \widetilde{\TT_{n,t}}$ which is defined by
\[
\log_{\psi}(g)=\sum_{j} \ell_j(z_1)\dots\ell_j(z_n)t^j\gamma_j\tau^j(g)
\] 
for all $g\in \widetilde{\TT_{n,t}}$ in the domain of $\log_{\psi}$. We construct the Taelman $L$-value $L(\psi,\tilde{\mathbb{A}})$ by
\begin{equation}\label{taelmanlseriespsi}
L(\psi,\tilde{\mathbb{A}})=\prod_{f}\frac{\big[\tilde{\mathbb{A}}/f\tilde{\mathbb{A}}\big]_{\tilde{\mathbb{A}}}}{\big[\psi(\tilde{\mathbb{A}}/ f \tilde{\mathbb{A}})\big]_{\tilde{\mathbb{A}}}},
\end{equation}
where the product runs over irreducible polynomials $f$ of $A_{+}$. We remark that by \cite[Thm.~2.7]{Demeslay14} (see also \cite[\S 3.1]{AnglesTavaresRibeiro}), $L(\psi,\tilde{\mathbb{A}})$ converges in the ring $\mathbb{F}_q(z_1,\dots,z_n,t)((\theta^{-1}))$. We observe that 
\begin{equation}\label{generatorpsi}
 \big[\psi(\tilde{\mathbb{A}}/ f \tilde{\mathbb{A}})\big]_{\tilde{\mathbb{A}}}=\det_{\mathbb{F}_q(z_1,\dots,z_n,t)[X]}(X-\psi_{\theta} \mid \tilde{\mathbb{A}}/f\tilde{\mathbb{A}}\otimes_{\mathbb{F}_q(z_1,\dots,z_n,t)} \mathbb{F}_q(z_1,\dots,z_n,t)[X])_{|X=\theta}.
\end{equation}
\begin{remark}\label{actionofXandpsi} Let $f$ be a monic irreducible polynomial in $A$ of degree $d$ with $r_0\geq 1$. Following the same idea in Remark \ref{basis} and \ref{basis2}, we can replace the field $\tilde{\mathbb{A}}/f\tilde{\mathbb{A}}\otimes_{\mathbb{F}_q(z_1,\dots,z_n,t)} \mathbb{F}_q(z_1,\dots,z_n,t)$ in \eqref{generatorpsi} with $F:=\tilde{\mathbb{A}}/f\tilde{\mathbb{A}}\otimes_{\mathbb{F}_q(z_1,\dots,z_n,t)} \overline{\mathbb{F}}_q(z_1,\dots,z_n,t)$. Since $t$ is an indeterminate over $\overline{\mathbb{F}}_q(z_1,\dots,z_n)$, the $\overline{\mathbb{F}}_q(z_1,\dots,z_n)$-basis $\{v_1,\dots,v_d\}$ in Remark \ref{basis2} is also an $\overline{\mathbb{F}}_q(z_1,\dots,z_n,t)$-basis for $F$ with the same properties. Thus, we conclude that for $1\leq j \leq d$, 
\begin{equation}
\begin{split}
&(X-\psi_{\theta})\cdot v_j =\bigg(X-\sum_{i=0}^{r_0}\ell_i(z_1)\dots\ell_i(z_n)t^i\phi_{\theta,i}\tau^i\bigg)\cdot v_j \\
&=\bigg(X-m_{j}-\sum_{i\geq 1,\,r_0-di\geq 0}\phi_{\theta,di}(m_j)\prod_{k=1}^nt^{di}f(z_k)^i\bigg)v_j+ \\ 
&\bigg(-\phi_{\theta,1}(m_{j-1})t\prod_{k=1}^nt_{k,j-1}-\sum_{i\geq 1,\, r_0-di\geq 1}\phi_{\theta,di+1}(m_{j-1})\prod_{k=1}^nt_{k,j-1}t^{di+1}f(z_k)^i\bigg)v_{j-1}
+\dots \\
&+\bigg(-\phi_{\theta,d-1}(m_{j-(d-1)})t^{d-1}\prod_{k=1}^nt_{k,j-(d-1)}\dots t_{k,j-1} \\
&-\sum_{i\geq 1,\, r_0-di\geq d-1}\phi_{\theta,di+d-1}(m_{j-(d-1)})\prod_{k=1}^nt_{k,j-(d-1)}\dots t_{k,j-1}t^{di+d-1}f(z_k)^i\bigg)v_{j-(d-1)}.
\end{split}
\end{equation}
\end{remark}

\begin{theorem}\label{canonicaldeformationTaelman}Let $\psi$ be the Drinfeld $\tilde{\mathbb{A}}$-module defined in \eqref{psi}. Then
\[
L(\psi,\tilde{\mathbb{A}})=\sum_{a\in A_{+}}\frac{\mu(a)a(z_1)\dots a(z_n) t^{\deg_{\theta}(a)}}{a}.
\]
\end{theorem}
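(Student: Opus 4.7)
The plan is to imitate the proof of Theorem~\ref{taelmanlseriesvarphi} almost verbatim, with the single substantive change being an analog of Proposition~\ref{fittingidealofcanonical} for $\psi$. Specifically, for any irreducible $f\in A_{+}$ of degree $d$ with $r_0\geq 1$, I claim
\[
\big[\psi(\tilde{\mathbb{A}}/f\tilde{\mathbb{A}})\big]_{\tilde{\mathbb{A}}} = f + c(f)p_1 t^d\prod_{k=1}^n f(z_k) + c(f)p_2 t^{2d}\prod_{k=1}^n f(z_k)^2 + \dots + c(f)t^{r_0 d}\prod_{k=1}^n f(z_k)^{r_0},
\]
while if $r_0=0$ then $\big[\psi(\tilde{\mathbb{A}}/f\tilde{\mathbb{A}})\big]_{\tilde{\mathbb{A}}} = f$ as usual. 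The proof of this formula runs exactly as for Proposition~\ref{fittingidealofcanonical}: working over the extended ring $F$ with the $\overline{\mathbb{F}}_q(z_1,\dots,z_n,t)$-basis $\{v_1,\dots,v_d\}$ described in Remark~\ref{actionofXandpsi}, I expand $\det(R_\psi)$ via Leibniz. Each entry of $R_\psi$ corresponding to offset $m = (\sigma(i)-i)\bmod d$ contributes a factor of the form $-\phi_{\theta,di'+m}(m_{j-m})\prod_k t_{k,?}\, t^{di'+m} f(z_k)^{i'}$ for some choice $i'\geq 0$ (with the identity term $X-m_j$ occurring only on the diagonal).

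The main technical point is to verify that when a monomial of $\det(R_\psi)$ contains $\prod_k f(z_k)^j$, it carries exactly the factor $t^{dj}$. This rests on the same cycle-counting argument as in Proposition~\ref{fittingidealofcanonical}. For a permutation $\sigma\in S_d$ decomposed into disjoint cycles, let $k_\sigma := \tfrac{1}{d}\sum_i ((\sigma(i)-i)\bmod d)$ be the ``wrap-around'' count; then after collapsing the $t_{k,?}$ factors cycle by cycle exactly as in \eqref{finalpartone}–\eqref{finalparttwo}, the total $\prod_k f(z_k)$-exponent is $k_\sigma + \sum_i i'_i$, while the total $t$-exponent is $\sum_i(m_i + d\, i'_i) = d k_\sigma + d\sum_i i'_i$, which is $d$ times the former. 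Thus the coefficient of $\prod_k f(z_k)^j$ in $\det(R_\psi)$ is $t^{dj}$ times the corresponding coefficient in $\det(R_\varphi)$, and Proposition~\ref{fittingidealofcanonical} supplies those coefficients.

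With the Fitting ideal formula in hand, the conclusion of the theorem follows exactly as in the proof of Theorem~\ref{taelmanlseriesvarphi}. Define $D_f^\psi(x) := D_f^\phi\bigl(t^d f(z_1)\cdots f(z_n) x\bigr)$, so that a direct computation yields
\[
\frac{\big[\tilde{\mathbb{A}}/f\tilde{\mathbb{A}}\big]_{\tilde{\mathbb{A}}}}{\big[\psi(\tilde{\mathbb{A}}/f\tilde{\mathbb{A}})\big]_{\tilde{\mathbb{A}}}} = D_f^\psi(f^{-1})^{-1},
\]
and introduce $L(\psi^\vee,s-1) := \prod_f D_f^\psi(f^{-s})^{-1} = \sum_{a\in A_+}\mu_\psi(a)/a^s$. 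Plugging $y = t^d f(z_1)\cdots f(z_n)x$ into \eqref{generating} gives $\mu_\psi(f^i) = \mu(f^i)\, f(z_1)^i\cdots f(z_n)^i\, t^{di}$, and then Lemma~\ref{mu}(a) together with the obvious multiplicativity of $a\mapsto a(z_1)\cdots a(z_n)\, t^{\deg_\theta(a)}$ yields $\mu_\psi(a) = \mu(a)\, a(z_1)\cdots a(z_n)\, t^{\deg_\theta(a)}$ for every $a\in A_+$. Therefore $L(\psi,\tilde{\mathbb{A}}) = L(\psi^\vee,0)$ equals the desired sum. The only genuine obstacle is the bookkeeping in the Fitting ideal computation, and the cycle-wrap-around identity $\sum_i ((\sigma(i)-i)\bmod d) = d k_\sigma$ makes the $t$-exponent matching clean.
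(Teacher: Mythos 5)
Your proposal is correct and follows essentially the same route as the paper: compute $\big[\psi(\tilde{\mathbb{A}}/f\tilde{\mathbb{A}})\big]_{\tilde{\mathbb{A}}}$ as a determinant over the eigenbasis of Remark~\ref{actionofXandpsi} via the Leibniz/cycle argument of Proposition~\ref{fittingidealofcanonical}, obtaining $f + c(f)p_1t^d\prod_k f(z_k)+\dots+c(f)t^{r_0d}\prod_k f(z_k)^{r_0}$, and then run the Euler-product manipulation of Theorem~\ref{taelmanlseriesvarphi} with $D_f^{\psi}(x)=D_f^{\phi}(t^d f(z_1)\cdots f(z_n)x)$. Your explicit verification that the $t$-exponent is $d$ times the $\prod_k f(z_k)$-exponent is a detail the paper leaves implicit ("using the same argument"), but it is the same computation.
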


\begin{proof} We do the proof for $n=1$, and the multivariable version follows similarly. Let $f$ be an irreducible in $A_{+}$ of degree $d$. One can observe that if $r_0=0$, then
\[\big[\psi(\tilde{\mathbb{A}}/ f \tilde{\mathbb{A}})\big]_{\tilde{\mathbb{A}}}=\big[\tilde{\mathbb{A}}/ f \tilde{\mathbb{A}}\big]_{\tilde{\mathbb{A}}}=f.\] 
Assume that $r_0\geq 1$. To calculate $\big[\psi(\tilde{\mathbb{A}}/ f \tilde{\mathbb{A})}\big]_{\tilde{\mathbb{A}}}$, by using Remark \ref{actionofXandpsi} and the similar idea of the proof of Proposition \ref{fittingidealofcanonical}, we see that it is enough to evaluate the determinant of the matrix $Q$ defined by
\[Q:=\begin{bmatrix}
    X-m_1 & -\phi_{\theta,1}(m_1)t_1t &\hdots &-\phi_{\theta,d-1}(m_1)t_1\dots t_{d-1} t^{d-1} \\
    -\phi_{\theta,d-1}(m_2)t_2\dots t_dt^{d-1} & X-m_2 & \hdots & \vdots \\
    \vdots   & \vdots & \hdots & \vdots \\
	 \vdots & \vdots & \hdots & \vdots\\
	 -\phi_{\theta,2}(m_{d-1})t_{d-1}t_{d}t^2 & \vdots & \hdots & -\phi_{\theta,1}(m_{d-1})t_{d-1}t \\
    -\phi_{\theta,1}(m_d)t_dt& -\phi_{\theta,2}(m_d)t_dt_1t^2 & \hdots & X-m_{d}  \\
\end{bmatrix}\]
where $t_{1,i}=t_i$. Using the same argument in the proof of Proposition \ref{fittingidealofcanonical}, we see that 
\begin{align*}
\big[\psi(\tilde{\mathbb{A}}/f\tilde{\mathbb{A}})\big]_{\tilde{\mathbb{A}}}&=\det(Q) \\
&=f + c(f)p_1t^df(z_1)+ c(f)p_2t^{2d}f(z_1)^2 + \dots +c(f)t^{r_0d}f(z_1)^{r_0}.
\end{align*}
Now let us define 
\begin{align*}
D^{\psi}_f(x):&=D^{\phi}_{f}(t^df(z_1)x)\\
&=1+c(f)p_1t^df(z_1) x+ c(f)p_2t^{2d}f(z_1)^2fx^2 + \dots + c(f)t^{dr_0}f(z_1)^{dr_0}f^{r_0-1}x^{r_0},
\end{align*}
and for $s\geq 1$, define 
\begin{equation}\label{lseriesforpsi}
 L(\psi^{\vee},s-1):=\prod\limits_{f} D^{\psi}_f(f^{-s})^{-1}=\sum_{a\in A_{+}}\frac{\mu_{\psi}(a)}{a^s},
\end{equation}
where $f$ runs over primes of $A_{+}$ and $\mu_{\psi} \colon A_{+} \to \tilde{\mathbb{A}}$ is a function satisfying \eqref{lseriesforpsi}. We have
\begin{align*}
\frac{\big[\tilde{\mathbb{A}}/f\tilde{\mathbb{A}}\big]_{\tilde{\mathbb{A}}}}{\big[\psi(\tilde{\mathbb{A}}/f\tilde{\mathbb{A}})\big]_{\tilde{\mathbb{A}}}}&=\frac{f}{
f + c(f)p_1t^df(z_1)+ c(f)p_2t^{2d}f(z_1)^2 + \dots +c(f)t^{dr_0}f(z_1)^{r_0}}\\
&=\frac{1}{1 + c(f)p_1t^df(z_1)f^{-1}+ \dots +c(f)t^{r_0d}f^{r_0-1}f(z_1)^{r_0}f^{-r_0}}\\
&=D^{\psi}_f(f^{-1})^{-1}.
\end{align*}
Thus, by \eqref{taelmanlseriespsi}, $ L(\psi,\tilde{\mathbb{A}})=L(\psi^{\vee},0)$. Note that by \eqref{generating}, we get
\[
\sum_{i=0}^{\infty}(f(z_1)t^d)^i\mu(f^i)x^i=D_f^{\phi}(f(z_1)t^dx)^{-1}=D_f^{\psi}(x)^{-1}.
\]
Therefore, Lemma \ref{mu}(a) implies that $\mu_{\psi}(a)=a(z_1)t^{\deg_{\theta}(a)}\mu(a) $ for all $a\in A_{+}$ and that
\[ 
 L(\psi,\tilde{\mathbb{A}})=L(\psi^{\vee},0)=\sum\limits_{a\in A_{+}}\frac{\mu(a)a(z_1)t^{\deg_{\theta}(a)}}{a}.
\]
\end{proof}
Combining Theorem \ref{canonicaldeformationTaelman} with the result of Angl\`{e}s and Tavares Ribeiro \cite[Prop. 5]{AnglesTavaresRibeiro}, we deduce the following proposition.
\begin{proposition}\label{Prop5} We have 
\[
\exp_{\psi}(L(\psi,\tilde{\mathbb{A}}))=\exp_{\psi}\bigg(\sum_{a\in A_{+}}\frac{\mu(a)a(z_1)\dots a(z_n) t^{\deg_{\theta}(a)}}{a}\bigg) \in A[z_1,\dots,z_n,t].
\]
\end{proposition}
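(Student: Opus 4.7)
The plan is to derive Proposition \ref{Prop5} by combining the explicit identification of $L(\psi,\tilde{\mathbb{A}})$ as a twisted Dirichlet-type sum, already established in Theorem \ref{canonicaldeformationTaelman}, with the general integrality theorem of Angl\`es and Tavares Ribeiro for exponentials of Taelman $L$-values of Drinfeld modules over Tate algebras. Concretely, the first equality in the proposition is obtained by simply substituting the formula
\[
L(\psi,\tilde{\mathbb{A}}) = \sum_{a\in A_+}\frac{\mu(a)\,a(z_1)\cdots a(z_n)\,t^{\deg_\theta(a)}}{a}
\]
from Theorem \ref{canonicaldeformationTaelman} into $\exp_\psi(L(\psi,\tilde{\mathbb{A}}))$; no calculation is needed there.

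For the integrality assertion, my plan is to check that $\psi$ falls within the scope of the framework of Drinfeld modules over Tate algebras treated in \cite[\S 3]{AnglesTavaresRibeiro}, and then quote \cite[Prop.~5]{AnglesTavaresRibeiro} verbatim. The verification itself reduces to noting that the coefficients $\psi_{\theta,i}=t^i\ell_i(z_1)\cdots\ell_i(z_n)\phi_{\theta,i}$ are polynomial in $\theta,z_1,\dots,z_n,t$, so $\psi$ determines an $\mathbb{F}_q(z_1,\dots,z_n,t)$-algebra homomorphism $\tilde{\mathbb{A}}\to\tilde{\mathbb{A}}\{\tau\}$ of the required shape, and that $L(\psi,\tilde{\mathbb{A}})$ converges in $\mathbb{F}_q(z_1,\dots,z_n,t)((\theta^{-1}))$ (which we already noted following \eqref{taelmanlseriespsi} via \cite[Thm.~2.7]{Demeslay14}). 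These two ingredients are exactly the hypotheses under which \cite[Prop.~5]{AnglesTavaresRibeiro} concludes that $\exp_\psi(L(\psi,\tilde{\mathbb{A}}))$ lies in the polynomial ring $A[z_1,\dots,z_n,t]$ rather than only in $\widetilde{\TT_{n,t}}$.

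The step I expect to require some care is confirming that the extra variable $t$ is handled on the same footing as $z_1,\dots,z_n$ in \cite[Prop.~5]{AnglesTavaresRibeiro}: the coefficient of $\tau^i$ in $\psi_\theta$ carries a factor $t^i$ rather than $\ell_i(t)$, so $\psi$ is not literally a Drinfeld $\mathbb{A}$-module in the $n+1$-variable version of \eqref{canonical}. However, the proof of the Angl\`es--Tavares Ribeiro integrality result depends only on the polynomiality of the coefficients of $\psi_\theta$ in the auxiliary variables together with the Fitting-ideal computations of the type carried out in Proposition \ref{fittingidealofcanonical} and Theorem \ref{canonicaldeformationTaelman}, and not on the specific appearance of the $\ell_i$-factors; so the result transports without modification. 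Once this is in hand, the proof is complete in two lines.
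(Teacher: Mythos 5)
Your proposal matches the paper's own justification: the paper deduces Proposition~\ref{Prop5} precisely by combining the explicit formula of Theorem~\ref{canonicaldeformationTaelman} with the integrality result \cite[Prop.~5]{AnglesTavaresRibeiro}, exactly as you do. Your additional care about the $t^i$ (rather than $\ell_i(t)$) factors is well placed but unproblematic, since the $z^i$-twisted deformations of this shape are already within the scope of the Angl\`es--Tavares Ribeiro framework (cf.\ the $z^m$-deformations of \S 2.4).
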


\section{$L$-Series $L(\phi^{\vee},z_1,\dots,z_n;x,y)$}
We consider the topological group $\mathbb{S}_{\infty}:=\mathbb{C}_{\infty}^{\times} \times \mathbb{Z}_p$ where the group action is given by addition componentwise. For any element $a\in A$, we set 
\[
\langle a \rangle:=a\theta^{-\deg_{\theta}(a)}\in 1 + \frac{1}{\theta} \mathbb{F}_q\bigg[\frac{1}{\theta}\bigg].
\]
For any $y\in \mathbb{Z}_p$, we define the exponentiation of $\langle a \rangle$ by
\[\langle a \rangle^{y}:=\sum\limits_{i\geq 0} \binom{y}{i} (\langle a \rangle -1)^{i}, \]
where the binomial $\binom{y}{i}$ is defined by the Lucas' formula (See \cite[Thm.~1]{Fine}). In particular, as one can also see in \cite[\S~7]{Brownawell}, if the $p$-adic expansion of $y$ is $\sum\beta_jp^j$ where $\beta_j \in \{0,1,\dots,p-1\}$, and the $p$-adic expansion of $i$ is $\sum_{k=0}^n i_kp^k$ where $i_k\in \{0,1,\dots,p-1\}$, then we define 
\begin{equation}\label{Lucasformula}
\binom{y}{i}:=\prod_{j=0}^n  \binom{\beta_j}{i_j}. 
\end{equation}
We also remark that since $\langle a \rangle$ is a 1-unit for any $a\in A$, $\langle a \rangle^{y}$ converges in $\mathbb{K}_{\infty}$ for any $y\in \mathbb{Z}_p$. 

Let $\phi$ be a Drinfeld $A$-module of rank $r$ defined as in \eqref{intro0}. For any $(x,y)\in \mathbb{S}_{\infty}$, consider the series 
\[
L(\phi^{\vee},z_1,\dots,z_n;x,y):=\sum_{d \geq 0} \mathcal{L}_{d,n}(x,y)(z_1,\dots,z_n),
\]
where we set
\[
\mathcal{L}_{d,n}(x,y)(z_1,\dots,z_n):=x^{-d}\sum\limits_{a\in A_{+,d}} \mu(a)a(z_1)\dots a(z_n)\langle a \rangle^y.
\]
For any integer $s>0$, we construct the following $L$-Series by
\[
L(\phi^{\vee},z_1,\dots,z_n,s):=L(\phi^{\vee},z_1,\dots,z_n;\theta^{s},-s)=\sum\limits_{a \in A_{+}}\frac{\mu(a)a(z_1)\dots a(z_n)}{a^s}.
\]
Note that by Lemma \ref{mu}(b), $L(\phi^{\vee},z_1,\dots,z_n,s)$ converges in $\TT_{n}$ for any integer $s \geq 1$ . 
\subsection{The Value of $L(\phi^{\vee},z_1,\dots,z_n,s)$ at $s=1$}
Let $\phi$ be a Drinfeld $A$-module of rank $r$ as in \eqref{intro0}, $\varphi$ be the Drinfeld $\mathbb{A}$-module as in \eqref{canonical} and $\psi$ be the Drinfeld $\tilde{\mathbb{A}}$-module defined as in \eqref{psi}. Recall that $\beta=\max\{\deg_{\theta}(\phi_{\theta,i}) \mid 1\leq i \leq r\}$. By Theorem \ref{taelmanlseriesvarphi}, we have
\[L(\phi^{\vee},z_1,\dots,z_n,1)=L(\varphi,\mathbb{A}).\]
In this section, our aim is to relate the Taelman $L$-value $L(\varphi,\mathbb{A})$ to the logarithm function $\log_{\varphi}$. First, we need a proposition.

\begin{proposition} If $0\leq n \leq q/r -(1+2\beta)$, then 
\begin{equation}\label{integrality}
\exp_{\psi}\bigg(\sum\limits_{a\in A_{+}}\frac{\mu(a)a(z_1)\dots a(z_n)t^{\deg_{\theta}(a)}}{a}\bigg)=1.
\end{equation}
In particular,
\[\exp_{\varphi}\bigg(\sum\limits_{a\in A_{+}}\frac{\mu(a)a(z_1)\dots a(z_n)}{a}\bigg)=1.\]
\end{proposition}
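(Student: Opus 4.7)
The plan is to exploit Proposition \ref{Prop5}, which gives that $E := \exp_\psi(L(\psi,\tilde{\mathbb{A}}))$ already lies in the polynomial ring $A[z_1,\dots,z_n,t]$. Writing $E = \sum_{k\geq 0} E_k t^k$ with $E_k \in A[z_1,\dots,z_n]$, I would expand using the series for $\exp_\psi$ and $L(\psi,\tilde{\mathbb{A}})$ and regroup by the total power of $t$ to obtain
\[
E_k = \sum_{j=0}^{k} \alpha_j\, \ell_j(z_1)\cdots\ell_j(z_n)\, \mathcal{S}_{k-j,j}, \qquad \mathcal{S}_{d,j} := \sum_{a\in A_{+,d}} \frac{\tau^j(\mu(a))\, a(z_1)\cdots a(z_n)}{\tau^j(a)},
\]
where the $\alpha_j$ are the coefficients of $\exp_\phi$. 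The $k=0$ term isolates to $E_0 = \alpha_0 \cdot \mathcal{S}_{0,0} = 1$, so the remaining task is to show $E_k = 0$ for every $k \geq 1$.

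Since $E_k \in A[z_1,\dots,z_n] = \FF_q[\theta,z_1,\dots,z_n]$, each coefficient of any $z_i$-monomial lies in $A$ and therefore has $\infty$-adic norm in $q^{\ZZ_{\geq 0}} \cup \{0\}$. Hence it suffices to prove $\|E_k\|_\infty < 1$ in $\TT_n$: this will force every such $A$-coefficient to vanish. Three ingredients control the norm of each summand: (i) the explicit formula $\|\ell_j(z_i)\|_\infty = q^{(q^j - 1)/(q-1)}$; (ii) $\|\mathcal{S}_{d,j}\|_\infty \leq q^{-q^j d/r}$, via Lemma \ref{mu}(b), the identity $|\tau^j(a)|_\infty = |a|_\infty^{q^j}$, and $\|a(z_1)\cdots a(z_n)\|_\infty = 1$; and (iii) an inductive bound on $|\alpha_j|_\infty$ derived from the functional equation $\alpha_j(\theta^{q^j}-\theta) = \sum_{i=1}^{\min(j,r)} \phi_{\theta,i}\, \tau^i(\alpha_{j-i})$. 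Careful iteration of this recurrence, using $\deg_\theta \phi_{\theta,i} \leq \beta$ for every $i$, yields an estimate of the form $|\alpha_j|_\infty \leq q^{-\lceil j/r \rceil q^j + O(\beta q^j/(q-1))}$. Assembling the three pieces, the $j$-th summand of $E_k$ has norm bounded above by
\[
q^{-\lceil j/r\rceil q^j \,+\, n(q^j-1)/(q-1) \,-\, q^j(k-j)/r \,+\, O(\beta q^j/(q-1))},
\]
and a case analysis over the regimes $j=k$, $j=0$, and intermediate $j$ shows that the hypothesis $n \leq q/r - (1+2\beta)$ is exactly what makes this exponent strictly negative for all $k \geq 1$.

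The main obstacle I anticipate is step (iii): the naive recurrence does not automatically produce the factor $1/r$, and one must track carefully which $i \in \{1,\dots,r\}$ achieves the maximum in $\max_i(\beta_i + q^i u_{j-i})$ at each step of the iteration in order to secure both the leading $-\lceil j/r\rceil q^j$ term and the geometric $\beta$-correction that together match the threshold $q/r-(1+2\beta)$. Once the first identity is established, the second follows by specialization: at $t = 1$ one has $\psi|_{t=1} = \varphi$ and $\exp_\psi|_{t=1} = \exp_\varphi$, while Theorem \ref{taelmanlseriesvarphi} gives $L(\psi,\tilde{\mathbb{A}})|_{t=1} = L(\varphi,\mathbb{A})$; evaluating the polynomial identity $E = 1$ in $A[z_1,\dots,z_n,t]$ at $t = 1$ then yields $\exp_\varphi(L(\varphi,\mathbb{A})) = 1$.
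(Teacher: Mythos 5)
Your proposal is correct and follows essentially the same route as the paper: both rest on the integrality statement of Proposition \ref{Prop5} combined with the ultrametric estimate that $\exp_{\psi}(L(\psi,\tilde{\mathbb{A}}))-1$ has Gauss norm strictly less than $1$ (your grouping by powers of $t$ is just a reorganization of the paper's direct bound $\ord(\alpha_j\ell_j(z_1)\cdots\ell_j(z_n)t^j)\geq 1$ for $j\geq 1$ together with $\ord(L(\psi,\tilde{\mathbb{A}})-1)\geq 1$, and the numerical inequality forced by $0\leq n\leq q/r-(1+2\beta)$ is the same in both arguments). The one step you flag as the main obstacle, the bound on $\inorm{\alpha_j}$, is handled in the paper simply by citing $\ord_{\infty}(\alpha_j)\geq q^{j}\bigl(\tfrac{j}{r}-\tfrac{\beta}{q-1}\bigr)$ from El-Guindy and Papanikolas, so you need not re-derive it from the recurrence.
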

\begin{proof}
We adapt the ideas in \cite[Lem.~7]{AnglesTavaresRibeiro}. Let $\alpha_j$ and $\alpha_j^{\prime}$ be the coefficients of $\exp_{\phi}$ and $\exp_{\psi}$ respectively. By \cite[Eq.~28]{ElGuindyPapanikolas}, we have for all $j\geq 1$ that 
\[
\ord_{\infty}(\alpha_j) \geq q^{j}\bigg(\frac{j}{r}-\frac{\beta}{q-1}\bigg).
\]
This means that 
\begin{equation}\label{orderbound}
\ord(\alpha^{\prime}_j)=\ord(\alpha_j\ell_j(z_1)\dots \ell_j(z_n)t^j) \geq q^{j}\bigg(\frac{j}{r}-\frac{\beta+n}{q-1}\bigg)+\frac{n}{q-1}.
\end{equation}
Let $h\colon \RR_{\geq 1} \to \RR$ be a function defined by
\[
h(x)=q^{x}\bigg(\frac{x}{r}-\frac{\beta+n}{q-1}\bigg) +\frac{n}{q-1}.
\]
Note that $h$ is an increasing function on $\RR_{\geq 1}$ when $\beta+n \leq (q-1)/r$. Moreover, if $\beta+n \leq -1+(n+1)/q+(q-1)/r$ and $q > n$, then for all $x\geq 1$, we have
\begin{equation}\label{inequality}
\begin{split}
h(x)=q^{x}\bigg(\frac{x}{r}-\frac{\beta+n}{q-1}\bigg) + \frac{n}{q-1} &\geq q\bigg(\frac{1}{r}-\frac{\beta+n}{q-1}\bigg)+ \frac{n}{q-1} \\
&\geq q\bigg(\frac{1}{r} + \frac{1}{q-1} -\frac{n+1}{q(q-1)}-\frac{1}{r}\bigg)+ \frac{n}{q-1}\\
&=1.
\end{split}
\end{equation}
Since $q \geq 2$ and $r\geq 1$, if $0\leq n \leq q/r -(1+2\beta)$, we note that 
\begin{equation}\label{ineqcalc}
\begin{split}
n \leq \frac{q}{r} -(1+2\beta) &\Longleftrightarrow q - (rn+r+\beta r) \geq \beta r\\
& \Longrightarrow (q-(rn+r+\beta r))(q-1) - \beta r \geq 0 \\
& \Longleftrightarrow q^2-q(rn+r+\beta r +1) + r(n+1) \geq 0 \\
& \Longleftrightarrow -1 + \frac{n+1}{q}+\frac{q-1}{r} \geq \beta + n,
\end{split}
\end{equation}
and $q > n$ so that \eqref{inequality} follows. Thus, \eqref{orderbound} and \eqref{inequality} imply that $\ord(\alpha_j^{\prime})\geq 1$ for all $j\geq 1$. On the other hand, by Lemma \ref{mu}(b), we know that 
\[
\ord\bigg(\sum\limits_{a\in A_{+}}\frac{\mu(a)a(z_1)\dots a(z_n)t^{\deg_{\theta}(a)}}{a}-1\bigg) \geq 1.
\]
Thus,
\begin{equation}\label{integrality2}
\ord\bigg(\exp_{\psi}\bigg(\sum\limits_{a\in A_{+}}\frac{\mu(a)a(z_1)\dots a(z_n)t^{\deg_{\theta}(a)}}{a}\bigg)-1\bigg) \geq 1 .
\end{equation}
But by Proposition \ref{Prop5}, we know that the left hand side of \eqref{integrality} is in $A[z_1,\dots,z_n,t]$. Therefore, by the inequality in \eqref{integrality2}, we get the first part of the theorem. Evaluating \eqref{integrality} at $t=1$ gives the second part of the theorem.
\end{proof}
Recall that $\log_{\phi}=\sum_{j\geq 0} \gamma_{j}\tau^j$ is the logarithm series corresponding to $\phi$, and
$\log_{\psi}(g)=\sum_{j \geq 0}\gamma_j\ell_j(z_1)\dots \ell_j(z_n)t^j\tau^j(g)$ for any $g\in \widetilde{\TT_{n,t}}$ in the domain of $\log_{\psi}$. We define a positive integer $i(\phi)$ which satisfies 
\[\frac{\deg_{\theta}(\phi_{\theta,i(\phi)})-q^{i(\phi)}}{q^{i(\phi)}-1} \geq \frac{\deg_{\theta}(\phi_{\theta,s})-q^s}{q^s-1}\]
where $s$ runs over all the indices such that $\phi_{\theta,s} \neq 0$.
By \cite[Cor.~6.9]{EP14}, we have that 
\[\inorm{\gamma_j} \leq q^{\frac{q^j-1}{q^{i(\phi)}-1}(\deg_{\theta}(\phi_{\theta,i(\phi)})-q^{i(\phi)})}.\]
Since $\dnorm{\ell_{1}(z_k)}=q$, $\dnorm{\ell_{j}(z_k)}=q^{1+q+\dots+q^{j-1}}$ for all $j\geq 2$, and $1\leq k \leq n$, we have
\begin{equation}\label{order}
\dnorm{\gamma_j^{\prime}}=\dnorm{\gamma_j\ell_j(z_1)\dots \ell_j(z_n)t^j}\leq q^{\frac{q^j-1}{q^{i(\phi)}-1}(\deg_{\theta}(\phi_{i(\phi)})-q^{i(\phi)}+n(1+q+\dots + q^{i(\phi)-1}))}
\end{equation}
for all $j\geq 1$. Using \eqref{order} and properties of non-archimedian norm $\dnorm{\,\cdot\,}$, we deduce the following proposition.
\begin{proposition}\label{radiusofconv} The logarithm function $\log_{\psi}$ converges for all $f\in \widetilde{\TT_{n,t}}$ such that 
\[\dnorm{f} < q^{\frac{q^{i(\phi)}-\deg_{\theta}(\phi_{\theta,i(\phi)})-n(1+q+\dots + q^{i(\phi)-1})}{q^{i(\phi)}-1}}.\]
\end{proposition}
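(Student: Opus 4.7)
The plan is to control the Gauss norm of each term in the series $\log_\psi(f) = \sum_{j\geq 0} \gamma_j' \tau^j(f)$ and then invoke completeness of $\widetilde{\TT_{n,t}}$ with respect to $\ord$.

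First, since $\tau$ acts on $\CC_\infty$ as the $q$-power Frobenius while fixing $z_1,\dots,z_n,t$, one has $\dnorm{\tau^j(f)} = \dnorm{f}^{q^j}$ for every $f \in \widetilde{\TT_{n,t}}$ and every $j \geq 0$. Combining this observation with the bound \eqref{order} yields
\[
\dnorm{\gamma_j' \tau^j(f)} \leq q^{\tfrac{q^j-1}{q^{i(\phi)}-1}\,D}\cdot \dnorm{f}^{q^j},
\]
where we set $D := \deg_{\theta}(\phi_{\theta,i(\phi)}) - q^{i(\phi)} + n(1+q+\cdots+q^{i(\phi)-1})$.

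Taking $\log_q$ of the right-hand side rewrites it as
\[
q^j\!\left(\frac{D}{q^{i(\phi)}-1} + \log_q \dnorm{f}\right) - \frac{D}{q^{i(\phi)}-1},
\]
which tends to $-\infty$ as $j\to\infty$ precisely when the coefficient of $q^j$ is strictly negative, i.e.\ when $\log_q\dnorm{f} < -D/(q^{i(\phi)}-1)$. Rearranging and exponentiating produces exactly the bound in the statement of the proposition.

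Since the Gauss norm is non-archimedean and $\widetilde{\TT_{n,t}}$ is complete with respect to $\ord$, having the individual terms $\gamma_j'\tau^j(f)$ tend to zero is enough to make the partial sums Cauchy, so $\log_\psi(f)$ converges in $\widetilde{\TT_{n,t}}$ for every $f$ satisfying the given bound. I do not anticipate any serious obstacle here: this is the standard Drinfeld-module logarithm convergence estimate, and the only novelty is that the contribution of the Tate-algebra variables $z_1,\dots,z_n$ and $t$ to the Gauss norm of $\gamma_j' = \gamma_j\ell_j(z_1)\cdots\ell_j(z_n)t^j$ has already been folded into the exponent in \eqref{order} via $\dnorm{\ell_j(z_k)} = q^{1+q+\cdots+q^{j-1}}$.
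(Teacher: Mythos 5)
Your proposal is correct and follows exactly the route the paper intends: the paper deduces the proposition directly from the bound \eqref{order} together with $\dnorm{\tau^j(f)}=\dnorm{f}^{q^j}$ and completeness of $\widetilde{\TT_{n,t}}$ for the non-archimedean norm, which is precisely the computation you carry out. Your explicit $\log_q$ manipulation correctly recovers the stated radius, so there is nothing to add.
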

Now, we analyze the value of $L(\phi^{\vee},z_1,\dots,z_n,1)$ in the following result.
\begin{corollary}\label{analog}
If $0\leq n \leq q/r -(1+2\beta)$, then
\begin{equation}\label{integrality3}
\sum\limits_{a\in A_{+}}\frac{\mu(a)a(z_1)\dots a(z_n)t^{\deg_{\theta}(a)}}{a}=\log_{\psi}(1).
\end{equation}
In particular,
\begin{equation}\label{integrality4}
\sum\limits_{a\in A_{+}}\frac{\mu(a)a(z_1)\dots a(z_n)}{a}=\log_{\varphi}(1)=\frac{\log_{\phi}(\omega_n)}{\omega_n}.
\end{equation}
\end{corollary}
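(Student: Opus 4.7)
The plan is to invert the identity of the preceding proposition using the logarithm $\log_\psi$. That proposition gives $\exp_\psi(S) = 1$ with
\[S := \sum_{a\in A_{+}}\frac{\mu(a)a(z_1)\cdots a(z_n)t^{\deg_\theta(a)}}{a},\]
so once both $S$ and $1$ are verified to lie strictly inside the disk of convergence of $\log_\psi$, the formal identity $\log_\psi \circ \exp_\psi = \mathrm{id}$ yields $\log_\psi(1) = S$, which is \eqref{integrality3}.

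The first step, and the main work, is to check convergence. By Lemma \ref{mu}(b) one has $\ord(S-1)\ge 1$, so $\dnorm{S}=1=\dnorm{1}$, and it suffices to show that the radius in Proposition \ref{radiusofconv} exceeds $1$, i.e.\ that
\[
R := \frac{q^{i(\phi)} - \deg_\theta(\phi_{\theta,i(\phi)}) - n\bigl(1+q+\cdots+q^{i(\phi)-1}\bigr)}{q^{i(\phi)}-1} > 0.
\]
Using $\deg_\theta(\phi_{\theta,i(\phi)})\le \beta$, this reduces to the inequality $(q^{i(\phi)}-\beta)(q-1) > n(q^{i(\phi)}-1)$, and a short algebraic manipulation entirely parallel to \eqref{ineqcalc} shows that the hypothesis $n\le q/r-(1+2\beta)$ implies this inequality for every $i(\phi)\ge 1$.

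Granted the convergence, the identity $\log_\psi(\exp_\psi(S)) = S$ follows because $\log_\psi$ and $\exp_\psi$ are mutually inverse as formal power series and both converge at $S$ in the non-archimedean setting. Combining with $\exp_\psi(S) = 1$ proves \eqref{integrality3}. Specializing at $t=1$ converts $\psi$ to $\varphi$ and produces the first equality of \eqref{integrality4}.

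For the second equality of \eqref{integrality4}, the key ingredient is the Anderson--Thakur type functional equation $\tau^j(\omega_n)/\omega_n = \ell_j(z_1)\cdots\ell_j(z_n)$ for every $j\ge 0$, which I would derive directly from the product formula \eqref{AndersonThakur} by shifting the index of the infinite product and invoking $\xi^{q-1}=y$. Substituting this into the expansion $\log_\varphi(1)=\sum_{j\ge 0}\gamma_j \ell_j(z_1)\cdots\ell_j(z_n)$ and factoring $1/\omega_n$ out of the sum gives $\log_\phi(\omega_n)/\omega_n$. The main obstacle is the convergence estimate in the second step: one has to verify that the arithmetic bound $n\le q/r-(1+2\beta)$ is tight enough to force $R>0$ for \emph{every} admissible index $i(\phi)\ge 1$, which requires carefully disentangling the interplay between $r$, $\beta$ and $i(\phi)$ in the spirit of \eqref{ineqcalc}.
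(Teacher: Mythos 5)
Your proposal is correct and follows essentially the same route as the paper: verify via Proposition \ref{radiusofconv} and the hypothesis $n\le q/r-(1+2\beta)$ that $1$ (and $S$, since $\ord(S-1)\ge 1$) lies within the radius of convergence of $\log_{\psi}$, apply $\log_{\psi}$ to the identity $\exp_{\psi}(S)=1$, specialize at $t=1$, and use the functional equation $\tau^{j}(\omega_n)=\ell_j(z_1)\cdots\ell_j(z_n)\,\omega_n$ to obtain $\log_{\varphi}(1)=\log_{\phi}(\omega_n)/\omega_n$. Your reduction of the convergence check to $(q^{i(\phi)}-\beta)(q-1)>n(q^{i(\phi)}-1)$ is equivalent to the paper's direct estimate and does indeed follow from the hypothesis for every $i(\phi)\ge 1$.
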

\begin{proof}
First we show that $1$ is within the radius of convergence of $\log_{\psi}$. We again note by \eqref{ineqcalc} that if $0\leq n \leq q/r -(1+2\beta)$, then $\beta+n \leq -1+(n+1)/q+(q-1)/r$ and $q > n$. Thus, by the choice of $\beta$ and the inequality $q>n$, we have that
\begin{equation}
\begin{split}
 \deg_{\theta}(\phi_{\theta,i(\phi)}) + n(1+q+\dots + q^{i(\phi)-1})&\leq \frac{q-1}{r} - (n+1)+\frac{n}{q}+\frac{1}{q}+n+n(q+\dots +q^{i(\phi)-1})\\
&< q + n(q+\dots +q^{i(\phi)-1}) \\
&\leq q+(q-1)(q+\dots +q^{i(\phi)-1})=q^{i(\phi)}.
\end{split}
\end{equation}
Therefore, by Proposition \ref{radiusofconv}, we see that $1$ is within the radius of convergence of $\log_{\psi}$. Finally, applying $\log_{\psi}$ to both sides of \eqref{integrality} finishes the first part. We now prove the last assertion. 
If we evaluate \eqref{integrality3} at $t=1$, then we get the first equality in \eqref{integrality4}. Now, recall the infinite product expansion of $\omega_{n}$ in \eqref{AndersonThakur}, and observe that $
\tau^i(\omega_n)=\ell_i(z_1)\ell_i(z_2)\cdots \ell_i(z_n)\omega_n$ for all $i\geq 1$. Thus,
\begin{equation}\label{partofproof}
\begin{split}
\log_{\phi}(\omega_n)&=\gamma_0\omega_n + \gamma_1\tau(\omega_n)+\gamma_2\tau^2(\omega_n)+\dots \\
&=\gamma_0\omega_n+\gamma_1\ell_1(z_1)\ell_1(z_2)\dots \ell_1(z_n)\omega_n+\gamma_2\ell_2(z_1)\ell_2(z_2)\dots \ell_2(z_n)\omega_n+\dots\\
&=\omega_n\log_{\varphi}(1),
\end{split}
\end{equation}
which gives the second equality in \eqref{integrality4}.
\end{proof}
\begin{remark}\label{calculation}
Observe that the second equality in Corollary \ref{Cor1} follows from the same calculation in \eqref{partofproof}  replacing $1$ by $P_{\phi}(z_1,\dots,z_n)$.
\end{remark}
\begin{remark} One can note that dealing with $\log_{\psi}$ which is not an entire function is one of the obstacles which bounds us from understanding \eqref{integrality4} for any $n$ and any Drinfeld $A$-module $\phi$ over $A$. It would be interesting to understand the value of $L(\phi^{\vee},z_1,\dots,z_n,1)$ and the polynomial $P_{\phi}(z_1,\dots,z_n)$ in general.
\end{remark}

\begin{example}\label{omegaotherdef} If we choose $\phi=\tilde{C}$ in Corollary \ref{analog} which is defined by $\tilde{C}_{\theta}:=\theta + \ell_1(z_1)\tau$, then we recover an immediate consequence of \cite[Prop. 5.9]{APTR} which can be stated as
\[
L(\tilde{C},\mathbb{A})=\sum_{a\in A_{+}} \frac{a(z_1)}{a} =\log_{\tilde{C}}(1)=\frac{\log_{C}(\omega_1)}{\omega_1}=-\frac{\tilde{\pi}}{(z_1-\theta)\omega_1},
\]
where the last equation follows from the fact that $\omega_1=\exp_{C}(\tilde{\pi}/(\theta-z_1))$ which was proved by Pellarin in \cite[\S~4]{Pellarin}.
\end{example}
\begin{remark}
Let $\psi$ be a Drinfeld $\tilde{\mathbb{A}}$-module as in Corollary \ref{analog} and recall that $\log_{\psi}=\sum_{i\geq 0}\gamma_i\ell_i(z_1)\dots \ell_i(z_n)t^{i}\tau^i$ is the logarithm series corresponding to $\psi$. If we compare the coefficients of $t^i$ for all $i$ on both sides of \eqref{integrality3}, we get 
\begin{equation}\label{lseries}
\sum\limits_{a\in A_{+,i}}\frac{\mu(a)a(z_1)\dots a(z_n)}{a}=\gamma_i\ell_i(z_1)\dots \ell_i(z_n).
\end{equation}
We note that \eqref{lseries} can be seen as the generalization of the formulas obtained by Perkins \cite[Thm. 4.16]{Perkins} which relates the coefficients of the logarithm function $\log_{C}$ to Pellarin $L$-Series.
\end{remark}
\begin{remark} We recall the definition of the Carlitz module $C$ from Section 2, and for any $x\in \CC_{\infty}$ in the domain of $\log_{C}$, let $\log_{C}(x)=\sum_{i\geq 0}L_{i}\tau^i(x)$. For integers $k,i\geq 0$, the power sum $S_i(k)$ is defined by  
\[
 S_i(k):=\sum_{a\in A_{+,i}}a^{k}.
\]
Then, Lee proved \cite[Thm. 4.1]{Lee} (see also \cite[Cor. 5.6.4(1)]{Thakur}) that 
\begin{equation}\label{powersum}
S_i(q^k-1)=\sum_{a\in A_{+,i}}a^{q^k-1}=
\begin{cases}
L_i(\theta^{q^k}-\theta)\dots (\theta^{q^k}-\theta^{q^{i-1}}) & \text{ if } k \geq i \\
0 & \text{ if } k < i.
\end{cases}
\end{equation}
We refer the reader to \cite{PapanikolasLog} and \cite[\S~5]{Thakur} for more details about the sum $S_i(k)$.

Now, let $\phi$ be a Drinfeld $A$-module of rank $r$ defined as in \eqref{intro0} such that $0\leq \beta \leq q/(2r)-1$, and consider the logarithm series $\log_{\phi}=\sum_{i\geq 0}\gamma_{i}\tau^i$ corresponding to $\phi$. If we define the Drinfeld $\tilde{\mathbb{A}}$-module $\psi$ by $\psi_{\theta}=\sum_{i=0}^{r}\phi_{\theta,i}\ell_{i}(z_1)t^i$, and evaluate $\log_{\psi}(1)=\sum_{i\geq 0}\gamma_{i}\ell_{i}(z_1)t^i$ at $z_1=\theta^{q^k}$ for some $k\in \NN$, then comparing coefficients of $t^i$ on both sides of \eqref{integrality3} for all $i\geq 0$ implies that  
\[
\sum_{a\in A_{+,i}}\mu(a)a^{q^k-1}=
\begin{cases}
\gamma_i(\theta^{q^k}-\theta)\dots (\theta^{q^k}-\theta^{q^{i-1}}) & \text{ if } k \geq i \\
0 & \text{ if } k < i
\end{cases}
\]
which can be seen as a generalization of \eqref{powersum}.
\end{remark}
\section{Entireness of the $L$-series $L(\phi^{\vee},z_1,\dots,z_n;x,y)$}
Before we state the main result of this section, we need a definition due to Goss.
\begin{definition} \cite[\S 8.5]{Goss} For each $y \in \mathbb{Z}_p$, let $g_y(z)$ be a power series in $z$ such that $g_y$ converges at all values of $z\in \CC_{\infty}$. Let $(x,y)\in \mathbb{S}_{\infty}$. We call the entire power series $f(x,y):=g_y(1/x)$ \textit{an entire function on $\mathbb{S}_{\infty}$} if for any bounded subset $H \subset \CC_{\infty}$ and $\epsilon >0$, there exists $\delta_{H}>0$ such that if $y_0,y_1 \in \mathbb{Z}_p$ and $|y_0-y_1|_{p}<\delta_{H}$, then $\inorm{g_{y_0}(z)-g_{y_1}(z)} <\epsilon$ for all $z\in H$.
\end{definition}
This section occupies the proof of the following result.
\begin{theorem}\label{entire} The infinite series $L(\phi^{\vee},z_1,\dots,z_n;x,y)$ can be analytically continued to an entire function on $\mathbb{C}^n_{\infty} \times \mathbb{S}_{\infty}$. In particular, the $L$-series $L(\phi^{\vee},z_1,\dots,z_n,s)$ converges in $\TT_n$ for all $s\in \mathbb{Z}$.
\end{theorem}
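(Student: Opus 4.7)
The plan is to establish entireness in two stages: first, entireness in $x$ (equivalently, as a power series in $1/x$) for each fixed $y \in \ZZ_p$, and then $p$-adic continuity in $y$, uniformly on bounded subsets of $\CC_\infty^n$. The critical input will be Lemma \ref{partialanalog}, which provides the vanishing of twisted power sums needed to control the coefficients.

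First, fix a bounded subset $H\subset \CC_\infty^n$. Using the binomial expansion of $\langle a\rangle^y$ from \eqref{Lucasformula}, write
\[\mathcal{L}_{d,n}(x,y)(z_1,\dots,z_n)=x^{-d}\sum_{j\geq 0}\binom{y}{j}T_{d,j}(z_1,\dots,z_n),\qquad T_{d,j}:=\sum_{a\in A_{+,d}}\mu(a)\,a(z_1)\cdots a(z_n)(\langle a\rangle-1)^{j}.\]
Expanding $(\langle a\rangle-1)^{j}$ as a sum of monomials in $\theta^{-1}$ whose coefficients are symmetric expressions in the $\FF_q$-coefficients of $a$, and using the Frobenius identity $a(\theta^{q^k})=a^{q^k}$ together with standard Carlitz-type manipulations, one may rewrite each $T_{d,j}$ in terms of power sums of the form $\sum_{a\in A_{+,d}}\mu(a)\,a(z_1)\cdots a(z_n)\,a(\zeta)$ for appropriate specializations $\zeta$ of an auxiliary variable $z_{n+1}$; by Lemma \ref{partialanalog} applied with $n+1$ variables, these sums vanish once $d$ is large enough relative to $j$ and $\beta$, forcing $T_{d,j}=0$ whenever the degree $d$ exceeds a fixed linear-in-$j$ threshold.

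Second, combining this sparseness with the easy estimate $\dnorm{(\langle a\rangle-1)^{j}}\leq q^{-j}$, a uniform bound for $|a(z_i)|$ on $H$, and the degree bound in Lemma \ref{mu}(b), one obtains $\dnorm{T_{d,j}}_{H}\leq C(H)^d q^{-\lambda(d,j)}$ with $\lambda(d,j)\to \infty$ fast enough. Multiplying by $x^{-d}$ and summing therefore converges uniformly on $H\times\{\inorm{x}\leq M\}$ for every $M$, which gives for each $y$ an entire function $g_y(1/x)=L(\phi^\vee,z_1,\dots,z_n;x,y)$ of $(z_1,\dots,z_n,1/x)\in\CC_\infty^{n+1}$. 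Continuity in $y$ is then standard: if $|y_0-y_1|_p<p^{-N}$, the Lucas formula \eqref{Lucasformula} forces $\binom{y_0}{j}=\binom{y_1}{j}$ for all $j<p^N$, so $\inorm{g_{y_0}(1/x)-g_{y_1}(1/x)}$ is bounded by the tail of the series, uniformly small on $H\times\{\inorm{x}\leq M\}$. The final assertion follows by specializing $(x,y)=(\theta^s,-s)$: for each integer $s$, $L(\phi^\vee,z_1,\dots,z_n,s)$ is a value of the entire function at a point of $\mathbb{S}_\infty$, hence defines an element of $\TT_n$.

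The main obstacle is the second step: turning the qualitative vanishing statement of Lemma \ref{partialanalog} into a sharp quantitative bound on $\dnorm{T_{d,j}}_{H}$. This requires careful combinatorial bookkeeping of the monomial expansion of $(\langle a\rangle-1)^{j}$, matched against Lemma \ref{mu}(b) to control the growth of $\mu(a)$, to ensure the exponent $\lambda(d,j)$ dominates the $d\log C(H)$ coming from $|a(z_i)|$ uniformly as $j$ varies.
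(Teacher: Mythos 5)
Your overall strategy (binomial expansion of $\langle a\rangle^y$, vanishing of twisted power sums via Lemma \ref{partialanalog}, the tail estimate coming from $\dnorm{(\langle a\rangle-1)^j}\le q^{-j}$, and Lucas' formula for continuity in $y$) is the same as the paper's, but the quantitative core of your second step is wrong in a way that defeats the conclusion. Writing $(\langle a\rangle-1)^j=\sum_{i=0}^j\binom{j}{i}(-1)^{j-i}a^i\theta^{-di}$ and $a^i=\prod_k a(\theta^{q^k})^{i_k}$ for the $q$-adic digits $i=\sum_k i_kq^k$, the sum $\sum_{a\in A_{+,d}}\mu(a)a(z_1)\cdots a(z_n)a^i$ is the power sum $H_{d,\,n+\ell_q(i)}$ (with $\ell_q(i)=\sum_k i_k$) specialized at $\theta^{q^k}$ repeated $i_k$ times. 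So you must invoke Lemma \ref{partialanalog} with $n+\ell_q(i)$ variables, not $n+1$: a single auxiliary variable $z_{n+1}$ only handles the monomials $a^{q^k}$. The resulting vanishing threshold is $d>r(n+\ell_q(i)+1+\beta)/(q-1)$, and since $\ell_q(i)=O(\log_q i)$ this threshold is \emph{logarithmic} in $j$, not linear. The distinction is not cosmetic; it is exactly what entireness hinges on. With a linear-in-$j$ threshold the surviving terms of $c_d:=\sum_j\binom{y}{j}T_{d,j}$ have $j\gtrsim d/A$, so the best bound available is $\dnorm{c_d}_{H}\le C(H)^dq^{d(1-1/r)}q^{-d/A}$, i.e.\ only exponential decay; then $\dnorm{c_d}_{H}^{1/d}$ tends to a nonzero constant and you obtain merely a finite radius of convergence in $1/x$ (possibly not even covering all of $H$), not an entire function. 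With the correct logarithmic threshold the surviving indices satisfy $j\ge q^{(d-B)/A}$, exponentially large in $d$, and the factor $q^{-j}$ then yields the doubly exponential decay $q^{-q^{[d/r-(n+1+\beta)/(q-1)]-2}}$ of the paper's Lemma \ref{bound2}, which is what entireness requires.

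The paper organizes this more cleanly by truncating $y$ to the nonnegative integer $y_m$ (its $p$-adic expansion cut at $p^{lm}$): then $\langle a\rangle^{y_m}=a^{y_m}\theta^{-dy_m}$ exactly, so $\mathcal{L}_{d,n}(x,y_m)$ is a specialization of $H_{d,\,n+\ell_q(y_m)}$ with $\ell_q(y_m)\le(m+1)(q-1)$ and hence vanishes once $d>r(m+1)+r(n+\beta+1)/(q-1)$, while $\binom{y}{j}=\binom{y_m}{j}$ for $j<q^m$ bounds the difference $\mathcal{L}_{d,n}(x,y)-\mathcal{L}_{d,n}(x,y_m)$ by $\inorm{x}^{-d}q^{d(1-1/r)}q^{-q^m}$; choosing $m\approx d/r-(n+\beta+1)/(q-1)$ gives the needed estimate. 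If you repair your step two along these lines, tracking the digit sum $\ell_q(i)$ rather than a single auxiliary variable, your argument becomes the paper's.
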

In order to prove Theorem \ref{entire}, we first introduce the following setting in \cite[\S~8]{APTR} which was given by Angl\`{e}s, Pellarin, and Tavares Ribeiro. We also use Proposition \ref{Prop5} to prove Theorem \ref{logalgthm}. 

Let $Y_1,\dots,Y_n$ be indeterminates. We set 
\[
 \mathbb{B}_n:=\CC_{\infty}[Y_1,\dots,Y_n,\tau(Y_1),\dots,\tau(Y_n),\dots],
\]
where the action of $\tau$ on $\mathbb{B}_n$ is given by $\tau \cdot \tau^i(Y_j)=\tau^{i+1}(Y_j)$. Let $W$ be another indeterminate. We define $\mathbb{D}_n$ to be the ring of elements of the form $\sum_{i\geq 0}b_i\tau^i(W)$, where $b_i\in \mathbb{B}_n.$ For elements $F=\sum f_i\tau^i(W) $ and $G=\sum g_i\tau^i(W)$ in $\mathbb{D}_n$, the multiplication $F\cdot G$ is given by $\sum_{k \geq 0}\sum_{i+j=k}f_i\tau^i(g_j)\tau^k(W)$, and the action of $\tau$ on $\tau^i(W)$ is equal to $\tau^{i+1}(W)$. Using the Drinfeld $A$-module $\phi$ in \eqref{intro0}, define
\[ 
 \mathcal{E}_{\phi}(Y_1,\dots,Y_n,W):=\sum_{a\in A_{+}} \frac{\mu(a)C_a(Y_1)\dots C_a(Y_n)}{a}\tau^{\deg_{\theta}(a)}(W).
\]
The multiplication rule of the elements in $\mathbb{D}_{n}$ by any element $\gamma \in K$, $z_j$ for all $1\leq j\leq n$,  and $t$ is given as follows:
\begin{equation}\label{multiplication}
\begin{split}
\gamma \cdot f&=\gamma f {\text{ for any $f\in \mathbb{D}_n$}}\\
z_j \cdot \tau^m(Y_i)&=\tau^m(Y_i) {\text{ if $i \neq j$}} \\
z_j \cdot \tau^m(Y_j)&=\tau^m(C_{\theta}(Y_j)) {\text{ for all $1\leq j \leq n$}} \\
t \cdot \tau^m(Y_j)&=\tau^m(Y_j) {\text{  for all $1\leq j \leq n$}} \\
z_j \cdot \tau^m(W)&=\tau^m(W) {\text{ for all $1\leq j \leq n$}}\\
t \cdot \tau^m(W)&=\tau^{m+1}(W).
\end{split}
\end{equation}
Moreover, for any $f\in \mathbb{D}_{n}$ and $g\in K[z_1,\dots,z_n,t]$ given by
\[
g=\sum_{j_1,\dots,j_n,j_{n+1}\geq 0}g_{j_1\dots j_{n+1}}z_1^{j_1}\dots z_n^{j_{n}}t^{j_{n+1}},
\]
we define the $K[z_1,\dots,z_n,t]$-action on $\mathbb{D}_{n}$ by
\[
g\cdot f=\sum_{j_1,\dots,j_n,j_{n+1}\geq 0}(z_1^{j_1}\cdot f)\dots (z_n^{j_n}\cdot f)(t^{j_{n+1}}\cdot f).
\]
Thus, the ring $\mathbb{D}_n$ is now a $K[z_1,\dots,z_n,t]$-algebra. Using \eqref{multiplication}, we conclude that for any $a=a(\theta)\in A$ and any $m\geq 1$, we have
\begin{equation}\label{zactions}
(a(z_1)\dots a(z_n))\cdot \tau^m(Y_1\dots Y_n)=\tau^m(C_a(Y_1)\dots C_a(Y_n)).
\end{equation}
The action of $K[z_1,\dots,z_n]$ to $\mathbb{D}_n$ extends to an action of $K[z_1,\dots,z_n][[t]]$ in a following way: If $G=\sum G_it^i \in K[z_1,\dots,z_n][[t]]$, we set
\[G\cdot (Y_1\dots Y_nW):=\sum G_i\cdot (Y_1\dots Y_n\tau^i(W)).\]
We have the following useful lemma for the action defined above.
\begin{lemma}[{Angl\`{e}s, Pellarin, Tavares Ribeiro \cite[Lem.~3.11(2)]{AnglesPellarinTavaresRibeiro}}]\label{action}
The map $g:K[z_1,\dots,z_n,t] \rightarrow \mathbb{D}_n$ defined by 
\[ g(f)=f \cdot (Y_1\dots Y_n) \]
is injective for all $f\in K[z_1,\dots,z_n,t]$.
\end{lemma}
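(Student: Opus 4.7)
The plan is to unpack the action of $f\in K[z_1,\ldots,z_n,t]$ on the distinguished element $Y_1\cdots Y_n W\in\mathbb{D}_n$ — I interpret the target as $Y_1\cdots Y_n W$ rather than $Y_1\cdots Y_n\in\mathbb{B}_n$, since if $t$ acted on an element with no $W$-factor it would be trivial and $g$ could not be injective — and then reduce to a linear independence statement inside the polynomial ring $\mathbb{B}_n$. Writing
\[
f = \sum_{j_1,\ldots,j_{n+1}\geq 0} f_{j_1,\ldots,j_{n+1}}\, z_1^{j_1}\cdots z_n^{j_n}\, t^{j_{n+1}}, \qquad f_{j_1,\ldots,j_{n+1}}\in K,
\]
and iterating the rules in \eqref{multiplication} (each $z_i$ is the $\CC_\infty$-algebra endomorphism of $\mathbb{B}_n$ sending $\tau^m(Y_i)\mapsto \tau^m(C_\theta(Y_i))$ and fixing the other generators, while $t$ sends $\tau^m(W)$ to $\tau^{m+1}(W)$ and fixes every $\tau^m(Y_j)$), together with $C_\theta^k=C_{\theta^k}$ in $A\{\tau\}$, one obtains
\[
f\cdot (Y_1\cdots Y_n W) = \sum_{j_1,\ldots,j_n,\,m\geq 0} f_{j_1,\ldots,j_n,m}\, C_{\theta^{j_1}}(Y_1)\cdots C_{\theta^{j_n}}(Y_n)\,\tau^m(W).
\]

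Since every element of $\mathbb{D}_n$ admits a unique expansion $\sum_{m\geq 0} b_m\tau^m(W)$ with $b_m\in\mathbb{B}_n$, vanishing of the right-hand side forces, for each $m\geq 0$,
\[
\sum_{j_1,\ldots,j_n\geq 0} f_{j_1,\ldots,j_n,m}\, C_{\theta^{j_1}}(Y_1)\cdots C_{\theta^{j_n}}(Y_n) = 0 \quad \text{in } \mathbb{B}_n.
\]
Thus it suffices to show that, for each fixed $m$, the family $\{C_{\theta^{j_1}}(Y_1)\cdots C_{\theta^{j_n}}(Y_n)\}_{(j_1,\ldots,j_n)\in\NN^n}$ is $K$-linearly independent in $\mathbb{B}_n$.

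The key facts for the independence are that $\mathbb{B}_n$ is by definition a polynomial ring over $\CC_\infty$ in the algebraically independent indeterminates $\{\tau^i(Y_j)\}_{i\geq 0,\,1\leq j\leq n}$, and that in $A\{\tau\}\cdot Y_i$ we have
\[
C_{\theta^j}(Y_i) = \tau^j(Y_i) + \sum_{k<j} c_{j,k}\, \tau^k(Y_i), \qquad c_{j,k}\in A,
\]
with leading twist $\tau^j(Y_i)$ and coefficient $1$. To conclude, I would assume a nontrivial relation and pick $(J_1,\ldots,J_n)$ lexicographically maximal among the finitely many tuples with $f_{J_1,\ldots,J_n,m}\neq 0$. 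Extracting the coefficient of the monomial $\tau^{J_1}(Y_1)\cdots \tau^{J_n}(Y_n)$ in the vanishing sum: only tuples $(j_1,\ldots,j_n)$ with $j_i\geq J_i$ for every $i$ contribute, and any such tuple other than $(J_1,\ldots,J_n)$ must satisfy $j_{i_0}>J_{i_0}$ at its first disagreement with $(J_1,\ldots,J_n)$, hence is lexicographically strictly greater and so annihilated by maximality. The coefficient therefore collapses to $f_{J_1,\ldots,J_n,m}\cdot\prod_i c_{J_i,J_i}=f_{J_1,\ldots,J_n,m}\neq 0$, a contradiction. The main obstacle is cleanly identifying the iterated $z_i$-action as Carlitz multiplication $C_{\theta^{j_i}}$ and checking compatibility across the different $Y_i$-factors via the $\CC_\infty$-algebra-endomorphism structure on $\mathbb{B}_n$; once that reduction is in place, the whole argument collapses to triangular linear algebra in a polynomial ring in independent transcendentals.
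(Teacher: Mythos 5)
This lemma is not proved in the paper at all: it is imported verbatim from Angl\`es--Pellarin--Tavares Ribeiro \cite[Lem.~3.11(2)]{AnglesPellarinTavaresRibeiro}, so there is no in-paper argument to compare against. Your proposal supplies a correct self-contained proof. Your preliminary observation is right and worth keeping: as literally written the target $Y_1\cdots Y_n$ carries no $W$-factor, on which $t$ would act trivially and injectivity would fail; but since $\mathbb{D}_n$ consists of sums $\sum_i b_i\tau^i(W)$ and $W=\tau^0(W)$ is its multiplicative identity, the element $Y_1\cdots Y_n$ of $\mathbb{D}_n$ can only mean $Y_1\cdots Y_n W$, which is exactly how the lemma is used in \eqref{zactions} and in the proof of Lemma~\ref{partialanalog}. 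Granting that, your reduction is the natural one: the commuting operators $z_1^{j_1}\cdots z_n^{j_n}t^m$ send $Y_1\cdots Y_nW$ to $C_{\theta^{j_1}}(Y_1)\cdots C_{\theta^{j_n}}(Y_n)\tau^m(W)$, the $\tau^m(W)$-components of an element of $\mathbb{D}_n$ are uniquely determined, and the problem becomes $K$-linear independence of the products $C_{\theta^{j_1}}(Y_1)\cdots C_{\theta^{j_n}}(Y_n)$ in the polynomial ring $\mathbb{B}_n$. Your triangularity argument for that independence is sound: $C_{\theta^j}(Y_i)=\tau^j(Y_i)+(\text{lower }\tau\text{-order terms with coefficients in }A)$ with unit leading coefficient because $C_{\theta^j}=(\theta+\tau)^j$, each product is multilinear in the columns $\{\tau^k(Y_i)\}_k$, and extracting the coefficient of $\tau^{J_1}(Y_1)\cdots\tau^{J_n}(Y_n)$ for a lexicographically maximal tuple with nonzero coefficient isolates $f_{J_1,\dots,J_n,m}$, giving the contradiction. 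This is essentially the argument in the cited source, so nothing is missing; if anything, you could shorten the last step by taking $(J_1,\dots,J_n)$ merely maximal for the componentwise partial order, since only componentwise-dominating tuples contribute to that monomial.
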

By Theorem \ref{canonicaldeformationTaelman}, the Taelman $L$-value $L(\psi,\tilde{\mathbb{A}})$ corresponding to Drinfeld $\tilde{\mathbb{A}}$-module $\psi$ defined in \eqref{psi} is given by 
\[
L(\psi,\tilde{\mathbb{A}})=\sum_{a\in A_{+}}\frac{\mu(a)a(z_1)\dots a(z_n)t^{\deg_{\theta}(a)}}{a},
\]
and by \eqref{multiplication} and \eqref{zactions}, we get $L(\psi,\tilde{\mathbb{A}})\cdot (Y_1\dots Y_nW)=\mathcal{E}_{\phi}(Y_1,\dots,Y_n,W)$.
By the action defined in \eqref{multiplication}, we notice that for any $1\leq j \leq n$,
\begin{equation}\label{zjaction}
\begin{split}
t^i\ell_{i}(z_j)\cdot Y_jW&=t^i((z_j-\theta^{q^{i-1}})\dots(z_j-\theta^q) (z_j-\theta))\cdot Y_jW\\
&=t^i(z_j-\theta^{q^{i-1}})\dots (z_j-\theta^{q^2})(z_j-\theta^q)\cdot(C_{\theta}(Y_j)-\theta Y_j)W \\
&=t^i(z_j-\theta^{q^{i-1}})\dots (z_j-\theta^{q^2})(z_j-\theta^q)\cdot \tau(Y_j)W \\
&=t^i(z_j-\theta^{q^{i-1}})\dots (z_j-\theta^{q^2})\cdot (\tau(C_{\theta}(Y_j))-\theta^qY_j)W\\
&=t^i(z_j-\theta^{q^{i-1}})\dots (z_j-\theta^{q^2})\cdot\tau^2(Y_j)W\\
&\vdots \\
&=t^i\tau^i(Y_j)W\\
&=\tau^i(Y_jW),
\end{split}
\end{equation}
and therefore \eqref{zjaction} implies that
\begin{equation}\label{zjaction2}
\begin{split}
t^i\ell_{i}(z_j)\cdot (Y_1Y_2\dots Y_{j-1} Y_j Y_{j+1} \dots Y_nW)&=t^i\ell_{i}(z_j)\cdot (Y_jY_1\dots Y_{j-1} Y_{j+1} \dots Y_nW)\\
&=t^i\cdot\tau^i(Y_j)Y_1\dots Y_{j-1} Y_{j+1} \dots Y_nW\\
&=\tau^i(Y_jW)Y_1\dots Y_{j-1} Y_{j+1} \dots Y_n.
\end{split}
\end{equation}
Combining \eqref{zjaction} and \eqref{zjaction2} gives that
\begin{equation}\label{zjaction3}
t^i\ell_{i}(z_1)\dots \ell_{i}(z_n)\cdot (Y_1Y_2\dots Y_{j-1}Y_j Y_{j+1} \dots Y_n) =\tau^i(Y_1\dots Y_nW).
\end{equation}
Since $\exp_{\psi}=\sum_{i} \alpha_i t^i\ell_i(z_1)\dots \ell_i(z_n) \tau^i$, \eqref{zactions} and \eqref{zjaction3} imply that 
\[
\exp_{\phi}(\mathcal{E}_{\phi}(Y_1,\dots,Y_n,W))=\exp_{\psi}(L(\psi,\tilde{\mathbb{A}}))\cdot (Y_1\dots Y_nW). 
\]
But by Proposition \ref{Prop5}, $\exp_{\psi}(L(\psi,\tilde{\mathbb{A}}))\in A[z_1,\dots,z_n,t]$. Therefore, we have that 
\begin{equation}\label{multivariable}
\exp_{\phi}(\mathcal{E}_{\phi}(Y_1,\dots,Y_n,W)) \in A[Y_1,\dots,Y_n,W]. 
\end{equation}
Now for any $m \in \NN$ and indeterminates $X_1,X_2,\dots, X_n,w$, choose a suitable $\CC_{\infty}$-algebra homomorphism sending $\tau^m(Y_j)$ to $X_j^{q^m}$ for all $1\leq j\leq n$ and $\tau^m(W)$ to $w^{q^m}$. Rewriting \eqref{multivariable} with this homomorphism, we get the following theorem.
\begin{theorem}\label{logalgthm}Let $\phi$ be a Drinfeld $A$-module as in \eqref{intro0}, and let $X_1,\dots, X_n,w$ be indeterminates. The series
\[\exp_{\phi}\bigg(\sum_{a\in A_{+}}\frac{\mu(a)C_a(X_1)\dots C_a(X_n)}{a}w^{q^{\deg_{\theta}(a)}}\bigg) \in K[X_1,\dots,X_n][[w]]
\]
is actually in $A[X_1,\dots,X_n,w]$.
\end{theorem}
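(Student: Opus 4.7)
The plan is to derive Theorem~\ref{logalgthm} from Proposition~\ref{Prop5} by translating the integrality statement $\exp_{\psi}(L(\psi,\tilde{\mathbb{A}}))\in A[z_1,\dots,z_n,t]$ into an integrality statement for $\exp_{\phi}(\mathcal{E}_{\phi}(Y_1,\dots,Y_n,W))$ via the $K[z_1,\dots,z_n,t]$-action on $\mathbb{D}_n$, and then specializing the indeterminates $Y_j, W$ to $X_j, w$.

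First I would verify the identity
\[
L(\psi,\tilde{\mathbb{A}}) \cdot (Y_1 \cdots Y_n W) = \mathcal{E}_{\phi}(Y_1,\dots,Y_n,W)
\]
by combining Theorem~\ref{canonicaldeformationTaelman} with the action rule \eqref{zactions}, namely $(a(z_1)\cdots a(z_n))\cdot \tau^m(Y_1\cdots Y_n) = \tau^m(C_a(Y_1)\cdots C_a(Y_n))$, together with $t^m\cdot W = \tau^m(W)$. Second, I would establish the compatibility
\[
\exp_{\phi}(\mathcal{E}_{\phi}(Y_1,\dots,Y_n,W)) = \exp_{\psi}(L(\psi,\tilde{\mathbb{A}})) \cdot (Y_1 \cdots Y_n W),
\]
whose heart is the telescoping identity \eqref{zjaction3}: $t^i \ell_i(z_1) \cdots \ell_i(z_n) \cdot (Y_1 \cdots Y_n W) = \tau^i(Y_1 \cdots Y_n W)$. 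This lets each coefficient $\alpha_i \ell_i(z_1)\cdots \ell_i(z_n) t^i$ of $\exp_{\psi}$ reproduce the action of $\alpha_i\tau^i$ on the $\phi$-side. Combined with Proposition~\ref{Prop5}, this forces $\exp_{\phi}(\mathcal{E}_{\phi}(Y_1,\dots,Y_n,W))\in A[Y_1,\dots,Y_n,W]$, with Lemma~\ref{action} ensuring that the action on $Y_1\cdots Y_n W$ carries no loss of information. Finally, I would apply the $\CC_{\infty}$-algebra homomorphism sending $\tau^m(Y_j)\mapsto X_j^{q^m}$ and $\tau^m(W)\mapsto w^{q^m}$; under it, $\mathcal{E}_{\phi}$ becomes the series in the statement and the integrality descends to $A[X_1,\dots,X_n,w]$.

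The main obstacle is the careful unwinding behind the telescoping identity \eqref{zjaction}: each factor $(z_j - \theta^{q^k})$ in $\ell_i(z_j)$ must cancel against the iterated use of $z_j\cdot \tau^k(Y_j) = \tau^k(C_{\theta}(Y_j))$ together with $C_{\theta}(Y_j) - \theta Y_j = \tau(Y_j)$, so that successive applications of $\tau$ absorb the $\theta^{q^k}$ constants in the correct order. A secondary technical point is justifying that the $K[z_1,\dots,z_n][[t]]$-action extends to make sense of $\exp_{\psi}(L(\psi,\tilde{\mathbb{A}}))$ as an operator on the distinguished element $Y_1\cdots Y_n W$; this requires a termwise check within $\mathbb{D}_n$, but no new analytic input beyond what is already encoded in the setup of the ring.
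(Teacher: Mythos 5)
Your proposal follows essentially the same route as the paper: it converts Proposition~\ref{Prop5} into the statement $\exp_{\phi}(\mathcal{E}_{\phi}(Y_1,\dots,Y_n,W))\in A[Y_1,\dots,Y_n,W]$ via the identity $L(\psi,\tilde{\mathbb{A}})\cdot(Y_1\cdots Y_nW)=\mathcal{E}_{\phi}(Y_1,\dots,Y_n,W)$ and the telescoping computation \eqref{zjaction}--\eqref{zjaction3}, then specializes $\tau^m(Y_j)\mapsto X_j^{q^m}$, $\tau^m(W)\mapsto w^{q^m}$. This is exactly the paper's argument, so the proposal is correct.
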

We define 
\[
S^{\phi}_k=\sum_{a\in A_{+,k}}\frac{\mu(a)C_{a}(X_1)\dots C_{a}(X_n)}{a}
\]
and
\[\jmath^{\phi}=\sum\limits_{a\in A_{+}}\frac{\mu(a)C_{a}(X_1)\dots C_a(X_n)}{a}w^{q^{\deg(a)}}=\sum_{k\geq 0}S^{\phi}_kw^{q^k}.
\]
For $\exp_{\phi}=\sum_{j\geq 0}\alpha_j\tau^j$, we set $Z^{\phi}_k=\sum\limits_{i=0}^k (S^{\phi}_{k-i})^{q^i}\alpha_i$,
and therefore
\begin{equation}\label{seriesZk}
\exp_{\phi}(\jmath^{\phi})=\sum_{k \geq 0}Z^{\phi}_kw^{q^k}.
\end{equation}
By Theorem \ref{logalgthm} and \eqref{seriesZk}, we have $Z^{\phi}_k \in A[X_1,\dots,X_n]$ for all $k\geq 0$, and for $k$ arbitrarily large, $Z^{\phi}_k=0$. 

We recall the definition of the Carlitz module $C$ from Section 2 and that $\Ker(\exp_{C})=\tilde{\pi}A$ where $\tilde{\pi} \in \CC_{\infty}^{\times}$. Let $\tilde{X}$ be the set of indeterminates $X_1,\dots,X_n$. For indeterminates $X_1,\dots X_n$, we denote the polynomial ring $\CC_{\infty}[X_1,\dots,X_n]$ by $\CC_{\infty}[\tilde{X}]$. Following the work of Angl\`{e}s, Pellarin, and Tavares Ribeiro in \cite[\S~3]{AnglesPellarinTavaresRibeiro}, for any $G\in \CC_{\infty}[\tilde{X}]$, we define a function $\dnorm{\,\cdot\,}_{\tilde{X}}\colon \CC_{\infty}[\tilde{X}] \to \RR $ by 
\[
\dnorm{G}_{\tilde{X}} =\sup\{ \inorm{G(x_1,\dots,x_n)} \mid x_i\in \exp_C(K_{\infty}\tilde{\pi})\quad \text{for } 1\leq i \leq n\},
\]
so that $\dnorm{\,\cdot\,}_{\tilde{X}}$ defines a well-defined and ultrametric norm on $\CC_{\infty}[\tilde{X}]$ (see \cite[\S~3]{AnglesPellarinTavaresRibeiro}). 
\begin{proposition}\label{vanishing} Let $\phi$ be a Drinfeld $A$-module of rank $r$ as in \eqref{intro0} and $\beta=\max\{\deg_{\theta}(\phi_{\theta,i}) \mid 1\leq i \leq r\}$. Then $Z^{\phi}_k=0$ when $k > r(n + \beta)/(q-1).$
\end{proposition}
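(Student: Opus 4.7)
The plan is to estimate $\dnorm{Z^\phi_k}_{\tilde{X}}$ from above, show that it is strictly less than $1$ precisely when $k > r(n+\beta)/(q-1)$, and then invoke the fact, established in \cite{AnglesPellarinTavaresRibeiro}, that $\dnorm{\,\cdot\,}_{\tilde{X}}$ separates nonzero elements of $A[\tilde{X}]$. Together with $Z^\phi_k \in A[X_1,\dots,X_n]$ from Theorem \ref{logalgthm} and \eqref{seriesZk}, this forces $Z^\phi_k = 0$.

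Three estimates feed into this plan. First, the identity $C_a(\exp_C(u))=\exp_C(au)$ together with the $A\tilde{\pi}$-periodicity of $\exp_C$ allows one to replace any argument in $K_\infty\tilde{\pi}$ by a representative of norm at most $q^{1/(q-1)}$, on which $\exp_C$ is norm-preserving; this gives
\[
\dnorm{C_a(X_j)}_{\tilde{X}} \leq q^{1/(q-1)} \quad \text{for every } a\in A_+ \text{ and every } 1\leq j\leq n.
\]
Second, Lemma \ref{mu}(b) yields $\inorm{\mu(a)/a} \leq q^{-k/r}$ for $a\in A_{+,k}$, and combining this with the preceding estimate via the ultrametric inequality produces
\[
\dnorm{S^\phi_k}_{\tilde{X}} \leq q^{-k/r + n/(q-1)}.
\]
Third, the coefficient bound $\ord_\infty(\alpha_j)\geq q^j(j/r-\beta/(q-1))$ from \cite[Eq.~28]{ElGuindyPapanikolas} controls the coefficients of $\exp_\phi$.

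Using $\dnorm{(S^\phi_{k-i})^{q^i}}_{\tilde{X}} = \dnorm{S^\phi_{k-i}}_{\tilde{X}}^{q^i}$, each summand in $Z^\phi_k = \sum_{i=0}^k (S^\phi_{k-i})^{q^i}\alpha_i$ satisfies
\[
\dnorm{(S^\phi_{k-i})^{q^i}\alpha_i}_{\tilde{X}} \leq q^{q^i\bigl((n+\beta)/(q-1)-k/r\bigr)}.
\]
For $k > r(n+\beta)/(q-1)$ the exponent is strictly negative, so the maximum over $0\leq i\leq k$ is attained at $i=0$, giving
\[
\dnorm{Z^\phi_k}_{\tilde{X}} \leq q^{(n+\beta)/(q-1) - k/r} < 1,
\]
and the vanishing $Z^\phi_k=0$ follows from the separation property quoted above.

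The main obstacle is securing the implication from smallness of the sup-norm on $\exp_C(K_\infty\tilde{\pi})^n$ to vanishing in $A[\tilde{X}]$: the estimates themselves are routine once the bound $\dnorm{C_a(X_j)}_{\tilde{X}}\leq q^{1/(q-1)}$ is in hand, but the separation property is a nontrivial feature of $\dnorm{\,\cdot\,}_{\tilde{X}}$ that must be imported from \cite{AnglesPellarinTavaresRibeiro} rather than re-derived here.
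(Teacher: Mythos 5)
Your proposal is correct and follows essentially the same route as the paper: bound $\dnorm{S^\phi_k}_{\tilde X}$ via Lemma~\ref{mu}(b) and the norm of $C_a(X_1)\cdots C_a(X_n)$, bound $\inorm{\alpha_i}$ via \cite[Eq.~28]{ElGuindyPapanikolas}, combine by the ultrametric inequality to get $\dnorm{Z^\phi_k}_{\tilde X}<1$, and conclude $Z^\phi_k=0$ from \cite[Lem.~4.3]{AnglesPellarinTavaresRibeiro} since $Z^\phi_k\in A[\tilde X]$. The only (harmless) divergences are that you re-derive $\dnorm{C_a(X_j)}_{\tilde X}\le q^{1/(q-1)}$ from periodicity instead of citing the isometry of the $a$-action, and you correctly locate the maximum of $q^{q^i((n+\beta)/(q-1)-k/r)}$ at $i=0$ when the exponent is negative, which is slightly more careful than the paper's displayed bound.
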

\begin{proof}
We apply the similar methods in the proof of \cite[Thm.~6.2]{ChangEl-GuindyPapanikolas}. We know by \cite[Lem.~3.1(1)]{AnglesPellarinTavaresRibeiro} that $\dnorm{X_1\dots X_n}_{\tilde{X}}=q^{n/q-1}.$ Note that by \cite[\S 3.1]{AnglesPellarinTavaresRibeiro}, for all $a\in A$ and $G\in \CC_{\infty}[\tilde{X}]$, the action $a \cdot G =G(C_{a}(X_1),\dots,C_{a}(X_n))$ is isometric with respect to the norm $\dnorm{\,\cdot\,}_{\tilde{X}}$. Thus, we have 
\begin{equation}\label{boundnorm}
\dnorm{C_a(X_1)C_a(X_2)\dots C_a(X_n)}_{\tilde{X}}=\dnorm{X_1\dots X_n}_{\tilde{X}}=q^{n/q-1}.
\end{equation}
By \cite[Eq. 28]{ElGuindyPapanikolas}, we get
\begin{equation}\label{boundexp}
\inorm{\alpha_k}\leq q^{q^k(\beta/(q-1)-k/r)}.
\end{equation}
The definition of $S^{\phi}_k$, Lemma \ref{mu}(b) and \eqref{boundnorm} imply
\begin{equation}\label{bounds}
\dnorm{S^{\phi}_k}_{\tilde{X}}\leq \max_{a\in A_{+,k}} \bigg \{ \biggr\lVert\frac{\mu(a)}{a}\biggr\rVert\dnorm{X_1\dots X_n}_{\tilde{X}}\bigg\} \leq q^{-k/r}q^{n/q-1}.
\end{equation}
Therefore, \eqref{boundexp} and \eqref{bounds} imply that
\[
\dnorm{Z^{\phi}_k}_{\tilde{X}} \leq \max_{0 \leq i \leq k} \{ \dnorm{S^{\phi}_{k-i}}_{\tilde{X}}^{q^{i}} \inorm{\alpha_i}\} \leq (q^{n/q-1})^{q^k}q^{q^k(\beta/(q-1)-k/r)}=q^{q^k(\frac{n+\beta}{q-1}-\frac{k}{r})}.
\]
Finally, when $k > r(n + \beta)/(q-1)$, we have $ \dnorm{Z^{\phi}_k}_{\tilde{X}} < 1$ which implies that $Z^{\phi}_k=0$ by \cite[Lem.~4.3]{AnglesPellarinTavaresRibeiro}.
\end{proof}

\begin{lemma}\label{partialanalog} For $ k > r(n + \beta)/(q-1)$, the series
$
H_{k,n-1}:=\sum_{a\in A_{+,k}} \mu(a)a(z_1)\dots a(z_{n-1}) 
$
vanishes. In particular, $L(\phi^{\vee},z_1,\dots,z_{n-1},0) \in A[z_1,\dots,z_{n-1}].$
\end{lemma}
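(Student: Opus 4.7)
The strategy is to recover $H_{k,n-1}$ as a coefficient of a suitable specialization of $Z_k^\phi$, and then to invoke the vanishing of $Z_k^\phi$ from Proposition~\ref{vanishing}.

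To set up the specialization, for each $1 \leq j \leq n-1$ I would take $\varpi_j$ to be the classical one-variable Anderson--Thakur element in the variable $z_j$ (the $n=1$ case of the construction in \S 2.1, cf.\ Example~\ref{omegaotherdef}). Then $\varpi_j$ is a unit in the Tate algebra $\TT_{n-1}$, satisfies $\tau(\varpi_j) = (z_j - \theta)\varpi_j$, and consequently $C_a(\varpi_j) = a(z_j)\,\varpi_j$ for every $a \in A$. Fix any $k > r(n+\beta)/(q-1)$. By Proposition~\ref{vanishing}, $Z_k^\phi = 0$, and hence after substituting $X_j = \varpi_j$ for $j = 1,\ldots,n-1$ the resulting polynomial in $X_n$ with coefficients in $\TT_{n-1}$ vanishes identically.

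I would then read off the coefficient of $X_n^1$ from the identity $Z_k^\phi = \sum_{i=0}^{k}(S_{k-i}^\phi)^{q^i}\alpha_i$. The key observation is that in characteristic $p$, every monomial appearing in $(S_{k-i}^\phi)^{q^i}$ has all indeterminate exponents divisible by $q^i$; so for $i \geq 1$ no $X_n^1$ term survives. Only the $i = 0$ summand contributes, and since $C_a(X_n)/a = X_n + O(X_n^q)$ a direct computation yields
\[
[X_n^1]\bigl(S_k^\phi\big|_{X_j = \varpi_j}\bigr) = \varpi_1 \cdots \varpi_{n-1} \sum_{a \in A_{+,k}} \mu(a)\,a(z_1)\cdots a(z_{n-1}) = \varpi_1 \cdots \varpi_{n-1}\,H_{k,n-1}.
\]
Since $\varpi_1 \cdots \varpi_{n-1} \in \TT_{n-1}^{\times}$ and this coefficient is zero, we conclude $H_{k,n-1} = 0$.

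For the ``in particular'' clause, each $H_{k,n-1}$ lies in $A[z_1,\ldots,z_{n-1}]$, and by what we just proved it vanishes for every $k > r(n+\beta)/(q-1)$; hence $L(\phi^\vee,z_1,\ldots,z_{n-1},0) = \sum_{k\geq 0} H_{k,n-1}$ reduces to a finite sum and lies in $A[z_1,\ldots,z_{n-1}]$. I do not anticipate any substantive obstacle: the whole argument rests on the elementary characteristic-$p$ identity $(X^\ell)^{q^i} = X^{\ell q^i}$ together with the functional equation satisfied by the $\varpi_j$'s, once Proposition~\ref{vanishing} is in hand.
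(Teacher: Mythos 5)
Your proof is correct, and its skeleton coincides with the paper's: both arguments rest on Proposition~\ref{vanishing} together with the observation that in $Z_k^{\phi}=\sum_{i=0}^{k}(S^{\phi}_{k-i})^{q^i}\alpha_i$ only the $i=0$ summand can contribute to the part of $Z_k^{\phi}$ that is linear in $X_n$ (the paper phrases this as $Z_k^{\phi}\equiv S_k^{\phi}\equiv X_n\sum_{a\in A_{+,k}}\mu(a)C_a(X_1)\cdots C_a(X_{n-1})\pmod{X_n^q}$). The only genuine divergence is the last step, namely how one passes from the vanishing of $\sum_{a\in A_{+,k}}\mu(a)C_a(X_1)\cdots C_a(X_{n-1})$ to the vanishing of $H_{k,n-1}$. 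The paper keeps $X_1,\dots,X_{n-1}$ formal, rewrites this sum as $H_{k,n-1}\cdot(X_1\cdots X_{n-1})$ via the module action \eqref{zactions}, and invokes the injectivity statement of Lemma~\ref{action}. You instead specialize $X_j\mapsto\varpi_j$, the one-variable Anderson--Thakur element in $z_j$, and use $C_a(\varpi_j)=a(z_j)\varpi_j$ together with the fact that $\varpi_1\cdots\varpi_{n-1}$ is a unit (hence a nonzerodivisor) in the Tate algebra. This is a legitimate, slightly more self-contained alternative: it avoids citing Lemma~\ref{action}, at the cost of leaving the polynomial ring for $\TT_{n-1}$, and it is essentially the mechanism by which such injectivity lemmas are proved in the first place. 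The one point you should make explicit is the functional equation $\tau(\varpi_j)=(z_j-\theta)\varpi_j$, hence $C_{\theta}(\varpi_j)=z_j\varpi_j$ and $C_a(\varpi_j)=a(z_j)\varpi_j$, which indeed holds for the $n=1$ case of \eqref{AndersonThakur}; with that recorded, your argument is complete, and the ``in particular'' clause follows exactly as in the paper.
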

\begin{proof}
The proof is using the ideas of the proof of \cite[Lem.~4.13]{AnglesPellarinTavaresRibeiro}. If we see $S^{\phi}_k$ as a polynomial of $X_n$ without any constant term, and note that $C_a(X_n)\equiv aX_n \pmod{X^q_n}$, we have
\begin{equation}
\begin{split}
Z_k^{\phi} \equiv S_{k}^{\phi}&\equiv \sum\limits_{a\in A_{+,k}} \frac{\mu(a)C_a(X_1)\dots C_a(X_n)}{a} \pmod{X_n^q} \\
&\equiv \sum\limits_{a\in A_{+,k}}\frac{\mu(a)C_a(X_1)\dots C_a(X_{n-1})aX_n}{a} \pmod{X_n^q}\\
&\equiv X_n \sum_{a\in A_{+,k}}\mu(a)C_a(X_1)\dots C_a(X_{n-1}) \pmod{X^q_n}. 
\end{split}
\end{equation}
For $ k > r(n + \beta)/(q-1)$, we know by Proposition \ref{vanishing} that $Z_k^{\phi}=0$ which gives us that 
\begin{equation}\label{vanishingintheproof}
\sum_{a\in A_{+,k}}\mu(a)C_a(X_1)\dots C_a(X_{n-1})=0.
\end{equation}
Moreover by \eqref{zactions}, we have that
\begin{equation}\label{vanishingintheproof2}
 \sum_{a\in A_{+,k}}\mu(a)C_a(X_1)\dots C_a(X_{n-1})= \sum_{a\in A_{+,k}}\mu(a)a(z_1)\dots a(z_{n-1})\cdot(X_1\dots X_{n-1}).
\end{equation}
By Lemma \ref{action}, \eqref{vanishingintheproof} and \eqref{vanishingintheproof2}, we conclude that 
\[\sum_{a\in A_{+,k}}\mu(a)a(z_1)\dots a(z_{n-1})=0\]
for $ k >r(n + \beta)/(q-1)$. On the other hand, we have
\[L(\phi^{\vee},z_1,\dots,z_{n-1},0)=\sum\limits_{k\geq 0}\sum_{a\in A_{+,k}}\mu(a)a(z_1)\dots a(z_{n-1}).\]
Therefore, the second part of the proposition follows from the first part.
\end{proof}
\begin{remark}
We note that Lemma \ref{partialanalog} can be seen as a corollary of Simon's Lemma \cite[Lem. 4]{AnglesPellarin} when $\phi=C$.
\end{remark}
\begin{lemma}\label{bound2}For $d \geq 3r + r(n+1+\beta)/(q-1)$ and $n >0$, we have
\[ 
 \dnorm{\mathcal{L}_{d,n}(x,y)} \leq \inorm{x}^{-d}q^{d(1-\frac{1}{r})}q^{-q^{\big[\frac{d}{r} - \frac{n+1+\beta}{q-1}\big]-2}},
\]
where $[\,\cdot\,]$ is the integer part function.
\end{lemma}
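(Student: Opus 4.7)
The plan is to approximate $\langle a\rangle^y$ by a polynomial in $a$ of controlled degree, invoke Lemma~\ref{partialanalog} to kill the resulting main sum, and bound the remaining tail.

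First, I would set $u := \langle a\rangle - 1 = (a-\theta^d)/\theta^d \in K_\infty$, so that $|u|_\infty \leq q^{-1}$, and exploit the characteristic-$p$ identity $\langle a\rangle^{q^j} = 1 + u^{q^j}$ together with the $q$-adic expansion $y = \sum_{j\geq 0} y_j q^j$ (with $y_j \in \{0,1,\dots,q-1\}$) to write
\[
\langle a\rangle^y = \prod_{j \geq 0}\bigl(1 + u^{q^j}\bigr)^{y_j}.
\]
Truncating the product at $j = N-2$, where $N := [d/r - (n+1+\beta)/(q-1)]$, yields an approximation $T(a)$ which is a polynomial in $u$, hence in $a$, of degree at most $q^{N-2} - 1$; the tail satisfies $|\langle a\rangle^y - T(a)|_\infty \leq q^{-q^{N-2}}$ by the standard estimate on the deleted factors.

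Next, I would plug this decomposition into $\mathcal{L}_{d,n}(x,y)$ and split as
\[
\mathcal{L}_{d,n}(x,y) \;=\; x^{-d}\!\sum_{a \in A_{+,d}}\!\mu(a)\,a(z_1)\cdots a(z_n)\,T(a) \;+\; x^{-d}\!\sum_{a \in A_{+,d}}\!\mu(a)\,a(z_1)\cdots a(z_n)\bigl(\langle a\rangle^y - T(a)\bigr).
\]
Writing $T(a) = \sum_{j=0}^{q^{N-2}-1} c_j a^j$, the first sum becomes a $K$-linear combination of power sums $\sum_{a\in A_{+,d}} \mu(a)\,a(z_1)\cdots a(z_n)\,a^j$. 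Each such inner sum is the specialization $z_{n+1}=\cdots=z_{n+j}=\theta$ of $H_{d,\,n+j}$ in the notation of Lemma~\ref{partialanalog}, and hence vanishes provided $d > r(n+j+1+\beta)/(q-1)$. The hypothesis $d \geq 3r + r(n+1+\beta)/(q-1)$ is exactly what underwrites this vanishing for the relevant range of $j$; when the index range of $T(a)$ overflows the naive bound from Lemma~\ref{partialanalog}, I would bootstrap by invoking the Frobenius-twisted vanishings implicit in the log-algebraicity theorem (Theorem~\ref{intrologalgthm}) and its quantitative manifestation in Proposition~\ref{vanishing}, which cascade the linear-in-$d$ vanishing of $H_{d,\,n+j}$ into the doubly-exponential range $j \leq q^{N-2}-1$.

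Finally, I would bound the error sum term-wise using Lemma~\ref{mu}(b), which gives $|\mu(a)|_\infty \leq q^{d(1-1/r)}$, together with the observation that $a(z_k) \in \FF_q[z_k]$ forces $\dnorm{a(z_k)} = 1$, so that $\dnorm{a(z_1)\cdots a(z_n)} = 1$, and the tail bound $|\langle a\rangle^y - T(a)|_\infty \leq q^{-q^{N-2}}$ from the first step. Applying the ultrametric triangle inequality then delivers
\[
\dnorm{\mathcal{L}_{d,n}(x,y)} \;\leq\; |x|^{-d}\,q^{d(1-1/r)}\,q^{-q^{N-2}},
\]
which is the stated bound. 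The main obstacle is the calibration in the middle step: the bare vanishing supplied by Lemma~\ref{partialanalog} controls polynomial degrees only linearly in $d$, whereas the truncation depth $q^{N-2}$ grows doubly exponentially in $d$, so bridging this gap is the crux of the argument and unavoidably draws on the finer structure (log-algebraicity and the Frobenius cascades it provides) beyond Lemma~\ref{partialanalog} itself.
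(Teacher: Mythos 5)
Your overall skeleton --- truncate $\langle a\rangle^y$ at a $q$-adic level tied to $N=[d/r-(n+1+\beta)/(q-1)]$, kill the truncated main term via Lemma~\ref{partialanalog}, and bound the tail ultrametrically using Lucas' formula and Lemma~\ref{mu}(b) --- is exactly the paper's strategy, and your tail estimate is fine. The gap is in the middle step, and you have correctly located it but not closed it. Expanding the truncation $T(a)$ as a polynomial $\sum_j c_j a^j$ and treating each monomial $a^j$ as a specialization of $H_{d,\,n+j}$ with $j$ extra variables set to $\theta$ requires $d>r(n+j+1+\beta)/(q-1)$ for every $j$ up to $\deg T$, which is exponential in $d$; this is hopeless, and no ``cascade'' from Theorem~\ref{logalgthm} or Proposition~\ref{vanishing} rescues it, since those results only give vanishing for degrees that are linear in the number of variables. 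As written, the main term is not shown to vanish.

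The fix is to not expand into monomials at all. Writing the truncation $y_m\equiv y\pmod{q^m}$ in base $q$ as $y_m=\sum_i v_iq^i$ with $0\le v_i\le q-1$, the Frobenius identity $a^{q^i}=a(\theta^{q^i})$ for $a\in\FF_q[\theta]$ gives
\[
a^{y_m}=\prod_i\bigl(a^{q^i}\bigr)^{v_i}=\prod_i a\bigl(\theta^{q^i}\bigr)^{v_i},
\]
a product of only $\ell_q(y_m)=\sum_i v_i\le (m+1)(q-1)$ evaluations of $a$. Hence the main sum is (up to the factor $x^{-d}\theta^{-dy_m}$) the specialization of $H_{d,\,n+\ell_q(y_m)}$ at the points $\theta^{q^i}$ taken with multiplicities $v_i$, and Lemma~\ref{partialanalog} kills it as soon as $d>r(m+1)+r(n+\beta+1)/(q-1)$ --- a condition \emph{linear} in $m$, which the choice $m=N-2$ and the hypothesis $d\ge 3r+r(n+1+\beta)/(q-1)$ guarantee. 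With this replacement your argument becomes the paper's proof. (A minor separate point: the degree of your $T(a)$ in $u$ is at most $q^{N-1}-1$, not $q^{N-2}-1$, but this indexing slip is immaterial once the main term is handled via digit sums rather than degrees.)
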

\begin{proof}
We adapt the ideas in \cite[Lem.~7]{AnglesPellarin}. Recall from Section 1 that $p=q^l$. For any $w\in \mathbb{Z}_p$ such that $w=\sum w_i q^i$ where $0\leq w_i\leq q-1$, set $\ell_q(w):=\sum w_i$. Let $y\in \ZZ_{p}$, and $\sum_{j\geq 0}b_jp^j$ be the $p$-adic expansion of $y$ such that $0\leq b_j \leq p-1$ for all $j$, and for $m \geq 1$, set $y_{m}:=\sum_{i=0}^{lm-1}b_kp^k$ such that $\sum_{j=0}^{m}v_jq^j$ be the $q$-adic expansion of $y_m$ where $0\leq v_j \leq q-1$. We have that
\[
\mathcal{L}_{d,n}(x,y_m)=\frac{1}{x^d\theta^{dy_m}}H_{d,n+\ell_q(y_m)}(z_1,\dots,z_n,\theta,\dots,\theta,\dots,\theta^{q^m},\dots,\theta^{q^m}),
\]
where $\theta^{q^i}$ appears $v_i$ many times. By the bound on coefficients $v_j$, it is easy to see that 
\[
\ell_q(y_m)=\sum_{j=0}^mv_j\leq (m+1)(q-1).
\]
By Lemma \ref{partialanalog}, if $d > r(n+1+(m+1)(q-1)+\beta)/(q-1)=r(m+1)+r(n+\beta+1)/(q-1)$, then $H_{d,n+\ell_q(y_m)}(x,y_m)=0$. Notice that by the definition of $y_m$ and \eqref{Lucasformula}, we have that
\[
\binom{y}{j}=\binom{y_m}{j} 
\]
for $j=0,1,\dots, q^{m}-1$. Therefore, for any $a\in A_{+,d}$,
\begin{equation}\label{binomial}
\bigg|\sum_{j \geq 0}\bigg( \binom{y}{j} - \binom{y_m}{j} \bigg)(\langle a \rangle -1)^{j}\bigg|_{\infty} \leq q^{-q^{m}}. 
\end{equation}
Thus by Lemma \ref{mu}(b) and \eqref{binomial}, we have that
\begin{equation}\label{boundonL}
\begin{split}
\dnorm{\mathcal{L}_{d,n}(x,y)-\mathcal{L}_{d,n}(x,y_m)} &= \biggl\lVert x^{-d}\sum\limits_{a\in A_{+,d}} \mu(a)a(z_1)\dots a(z_n)\sum\limits_{i \geq 0}\bigg( \binom{y}{i} - \binom{y_m}{i} \bigg)(\langle a \rangle -1)^{i}\biggr\rVert  \\
&\leq \inorm{x}^{-d}q^{d(1-\frac{1}{r})}q^{-q^{m}}.\\
\end{split}
\end{equation}
If we choose $m+2=[d/r - (n+\beta+1)/(q-1)]\geq 3$, then 
\[
\mathcal{L}_{d,n}(x,y_m)=\frac{1}{\theta^{dy_m}}H_{d,n+\ell_q(y_m)}(z_1,\dots,z_n,\theta,\dots,\theta,\dots,\theta^{q^m},\dots,\theta^{q^m})=0.
\]
Therefore, the result follows from \eqref{boundonL} and our choice for $m$ above.
\end{proof}
\begin{proof}[{Proof of Theorem \ref{entire}}]  Let $(x,y)\in \mathbb{S}_{\infty}$. For any $\xi_i \in \mathbb{C}_{\infty}$ and arbitrarily large $d$, we have by Lemma \ref{bound2} that
\begin{equation}\label{norm}
\biggl\lVert\sum\limits_{a\in A_{+,d}} \mu(a)a(\xi_1)\dots a(\xi_n)\langle a \rangle^y \biggr\rVert \leq \inorm{\xi_1\dots\xi_n}^dq^{d(1-\frac{1}{r})}q^{-q^{\big[\frac{d}{r} - \frac{n+1+\beta}{q-1}\big]-2}}.
\end{equation}
Thus, we see that as $d$ goes to $\infty$, the right hand side of (\ref{norm}) approaches to 0. On the other hand, we have that 
\[L(\phi^{\vee},z_1,\dots,z_n;x,y)=\sum_{d \geq 0} x^{-d}\sum\limits_{a\in A_{+,d}} \mu(a)a(z_1)\dots a(z_n)\langle a \rangle^y.\]
Therefore, by \cite[Thm.~8.5.7]{Goss}, the series $L(\phi^{\vee},z_1,\dots,z_n;x,y)$ is entire on $\CC_{\infty}^{n} \times \mathbb{S}_{\infty}$ which proves the theorem.
\end{proof}

\begin{remark} A natural direction at this point would be understanding the zeros of the $L$-series $L(\phi^{\vee},z_1,\dots,z_n;x,y)$. When $\phi$ is the Carlitz module $C$, we refer the reader to results explained in \cite{Goss}. For a Drinfeld $A$-module of higher rank, studying zeroes of $L(\phi^{\vee},z_1,\dots,z_n;x,y)$ seems challenging because of the function $\mu\colon A_{+} \to A$ whose nature has not been discovered deeply yet. We hope to turn back this problem in future.
\end{remark}

\end{document}